\newtheorem{theorem}{Theorem}[section]
\newtheorem{introtheorem}{Theorem}
\newtheorem{corollary}[theorem]{Corollary}
\newtheorem{lemma}[theorem]{Lemma}
\newtheorem{definition}[theorem]{Definition}
\newtheorem{proposition}[theorem]{Proposition}
\def\C{\mathbb{C}}
\def\R{\mathbb{R}}
\def\O{\mathbb{O}}
\def\P{\mathbb{P}}
\def\Q{\mathbb{Q}}
\def\N{\mathbb{N}}
\def\L{\mathbb{L}}
\def\Z{\mathbb{Z}}
\def\CP2{{\mathbb{CP}^2}}
\def\ratdc{{\operatorname{Rat}^d_\C}}
\def\ratd{{\operatorname{Rat}^d}}
\def\rat{{\operatorname{Rat}}}
\def\cA{{\mathcal{A}}}
\def\cB{{\mathcal{B}}}
\def\cC{{\mathcal{C}}}
\def\cH{{\mathcal{H}}}
\def\cO{{\mathcal{O}}}
\def\z{\zeta}
\def\vphi{\varphi}
\def\laurent{\C((t))}
\def\laurentm{\C((t^{1/m}))}
\def\puiseux{\C \langle\langle t \rangle\rangle }
\def\ponel{\P^1_\L}
\def\ponec{\P^1_\C}
\def\pone{\P^1}
\def\juliavphi{J(\vphi)}
\def\pberl{\P^{1,an}_{\L}}
\def\seqMn{{\{M_n\}}}
\def\seqMjn{{\{M_{j,n}\}}}
\def\seqMnprime{{\{M'_n\}}}
\def\seqLn{{\{L_n\}}}
\def\seqLnprime{{\{L'_n\}}}
\def\seqfn{{\{f_n\}}}
\def\moebiusc{{\operatorname{PSL}(2,\C)}}
\def\vecz{{\vec{z}}}
\def\vecw{{\vec{w}}}
\newcommand{\mish}[1]{\ensuremath{\{ #1_t} \} }
\newcommand{\seq}[1]{\ensuremath{\{ {#1}_n} \} }
\newcommand{\mishl}[1]{\ensuremath{ \boldsymbol{#1}}  }
\newcommand{\tang}[1]{\ensuremath{T_{#1} {\pberl}}}
\newcommand{\cball}[2]{\ensuremath{B_{\le #1} (#2)}}
\newcommand{\oball}[2]{\ensuremath{B_{< #1} (#2)}}
\begin{document}

\title{Rescaling limits of complex rational maps}
\author{Jan Kiwi}
\address{Facultad de Matem\'aticas,
Pontificia Universidad Cat\'olica de
Chile.}
\thanks{Supported by ``Proyecto Fondecyt \#1110448''}
\email{jkiwi@uc.cl}
\date{\today}
\keywords{Puiseux series, rescaling limits, Julia sets}
\subjclass[2010]{Primary: 37F45; Secondary: 12J25, 32S99}
\begin{abstract}
  We discuss rescaling limits for sequences of complex rational maps in one variable  which approach infinity in parameter space.
  It is shown that  any given sequence of maps of degree $d \ge 2$ has at most $2d-2$ dynamically distinct rescaling limits which are not postcritically finite.
For quadratic rational maps, a complete description of the possible rescaling limits is given. 
These results are obtained employing tools from non-Archimedean dynamics.
\end{abstract}

\maketitle


\section{Introduction}

The parameter space  $\ratdc$ of  degree $d$ 
rational maps in one complex  variable  
is naturally identified with a Zariski open subset of $\P^{2d+1}_\C$.
That is, the complement of $\ratdc$ in $\P^{2d+1}_\C$ is a codimension one algebraic variety which coincides with $\partial \ratdc$.
When rational maps are viewed as dynamical systems acting on the Riemann sphere $\ponec$, it is of interest
to understand what happens to the dynamics as they approach $\partial \ratdc$. 
The aim of this paper is to study ``rescaling limits'' which are non-trivial dynamical systems that arise in this context.
We establish, in a sense to be precised later,
 that most rescaling limits are postcritically finite rational maps.
Moreover, we give a complete characterization of the rescaling limits which occur for quadratic rational maps\footnote{The author learned of the question: ``How many dynamically distinct limits can a sequence of quadratic rational maps have?'' from a conversation with Laura De Marco and Adam L. Epstein in 2005.}.
These results arise after applying tools from non-Archimedean dynamics.

Maybe not in this language, ``rescaling limits'' already appear in the literature.
Specially for $d=2$, since quadratic rational maps have been studied in greater detail.
More precisely, rescaling limits appear in Stimson's Ph.D. Thesis~\cite{Stimson}  to describe the
asymptotic behavior of certain algebraic curves in quadratic moduli space,
in Epstein's proof that hyperbolic components of certain type are precompact in quadratic moduli space~\cite{EpsteinBounded} and, in
De Marco's description of a  compactification of quadratic moduli space where the iteration map extends continuously~\cite{DeMarcoQuadratic}. 
In Section~\ref{sec:examples}, we
also show how rescaling limits naturally arise in other parameter spaces: cubic polynomials, Latt\`es maps, and McMullen's Cantor cross circle Julia sets.

Although our results apply to sequences $\{ f_n \}$ of rational maps approaching $\partial \ratdc$,
non-Archimedean dynamics emerges  once degenerate holomorphic 
families $\{ f_t \} \subset \P^{2d+1}_\C$ are taken into account.
Here $\{ f_t \}$ is parametrized by a neighborhood of the origin in $\C$ and
$f_t \in \partial \ratdc$ if and only if $t=0$. Each such family may be regarded as a rational map $\mishl{f}$ with coefficients
in the field of formal Laurent series $\laurent$. However, we prefer to regard the coefficients as elements of
a complete and algebraically closed field containing $\laurent$ which we denote by $\L$. Then, following ideas nowadays standard, the action of
$\mishl{f} : \ponel \to \ponel$  becomes easier to understand when extended to the Berkovich projective line $\pberl$ over $\L$. 
We will explain how the ``rescalings'' of  $\{ f_t \}$ correspond to some of the periodic points in the Julia set of $\mishl{f} : \pberl \to \pberl$.
Exploiting some basic properties of dynamics on the Berkovich projective line we will obtain our results regarding the rescaling limits of holomorphic families 
which, after some work, are translated to results regarding  rescaling limits of sequences of rational maps.

Now we procede to give precise statements of  our results, first for holomorphic families and then for sequences of rational maps.

\subsection*{Rational maps.}
\label{sec:statements}
As mentioned above, the space of complex rational maps  of degree $d \ge 1$, denoted $\ratdc$,
is identified to a Zariski open subset of $\P_{\C}^{2d+1}$ via the inclusion:
$$
\begin{array}{ccc}
\ratdc & \hookrightarrow & \P_{\C}^{2d+1} \\
 \dfrac{a_0z^d+\cdots+a_d}{b_0z^d+ \cdots +b_d}& \mapsto & [a_0:\dots:a_d:b_0:\dots:b_d].
\end{array}$$
Hence $\ratdc$ is identified with the complement of the hypersurface in $\P_{\C}^{2d+1}$ obtained as the vanishing locus
of the resultant of the polynomials ${a_0z^d+\cdots+a_d}$ and ${b_0z^d+ \cdots +b_d}$.

\subsection*{Holomorphic families}
Given a neighborhood $U$ of the origin in $\C$, 
a collection $\mish{f}_{t \in U} \subset \P_{\C}^{2d+1} $ is 
a {\sf one--dimensional holomorphic family of degree $d \geq 1$} 
if 
$$
\begin{array}{ccl}
  U & \to & \P_{\C}^{2d+1}\\
          t      & \mapsto & f_t
\end{array}
$$
is a holomorphic map such that $f_t \in \rat_{\C}^d $  for all $t \neq 0$.
We say that  $\mish{f}_{t \in U}$ is a {\sf degenerate} holomorphic family if 
 $f_0 \notin \rat_{\C}^d $.
A holomorphic family $\mish{M}_{t \in U}$ of degree $1$ will be called, following Shishikura, a {\sf moving frame}.

Holomorphic families of rational maps appear in the literature in a more general setting which allows parametrizations by  complex manifold of any dimension and any 
topology. In this paper we will only consider the very special kind of holomorphic families parametrized by neighborhoods of the origin in $\C$ and proceed to simply call
them ``holomorphic families''.
Also, we drop the subscript ``$t \in U$''
 from $\mish{f}_{t \in U}$, since
 all of our discussion about holomorphic families only depends
on the corresponding germs at $t=0$ and not on the domain $U$.
However, for the sake of simplicity of exposition we 
prefer to work with holomorphic families instead of with germs.

\begin{definition}
  \label{def:5}
  Let $\mish{f}$ be a holomorphic family of degree $\ge 2$. We say that a moving frame $\mish{M}$ is a {\sf rescaling for $\mish{f}$}, if there exists  an integer $q \ge  1$, a degree $d' \geq 2$ rational map $g: \ponec \to \ponec$ and a finite subset $S$ of $\ponec$ such that
  \begin{equation}
    \label{eq:4}
    M_t^{-1} \circ f_t^q \circ M_t (z) \to g (z), \, \mbox{ as } t \to 0,
  \end{equation}
uniformly on compact subsets of $\ponec \setminus S$.
We say that $g$ is a {\sf rescaling limit for $\mish{f}$ in  $\ponec \setminus S$}.
The minimal $q \ge 1$ such that the above holds is called the {\sf period of the rescaling $\mish{M}$}.
\end{definition}

We will show  that any $q$ for which~\eqref{eq:4} holds for some $g$ of degree at least $2$ is a multiple of the period of $\mish{M}$ (see Corollary~\ref{cor:3}).

\begin{definition}
  \label{def:4}
  We say that two moving frames $\mish{M}$ and $\mish{L}$ are {\sf equivalent} if there exists $M \in \rat^1_\C$ such that  
$M^{-1}_t \circ L_t \to M$
as $t \to 0$.
The equivalence class of $\mish{M}$ will be denoted by $[\mish{M}]$ and
the set formed by the  equivalence classes of moving frames by $\cB$.
\end{definition}

It is not difficult to check that the above relation among moving frames is an equivalence relation.
Also, if the moving frame $\mish{M}$ is a rescaling for $\mish{f}$ with rescaling limit $g$ and $\mish{L}$ is equivalent to $\mish{M}$, then
$\mish{L}$ is a rescaling for  $\mish{f}$ with rescaling limit M\"obius conjugate to $g$.

In Section~\ref{sec:equiv-moving-fram} we will translate results from non-Archimedean dynamics to show that any holomorphic family $\mish{f}$ acts 
on the set formed by the equivalence classes of moving frames $\cB$. 
More precisely, given an equivalence class of moving frames $[\mish{M}]$
there exists a unique class of  moving frames $[\mish{L}]$
such that: 
\begin{equation*}
  L_t^{-1} \circ f_t \circ M_t (z) \to g(z), \, \mbox{ as } t \to 0,
\end{equation*}
for some non-constant rational map $g: \ponec \to \ponec$ and
for all $z$ outside some finite set. 
It follows that $\mish{M}$ is a rescaling if and only if $\mish{L}$ is a rescaling.
Thus it is reasonable to say that two rescalings $\mish{M}$ and $\mish{L}$ 
for $\mish{f}$ are {\sf dynamically independent} 
only when the corresponding classes 
$[\mish{M}]$ and $[\mish{L}]$ lie 
in distinct $\mish{f}$-orbits.

\begin{introtheorem}
  \label{ithr:3}
 Let $\mish{f}$ be a holomorphic family of degree $d \ge 2$ rational maps. 
Then there are at most $2d -2$ pairwise dynamically independent 
rescalings for $\mish{f}$ such that the corresponding rescaling limits are not postcritically finite.

 Moreover, if $d=2$, then there are at most two dynamically independent 
rescalings. Furthermore, in the case that a rescaling of period at least $2$ exists, then exactly one of the following holds:
 \begin{enumerate}
 \item  $\mish{f}$ has exactly two dynamically independent rescalings, 
of periods $q' > q > 1$. The period $q$ rescaling limit is  a quadratic rational map with a multiple fixed point and a prefixed critical point. The period $q'$ rescaling limit is a quadratic polynomial, modulo conjugacy.
\item  $\mish{f}$ has a rescaling whose corresponding limit is 
a quadratic rational map with a multiple fixed point and every other rescaling
is dynamically dependent to it.
 \end{enumerate}
\end{introtheorem}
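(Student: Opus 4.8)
The strategy is to pass to the non-Archimedean picture. Regard the family $\mish{f}$ as a rational map $\mishl{f}$ of degree $d$ over $\L$, and extend its action to the Berkovich projective line $\pberl$. The first step is to establish a dictionary: equivalence classes of moving frames $[\mish{M}]$ correspond to points of $\pberl \setminus \P^1(\L)$ (equivalently, to $\L$-rational discs up to scaling), with the $\mish{f}$-action on $\cB$ matching the action of $\mishl{f}$ on $\pberl$; and $\mish{M}$ is a rescaling of period $q$ precisely when the corresponding point $x$ is periodic of period $q$ under $\mishl{f}$ with $x$ lying in the Julia set (so that the local degree of $\mishl{f}^q$ at $x$, in the sense of the induced map on the space of directions, is at least $2$). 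Then a rescaling limit $g$ of period $q$ is, up to M\"obius conjugacy, the \emph{reduction} of $\mishl{f}^q$ at the periodic point $x$ — a rational map over the residue field $\C$ — and ``not postcritically finite'' translates to a nondegeneracy condition on this reduction. All of this machinery is meant to be developed in the sections the excerpt refers to (Sections on equivalence of moving frames and on non-Archimedean dynamics), so I would cite it.

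For the bound $2d-2$, the key point is a Riemann–Hurwitz / conservation-of-``non-repelling-or-branching'' argument on the tree $\hl$ (the convex hull of $\P^1(\L)$ inside $\pberl$). A Julia periodic point $x$ whose reduction is a rational map of degree $\ge 2$ forces the directed local behavior of $\mishl{f}$ to have a ``critical'' contribution: following Rivera-Letelier's theory, each such cycle absorbs at least one of the $2d-2$ critical points of $\mishl{f}$ (counted with multiplicity) in its attracting basin inside $\pberl$, or contributes branching that is likewise charged against the critical measure. Two dynamically independent rescalings give disjoint $\mishl{f}$-cycles in the Julia set, whose basins are disjoint, so the $2d-2$ critical points are partitioned among them; since a reduction of degree $d' \ge 2$ costs at least $d'-1 \ge 1$, and a non-postcritically-finite reduction in particular has degree $\ge 2$, there can be at most $2d-2$ such cycles. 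Here I expect the delicate bookkeeping — matching the Berkovich-local ramification of $\mishl{f}^q$ at $x$ with the degree and critical points of the reduction $g$, and making sure postcritically finite reductions are excluded from the count correctly — to be the main obstacle.

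For $d = 2$ there are only $2d-2 = 2$ critical points, so at most two dynamically independent rescalings; the content is the trichotomy when a rescaling of period $\ge 2$ exists. A period-$q$ cycle in the Julia set with $q \ge 2$ is non-classical and its basin must contain at least one critical point of $\mishl{f}^{q}$; tracing which of the two critical points of $\mishl{f}$ (and their $\mishl{f}$-orbits) lands there, one does a case analysis. If both critical orbits feed the same cycle, its reduction $g$ is a degree $\ge 2$ map receiving both critical points: a quadratic rational map, and one shows the periodic Berkovich geometry forces a multiple fixed point for $g$; every other rescaling would then have to be a classical (Archimedean, non-degenerate) one dynamically dependent to this, giving case (2). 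If the two critical orbits feed two distinct cycles, each reduction is forced to be quadratic with a single critical point mapped into it; the periods are distinct (they cannot be conjugate cycles or one would not be dynamically independent), say $q' > q > 1$, and the finer geometry — one cycle carrying a critical point that is eventually fixed, the other carrying a critical point whose orbit is ``polynomial-like'' at infinity — yields that the period-$q$ limit is a quadratic rational map with a multiple fixed point and a prefixed critical point, while the period-$q'$ limit is a quadratic polynomial up to conjugacy, which is case (1). The hard part throughout is extracting these precise normal forms of the reductions from the combinatorics of the Berkovich dynamics, for which I would lean on the explicit description of quadratic maps over $\L$ and their reduction types worked out earlier in the paper.
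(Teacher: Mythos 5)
Your high-level plan matches the paper's strategy: pass to $\mishl{f}\colon \pberl \to \pberl$, identify rescalings with type II repelling periodic orbits via the Gauss point and moving-frame correspondence, and bound these cycles. However, the step you flag as ``the main obstacle'' --- charging each relevant cycle against the $2d-2$ critical points while ``making sure postcritically finite reductions are excluded from the count correctly'' --- is precisely where your sketch has a genuine gap, and the paper's resolution is not the one you gesture at. You write that ``a reduction of degree $d' \ge 2$ costs at least $d'-1 \ge 1$''; if that were the mechanism, it would bound the number of type~II cycles with degree $\ge 2$ tangent map by $2d-2$, which is false: the paper's own Latt\`es example (Section~2, and likewise the McMullen Cantor-cross-circle example) exhibits a single family with \emph{infinitely many} dynamically independent type~II repelling cycles, all with monomial (degree $\ge 2$ but PCF) tangent maps. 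So ``degree $\ge 2$'' does not absorb a type~I critical point.

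The actual key lemma the paper proves (its Theorem~\ref{FinitelyManyNonPCFTheorem}, with Lemma~\ref{CriticalValueLemma} as the workhorse) is strictly stronger and phrased in the contrapositive: if the basin of a type~II periodic orbit $\cO$ contains no \emph{rigid} (type~I) critical point, then every bad direction of $\vphi^q$ at $x\in\cO$ has finite forward orbit under $T_x\vphi^q$, and if $\deg_x\vphi^q \ge 2$ then $T_x\vphi^q$ is in fact postcritically finite. The proof pushes a critical orbit element forward through an infinite sequence of good/bad directions (Lemma~\ref{CriticalValueLemma} produces a rigid critical value in the image of any non-injective direction), showing a rigid critical point would have to converge to $\cO$. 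It is the non-PCF hypothesis on the tangent map, not the degree-$\ge 2$ hypothesis, that forces a type~I critical point into the basin; PCF tangent maps escape this conclusion, which is exactly what lets the Latt\`es family have infinitely many rescalings while still only $\le 2d-2$ non-PCF ones. Your proposal would need to be repaired to use this dichotomy explicitly. For the $d=2$ trichotomy, the paper simply cites Theorem~\ref{thr:periodic} (from the author's companion paper on quadratic Puiseux dynamics) and reads off cases~(3) and~(4); your case analysis points in the right direction but would need that classification as input rather than being re-derived from scratch.
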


A degree $2$ holomorphic family with a period $1$ rescaling limit is, modulo conjugacy by a moving frame, a non-degenerate holomorphic family and lacks of non-trivial rescalings (e.g., see Proposition~\ref{pro:2}). 

\subsection*{Sequential rescalings}
\label{sec:sequ-resc}
In many situations, the study of parameter spaces  leads us to  consider sequences of rational maps. Thus, it is of interest to explore the sequential analogues of the results previously described for holomorphic families. 

We say that a sequence $\seq{M}$ of M\"oebius transformations is a {\sf sequence of frames}. That is, $M_n \in \moebiusc \equiv \rat^1_\C$.

\begin{definition}
  \label{def:1}
  Consider a sequence $\{ f_n \}$ in $\ratdc$. We say that a sequence of frames $\{ M_n \}$ is a {\sf rescaling for $\seqfn$} 
if there exists an integer $q \ge  1$, a degree $d' \geq 2$ rational map $g: \ponec \to \ponec$ and a finite subset $S$ of $\ponec$ such that
\begin{equation}
  \label{eq:5}
  M_n^{-1} \circ f_n^q \circ M_n (z) \to g (z), \, \mbox{ as } n \to \infty,
\end{equation}
uniformly on compact subsets of $\ponec \setminus S$.
We say that $g$ is a {\sf rescaling limit for $\seqfn$}.
The minimal $q \ge 1$ such that the above holds is called the {\sf period of the rescaling $\seqMn$}.
\end{definition}

We will show that any $q$ for which~\eqref{eq:5} holds for some $g$ of degree at least $2$ is a multiple of the period of $\seq{M}$ (see Section~\ref{sec:period-rescaling}).

\begin{definition}
  \label{def:2}
  Two sequences of frames $\seqMn$ and $\seqLn$  are said to be {\sf independent} if  $M_n^{-1} \circ L_n \to \infty$ in $\moebiusc$. That is, for every compact set $K$ in
$\moebiusc$ there exists $n_0$ such $M_n^{-1} \circ L_n \notin K$ for all $n \ge n_0$.

Two rescaling sequences $\seqMn$ and $\seqLn$  are said to be {\sf equivalent} if  $M_n^{-1} \circ L_n \to M \in \moebiusc$.
\end{definition}

Note that equivalent sequences of frames are dependent (i.e. not independent).
However, if $\seqMn$ and $\seqLn$ are two dependent sequences of frames, then we may only conclude that there exist subsequences $\{ M_{n_k} \}$ and $\{ L_{n_k} \}$ which are equivalent.
Thus, the sequential analogue of dynamical independence  will require
passing to subsequences.

\begin{definition}
  \label{def:3}
  Given a sequence $\{ f_n \}$ in $\ratdc$ and rescalings $\seqMn$ and $\seqLn$ for $\seqfn$ of period dividing $q$. 
We say that  {\sf $\seqMn$ and $\seqLn$ are dynamically dependent} if, for some subsequences $\{ M_{n_k} \}$ and $\{ L_{n_k} \}$,  there exist $1 \leq m \leq q$, finite subsets $S_1$, $S_2$
of $\ponec$ and rational maps $g_1, g_2 : \ponec \to \ponec$ of degree at least $1$ 
 such that 
$$L^{-1}_{n_k} \circ f_{n_k}^m \circ M_{n_k} (z) \to g_1 (z)$$
uniformly on compact subsets of $\ponec \setminus S_1$ and
$$M^{-1}_{n_k} \circ f_{n_k}^{q-m} \circ L_{n_k} (z) \to g_2 (z)$$
uniformly on compact subsets of $\ponec \setminus S_2$.
\end{definition}

With the above definitions we will establish the sequential version of
Theorem~\ref{ithr:3}:

\begin{introtheorem}
\label{ithr:2}
  Let $d \geq 2$ and consider $\seqfn \subset \ratdc$. Then there exists at most 
$2d-2$ pairwise dynamically independent rescalings whose rescaling limits are not postcritically finite.

Moreover, if $d=2$, then there are at most two dynamically independent rescaling limits of period at least $2$.
Furthermore, in the case that a rescaling of period at least $2$ exists, then exactly one of the following holds:
 \begin{enumerate}
 \item  $\seq{f}$ has exactly two dynamically independent rescalings, 
of periods $q' > q > 1$. The period $q$ rescaling limit is  a quadratic rational map with a multiple fixed point and a prefixed critical point. The period $q'$ rescaling limit is a quadratic polynomial, modulo conjugacy.
\item  $\seq{f}$ has a rescaling whose corresponding limit is 
a quadratic rational map with a multiple fixed point and every other rescaling
is dynamically dependent to it.
 \end{enumerate}
\end{introtheorem}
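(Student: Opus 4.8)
The plan is to deduce Theorem~\ref{ithr:2} from its holomorphic-family counterpart, Theorem~\ref{ithr:3}. The idea is that, after passing to a subsequence, the sequence $\{f_n\}$ together with all of its rescaling data is faithfully encoded by a single rational map $\mishl{f}$ of degree $d$ over a complete and algebraically closed non-Archimedean field $\mathbb{K}$ with residue field $\mathbb{C}$, in exactly the way a degenerate holomorphic family is encoded by an element of $\operatorname{Rat}^d_{\mathbb{L}}$, so that the Berkovich-dynamical mechanism behind Theorem~\ref{ithr:3} applies to $\mishl{f}$. One first disposes of the case where some subsequence of $\{f_n\}$ stays in a compact subset of $\operatorname{Rat}^d_{\mathbb{C}}$: along such a subsequence $f_n\to f\in\operatorname{Rat}^d_{\mathbb{C}}$, and by a standard normalization argument every rescaling sequence of frames is bounded (otherwise the conjugated iterates degenerate), so every rescaling has period $1$ with limit M\"obius conjugate to $f$; in particular no rescaling of period $\ge 2$ exists and there is a single dynamically independent rescaling, and the conclusions hold. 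Hence assume from now on that $\{f_n\}\to\partial\operatorname{Rat}^d_{\mathbb{C}}$ in $\mathbb{P}^{2d+1}_{\mathbb{C}}$.

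To construct $\mishl{f}$, fix a scale $\{\rho_n\}$ with $\rho_n\to 0$ recording the rate at which $\{f_n\}$ leaves $\operatorname{Rat}^d_{\mathbb{C}}$, and choose a non-principal ultrafilter $\mathcal{U}$ on $\mathbb{N}$ along which every ratio of homogeneous coordinates of the $f_n$, every entry of the finitely many frames under consideration (suitably normalized), and every logarithmic rate of growth of such a quantity relative to $\{\rho_n\}$ has a limit. The ultrapower $\mathbb{K}_0=\prod_{\mathcal{U}}\mathbb{C}$ is algebraically closed and $\deg(\mishl{f})=d$ by a first-order transfer; the prescribed rates define a non-Archimedean valuation on $\mathbb{K}_0$ with value group a subgroup of $\mathbb{R}$; and, because $\{f_n\}\to\partial\operatorname{Rat}^d_{\mathbb{C}}$, the map $\mishl{f}$ cannot be conjugated over $\mathbb{K}_0$ to one reducing to a degree $d$ map over $\mathbb{C}$. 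Replacing $\mathbb{K}_0$ by its completion $\mathbb{K}$ --- and restricting the subsequence once more so that the finitely many coefficients of the finitely many residual maps occurring below land in $\mathbb{C}$ --- one obtains $\mishl{f}\in\operatorname{Rat}^d_{\mathbb{K}}$ of the desired kind. A genuine $\mathbb{C}((t))$-family cannot be substituted for $\mishl{f}$, since two sequences tending to $0$ need not have rational ratio of logarithmic rates; an alternative that does allow one to quote Theorem~\ref{ithr:3} verbatim is to extract the finite combinatorial reduction datum of $\mishl{f}$, perturb its moduli to rational values, and realize it by an actual degenerate holomorphic family.

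The heart of the argument is then the dictionary, obtained by running the archimedean-to-non-archimedean translation used to prove Theorem~\ref{ithr:3} backwards: rescalings of $\{f_n\}$ of period $q_j$ correspond to periodic points of $\mishl{f}$ in the Julia set $J(\mishl{f})\subset\mathbb{P}^{1,an}_{\mathbb{K}}$ whose attached residual map has degree $\ge 2$; under this correspondence the rescaling limit is the residual map, which is defined over $\mathbb{C}$ and equals the genuine limit $M_{j,n}^{-1}\circ f_n^{q_j}\circ M_{j,n}$ along the subsequence, so it is postcritically finite exactly when the residual map is; the periods match; and two rescalings are dynamically independent in the sense of Definition~\ref{def:3} if and only if the associated Berkovich periodic points lie in distinct cycles of $\mishl{f}$. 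Here one uses that dynamical independence passes to subsequences --- a sub-subsequence witnessing dependence would witness it already for $\{f_n\}$ --- so the finitely many rescalings one starts with survive every extraction. Granting the dictionary, Theorem~\ref{ithr:2} follows from the corresponding statements about $\mishl{f}$: the Riemann--Hurwitz / critical-point count behind Theorem~\ref{ithr:3}, which uses only that the ground field is complete, algebraically closed, with residue field $\mathbb{C}$ and divisible value group, bounds the number of distinct cycles of periodic points of $\mishl{f}$ in $J(\mishl{f})$ carrying a non-postcritically-finite residual map of degree $\ge 2$ by $2d-2$, hence bounds the number of pairwise dynamically independent such rescalings of $\{f_n\}$ by $2d-2$; and for $d=2$ the classification of such $\mishl{f}$ --- either two cycles whose residual maps are, in order of increasing period, a quadratic rational map with a multiple fixed point and a prefixed critical point, and a quadratic polynomial up to conjugacy; or a single cycle that is a quadratic rational map with a multiple fixed point, to which every other cycle is related --- transfers to alternatives (1) and (2).

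The main obstacle is the dictionary of the previous paragraph: ensuring that the finitely many relevant residual maps are genuinely defined over $\mathbb{C}$ and coincide with the conjugated iterates of the $f_n$, and that dynamical (in)dependence is transported faithfully in both directions, calls for a careful choice of ultrafilter and a careful sequence of subsequence extractions; one must also verify that the proof of Theorem~\ref{ithr:3} is insensitive to which complete algebraically closed non-Archimedean field with residue field $\mathbb{C}$ it is carried out over. By comparison the escape-rate normalization and the model construction are routine.
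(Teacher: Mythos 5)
Your proposal takes a genuinely different route from the paper's. The paper proves Theorem~\ref{ithr:2} by constructing an actual degenerate holomorphic family $\mish{f}$ realizing the given finite collection of sequential rescaling data (Proposition~\ref{pro:1}): each $\seqMjn$ is first replaced by an equivalent normalization $\{\gamma_{\vec{z}_{j,n}}\}$ built from $f_n$-periodic points shadowing a repelling cycle of $g_j$ (Lemma~\ref{lem:1}); one then takes Zariski closures of suitable incidence varieties inside $\P^{2d+1}_\C \times (\O^3 \times \O^3)^N$, resolves the rational map $I$ recording the conjugated iterates, lets $x_\infty$ be a limit point of the data in the resolution, and chooses a holomorphic curve through $x_\infty$ transverse to the exceptional divisor. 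Theorem~\ref{ithr:3} is then applied verbatim, and the backward transfer of dynamical dependence to the original sequences is verified using the Berkovich dynamics of the associated map over $\L$. You instead build a single rational map over (the completion of) a valued ultrapower of $\C$ and propose to re-run the Berkovich argument over that field.

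The difficulty is precisely the step you flag as ``the main obstacle'': the dictionary between rescalings of $\seqfn$ and type II periodic points of $\mishl{f}$ over your field $\mathbb{K}$, and especially the two-way transfer of dynamical (in)dependence, is asserted but never established --- and this is exactly where the paper's proof of Proposition~\ref{pro:1} does its real work. Two further issues are left open. First, it is not automatic that the residue field of your valued ultrapower is $\C$; without that, the tangent maps $T_x\mishl{f}^q$ at type~II periodic points are not complex rational maps, and the identification of residual maps with the rescaling limits $g_j$ (which are in $\rat^d_\C$) collapses. Second, the $d=2$ classification you invoke is Theorem~\ref{thr:periodic}, quoted from~\cite{KiwiPuiseuxQuadratic} and proved there for $\pberl$ over the specific field $\L$ with value group $\exp(\Q)$; transporting it to $\mathbb{K}$ requires a verification you do not supply. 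Your fallback sketch --- extract the finite reduction datum of $\mishl{f}$, perturb moduli to rational values, and realize by an actual degenerate holomorphic family --- is closer in spirit to what the paper does, but making it precise essentially forces one to redo the resolution argument of Proposition~\ref{pro:1}. As written, the proposal identifies the correct target (reduce to non-Archimedean dynamics) but leaves the central transfer unproved.
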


\subsection*{Outline}
This paper is organized as follows. In Section~\ref{sec:examples} we give several examples of rescaling limits for holomorphic families as well as for sequences of rational maps. In Section~\ref{sec:hol2ber} we discuss how each holomorphic family determines a rational map with coefficients in a non-Archimedean field $\L$. We show
how rescaling limits of a given holomorphic family correspond to certain periodic points of the corresponding non-Archimedean dynamical system. 
These periodic points lie in the Berkovich projective line over $\L$.
In Section~\ref{sec:ber2res} we employ the tree structure of the Berkovich projective line to prove Theorem~\ref{ithr:3}.
Finally, in Section~\ref{sec:sequential}, we reduce the proof of Theorem~\ref{ithr:2} to   Theorem~\ref{ithr:3} after showing 
that sequential rescalings are realized by rescalings of holomorphic families.

\subsection*{Acknowledgements} 
I would like to express my gratitude to Laura De Marco and Adam L. Epstein for several conversations which motivated my interest for the questions addressed here.

\section{Examples}
\label{sec:examples}
The aim of this section is to illustrate rescaling limits
through examples. All the examples below are motivated
by holomorphic families 
 that have been already considered in the literature.

\subsection{Cubic polynomials}
Consider the family of cubic polynomials with one period $3$ critical point $w=0$ which, following Milnor~\cite{MilnorPeriodicCubic}, can be parametrized by
$$F_c (w) = \alpha (c) w^3 + \beta (c) w^2 +1$$
where
$$\alpha (c) = -\dfrac{c^3+2c^2+c+1}{c(c+1)^2}, \quad \beta (c)= c-\alpha(c).$$
The polynomial $F_c$ 
is well defined and cubic for all $c \in \C$ such that
 $\alpha (c) \neq 0, \infty$. 

Polynomials of this form  lead to six different degenerate
holomorphic families of degree $3$ that correspond
to the six points $c \in \C \cup \{ \infty \} \equiv \ponec$ for which $\alpha (c) = 0$ or $\infty$. Three of these degenerate holomorphic families will have a rescaling of period $1$ while the other three will have a rescaling of period $3$. 

If $\alpha(c)$ vanishes at $c=c_0$,  then the holomorphic family $\{ F_{t+c_0} \}$ is a degenerate holomorphic family of degree $3$, 
defined in a neighborhood of $t=0$. 
The trivial  moving frame  $\{M_t: z \mapsto z \}$ is a rescaling of period $1$ for
$\{ F_{t+c_0} \}$ with rescaling limit $c_0 w^2 +1$. Here
$F_{t+c_0}$ converges to  $c_0 w^2 +1$
uniformly on compact subsets of $\C$. Note that $c_0w^2 + 1$ is affine conjugate to $z^2 + c_0$. The roots of $\alpha(c)$  are exactly the
three parameters $c$ for which the critical point $z=0$
has period $3$ under iterations of  the standard quadratic family $z \mapsto z^2+c$ (i.e. the parameters of the Julia sets known as the rabbit, anti-rabbit, and airplane).

Now, for parameters $c$ close to $c=0$, we have that the moving 
frame $\{ M_t: z \mapsto t^2 z \}$ is a period $3$ rescaling for 
$\{ F_t \}$ with limit $g(z) = z^2$. 
Close to $c=-1$, the moving frame
$\{ z \mapsto t^5 z \}$ is a rescaling
of period $3$ for the family $\{ F_{-1 + t} \}$ with rescaling limit 
$g(z) = 2 z^2$. 
Finally, close to $c = \infty$, the moving frame
$\{ z \mapsto t^4 z \}$ is a rescaling
of period $3$ for the family $\{ F_{1/t} \}$ with rescaling limit 
$g(z) = - 2 z^2$. 

\medskip
This example illustrates a general phenomena which occurs when one considers degenerate holomorphic families that parametrize a neighborhood of infinity of an escape region of a periodic curve of cubic polynomials. The correct choice of rescaling $\{ M_t \}$ above is based 
on~\cite[Section~5]{AKMCubic}.

\subsection{Quadratic rational maps}
Given $a \in \C$ and $t \in \C \setminus \{ 0 \}$, let
    $$f_{a,t} (z) = {t} - \dfrac{1+t^2}{z}+ \dfrac{t}{z^2} -at^5.$$ 
For each $a \in \C$  we have that
 $\{f_{a,t} \}$ is a degenerate holomorphic family of degree $2$.

Setting $a=0$,  
 the critical point $z=0$ becomes periodic of period
$3$ under iterations of $f_{0,t}$. 
Since we are only interested on small values
of $t$, for each $a$, the holomorphic family $\{f_{a,t} \}$ may be regarded as a perturbation of $\{ f_{0,t} \}$. 

As anticipated by Stimson~\cite{Stimson} and
Epstein~\cite{EpsteinBounded} there exists a rescaling limit of these
families which is a quadratic rational map with a multiple fixed
point.  More precisely, given any $a \in \C$, the moving frame $\{ z
\mapsto tz\}$ is a rescaling of period $2$ for $\{f_{a,t} \}$ with
rescaling limit $$g (z) = \dfrac{z^2+z-1}{z-1}.$$ The convergence of
$M^{-1}_t \circ f^2_t \circ M_t$ is uniform on compact subsets of $\C
\setminus \{1 \}$. Note that $g$ has a multiple fixed point at
$\infty$ and the critical point $z=0$ of $g$ maps into this fixed
point in two iterations.  

Also, the moving frame $\{ L_t: t \mapsto t^3
z \}$ is another rescaling for $\{f_{a,t} \}$, but now of period $3$ and
rescaling limit $z \mapsto z^2 +a$, where the convergence of $L^{-1}_t
\circ f^3_t \circ L_t $ is uniform on compact subsets of $\C$.

\subsection{Latt\`es maps}
Consider the degree $4$ flexible Latt\`es family (e.g. see~\cite[Problem~7-g]{MilnorComplexBook}) given by
$$f_t(w) = \dfrac{(w^2 -t)^2}{4w(w-1)(w-t)}.$$
It is a degenerate holomorphic family.
The trivial moving frame $\{ M_t: z \mapsto z \}$
leads to a period $1$ rescaling limit, 
$$w \mapsto \dfrac{w^2}{4(w-1)},$$
which after a change of coordinates
is the quadratic Chebyshev polynomial $z \mapsto z^2 -2$ (e.g. see~\cite[Problem~7-c]{MilnorComplexBook}).
For more about Latt\`es maps see~\cite{MilnorLattes} and the references therein.

There are a wealth of rescaling sequences associated to this family $\mish{f}$.
In fact, as we shall see below, one for each 
periodic point of the tent map:
$$\operatorname{Tent} (\alpha) = \begin{cases}
2 \alpha & \mbox{ if } \alpha \le 1/2, \\
-2 \alpha + 2& \mbox{ if } \alpha > 1/2.
\end{cases}$$

We restrict to $t \in ]0, +\infty[$ so that $t^\alpha \in ]0,
+\infty[$ is well defined for all $\alpha \in \R$. As $t \to 0$, 
$$ \dfrac{f_t (t^\alpha \cdot z)}{t^{\operatorname{Tent}(\alpha)}} \to 
\left\{ \begin{array}{ll} - \dfrac{z^2}4 & \text{if  } 0< \alpha < 1/2, \\
& \\
-\dfrac{z^{-2}}4 & \text{if  } 1/2 < \alpha < 1,
\end{array} \right.
$$
uniformly on compact subsets of $\C^\times = \C \setminus \{ 0 \}$.

Now we consider a sequence $\{ t_n \}$ such that
$t_n \searrow 0$ and let $\{ f_{t_n} \}$ be the corresponding
 sequence  of degree $4$ rational maps.
For  $\alpha$ periodic, say of period $q$,
 under $\operatorname{Tent}$, we have that
$\{ M_n (z) = t_n^\alpha z\}$ is a sequential rescaling for
$\{ f_{t_n} \}$ with limit affine conjugate to $z \mapsto z^{\pm 2^q}$.
For example, we may consider $\alpha=2/5$ which has period $2$ 
and let $s_n = t^{2/5}_n$. It follows that, as $n \to \infty$,
$$\dfrac{1}{s_n} \cdot f^2_{t_n} (s_n \cdot z) \to - 4 {z^{-4}}.$$

It is not difficult to check that rescalings associated to distinct periodic orbits of $\operatorname{Tent}$ are dynamically independent rescalings for  $\{ f_{t_n} \}$ (see Definition~\ref{def:3}). Hence, there are infinitely many pairwise ``distinct'' rescalings for $\{ f_{t_n} \}$. However, all rescaling limits are a special type of postcritically finite maps, namely  monomials.

\subsection{Cantor cross circle}
Now we discuss an example which, from the viewpoint of rescaling limits, 
is fairly similar to the flexible Latt\`es maps discussed above.
It is based on an example introduced and discussed by McMullen in~\cite[Section~7]{McM88}
where he shows that for sufficiently small values of $t$ the degree $5$
rational map
$$f_t(z) = z^3 + \dfrac{t}{z^2}$$
has Julia set homeomorphic to a circle cross a Cantor set.

The analogue of the tent map now is given by:
$$\beta(\alpha) = 
\begin{cases}
  3 \alpha & \mbox{ if } \alpha \leq 1/5, \\
  -2 \alpha +1  & \mbox{ if } \alpha > 1/5.
\end{cases}
$$
We also restrict to $t \in ]0, +\infty[$ and observe that, as $t \to 0$, uniformly on compact subsets of $\C^\times$ we have:
$$ \dfrac{f_t (t^\alpha \cdot z)}{t^{\beta(\alpha)}} \to 
\left\{ \begin{array}{ll} {z^3} & \text{if  } \alpha < 1/5, \\
& \\
z^{-2} & \text{if  } 1/5 < \alpha.
\end{array} \right.
$$

Periodic points of $\beta$ lie in  $[0,1/6] \cup [1/4,1/2]$ and are in one to one correspondence with those of the one-sided full shift on two symbols. 
Choose $t_n \searrow 0$.
For each periodic point $\alpha$, say of period $q$ under iterations of $\beta$,  we have that $\{M_{t_n} : z \to t_n^\alpha z \}$ is a period $q$ rescaling for $\{ f_{t_n} \}$
with rescaling limit $z \mapsto z^{{3^m} \cdot {(-2)^{q-m}}}$, where $m$ is the number of elements of the orbit of $\alpha$ under $\beta$ contained in $[0,1/6]$.
Periodic points $\alpha$ which lie in distinct $\beta$-orbits lead to dynamically independent rescalings for $\{ f_{t_n} \}$.

\section{From holomorphic families to Berkovich dynamics}
\label{sec:hol2ber}
To regard a holomorphic family $\mish{f}$ as a single dynamical system we consider the {\em field of formal Puiseux series} $\puiseux$.
This field $\puiseux$ is an algebraic closure
of the field of formal Laurent series $\laurent$. More precisely, $\puiseux$ is the injective limit of $\{ \laurentm \}_{m \in \N}$ with the obvious inclusions. 
The order of vanishing at $t=0$ induces a non-Archimedean absolute value 
$| \cdot |$ on $\puiseux$. That is,
given an element $z \in \puiseux$ we may consider $m \in \N$,  $j_0 \in \Z$ and $c_j \in \C$ such that $z = \sum_{j \ge j_0} c_j t^{j/m}$, then, provided that $z \neq 0$, we have
$$|z| = \exp \left(- \min \left\{\dfrac{j}{m} \mid c_j \neq 0 \right\} \right).$$

Although $\puiseux$ is algebraically closed (e.g. see~\cite[Corollary~1.5.11]{LibroCasas}) it is not complete with respect to
$| \cdot |$. For us, it is more convenient to work with the field $\L$ obtained as the completion of $\puiseux$. It follows 
that $\L$ enjoys being both complete and algebraically closed~\cite[Chapter~5, J]{RibenboimBook}.

\begin{definition}
  \label{def:7}
  Consider a  degree $d \ge 1$ holomorphic family $\mish{f}$. We may write
$$f_t (z) = \dfrac{a_0(t) z^d + \cdots + a_d (t)}{b_0 (t) z^d + \cdots + b_d(t)}$$
where  $a_j (t), b_j(t)$ are holomorphic functions whose domains contain a neighborhood of the origin, for all $j=0, \dots,d$.
Let $\boldsymbol{a_j}, \boldsymbol{b_j} \in \L$ be the Taylor series at  $t=0$ of the holomorphic functions $a_j (t), b_j(t)$.
Then the degree $d$ rational map $\boldsymbol{f}: \ponel \to \ponel$ given by
\begin{equation}
  \label{eq:6}
  \boldsymbol{f} (z) = \dfrac{\boldsymbol{a_0}  z^d + \cdots + \boldsymbol{a_d} }{\boldsymbol{b_0}  z^d + \cdots + \boldsymbol{b_d}}
\end{equation}
is called {\sf the rational map associated to $\mish{f}$}.
\end{definition}

Since $f_t$ has degree $d$ for all $t \neq 0$ the degree of the associated map $\boldsymbol{f}:  \ponel \to \ponel$ is $d$.

We will denote the rational map in $\L (z)$ associated to a holomorphic family by the corresponding boldface symbol.
That is, the rational map associated to $\mish{g}$, $\mish{M}$, $\mish{L}$ will be respectively denoted by $\mishl{g}$, $\mishl{M}$, $\mishl{L}$.

We will study the associated map $\boldsymbol{f}$ with tools from iteration of rational maps over non-Archimedean fields 
(e.g. see~\cite{BakerRumelyBook,JonssonDynBer}).
Since fields such as $\L$ are totally disconnected and not locally compact it is convenient, and nowadays standard, to extend the action of $\boldsymbol{f}$
to the Berkovich projective line $\pberl$ over $\L$. 

\subsection{The field $\L$}
Before working on Berkovich space, 
let us discuss the basic properties
of the field $\L$. 

As the elements of $\puiseux$, the elements of $\L$ may also be 
represented by series in $t$ but now 
of the form
$$z= \sum_{j \ge 0} a_j t^{\lambda_j},$$
where $a_j \in \C$, $\lambda_j \in \Q$ and, if $a_j$ does not vanish
for sufficiently large $j$, then $\lambda_j \to \infty$ as $j \to
\infty$.  The absolute value is given by $|z| = \exp (-\min \{ \lambda_j \mid a_j \neq 0 \})$
provided $z \neq 0$.  Hence, $|z|$ vanishes or takes values in the
multiplicative group $\exp(\Q)$ called the {\sf value group
  $|\L^\times|$ of $\L$}.

Given $z_0 \in \L$ and $r >0$ we say that  $\cball{r}{z_0} = \{ z \in \L \mid |z-z_0 | \leq r \}$ is a  {\em closed ball} and
$\oball{r}{z_0} =  \{ z \in \L \mid |z-z_0| < r \}$ is an {\em open ball}. However, all of these balls  are open and closed sets in the topology of $\L$.
When $r \notin \exp(\Q)$ note that $\cball{r}{z_0} =\oball{r}{z_0}$. 

The {\sf valuation ring} is the  local ring $\mathfrak{O}_\L = \{ | \cdot | \le 1 \}$ 
with   maximal ideal $\mathfrak{M}_\L = \{ | \cdot | < 1 \}$.
 The {\sf residue field} $\widetilde{\L} = \mathfrak{O}_\L/\mathfrak{M}_\L$ is canonically identified with $\C$
via $\C \ni c \mapsto c + \mathfrak{M}_\L \in \mathfrak{O}_\L/\mathfrak{M}_\L$. For more about valuations and local fields see, for example,~\cite{LibroCassels} and~\cite{RibenboimBook}.

When convenient we fix a coordinate and identify $\ponel$ with $\L \cup \{ \infty \}$ via $[z:1] \mapsto z$ and $[1:0] \mapsto \infty$. 
Similarly, we identify $\ponec$ with $\C \cup \{ \infty \}$. 

We may {\sf reduce} points of $ \ponel$ to  $\ponec$ via the map $\rho: \ponel \to \ponec$ which  is obtained after extending the quotient map $\mathfrak{O}_\L \to \mathfrak{O}_\L/\mathfrak{M}_\L \equiv \C \subset \ponec$ to $\ponel$ by declaring that  $\ponel \setminus \mathfrak{O}_\L$ maps onto
$\infty \in \ponec$. 

Given any rational map 
$\varphi: \ponel \to \ponel$, 
there exists a rational map $\widetilde{\varphi}: \ponec \to \ponec$ such that,
 for all but finitely many $z \in \ponec$, we have that $\varphi$ maps $\rho^{-1}(z)$ onto $\rho^{-1}(\widetilde{\varphi}(z))$.
The map $\widetilde{\varphi} : \ponec \to \ponec$ obtained from $\varphi$ is called the {\sf reduction of $\varphi$} (e.g.~see~\cite[Section~2.3]{SilvermanDynamicsGTM}).
We will recover dynamical systems acting on $\ponec$ from rational maps acting on $\ponel$ via (variations of) this reduction procedure.

The map   $\widetilde{\varphi}$ may be easily computed after writing $\varphi$ as a quotient of polynomials $P$ and $Q$ in $\mathfrak{O}_\L[z]$, so that at least one of the coefficients of these polynomials is a unit in $\mathfrak{O}_\L$ (i.e. has absolute value $1$). Passing to the quotient $\C[z] \equiv (\mathfrak{O}_\L / \mathfrak{M}_\L) [z]$ we obtain polynomials $\widetilde{P}$ and $\widetilde{Q}$ which  may have a non-constant greatest common divisor $H$. Denote  the roots of $H$ by $\cH(\varphi)$ (compare with ``holes'' in~\cite{DeMarcoIteration}). 
It follows that
$\widetilde{\varphi} = R/S$ where  $R, S \in \C[z]$ are such that $\widetilde{P}= R \cdot H$
and $\widetilde{Q} = S \cdot H$, under the agreement that if $S=0$, then $\widetilde{\varphi}\equiv \infty$  (e.g.~see~\cite[Section~2.3]{SilvermanDynamicsGTM}). Moreover, provided that $\widetilde{\varphi}$ 
is not constant, $\varphi(\rho^{-1}(z)) = \rho^{-1}(
\widetilde{\varphi}(z))$ if $z \notin \cH(\varphi)$ and  $\varphi(\rho^{-1}(z)) = \ponel$ otherwise (see~\cite[Proposition~2.4]{riveratesis03}).


Reduction is, roughly speaking, the algebraic counterpart
of taking the limit as $t \to 0$ of a holomorphic family 
$\mish{f}$:

\begin{lemma}
  \label{lem:12}
  Consider a  degree $d \ge 1$ holomorphic family $\mish{f}$ with associated rational map $\mishl{f} : \ponel \to \ponel$.
Then, as $t \to 0$, 
$$f_t \to \widetilde{\mishl{f}}$$
uniformly on compact subsets of $\ponec \setminus \cH(\mishl{f})$.
\end{lemma}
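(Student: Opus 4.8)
The plan is to reduce the statement to the elementary fact that a rational map on $\ponec$ depends continuously on its coefficients, away from the common zeros of numerator and denominator, once those coefficients have been suitably normalized. First I would normalize the family: writing $f_t(z)$ as in Definition~\ref{def:7}, let $k\ge 0$ be the least order of vanishing at $t=0$ among the holomorphic functions $a_0(t),\dots,a_d(t),b_0(t),\dots,b_d(t)$. Dividing numerator and denominator of $f_t$ by $t^k$ changes neither $f_t$ as a map nor the associated map $\mishl{f}\in\L(z)$, and it replaces the coefficients by holomorphic functions $\ha_j(t):=a_j(t)/t^k$ and $\hat{b}_j(t):=b_j(t)/t^k$ that are holomorphic at $t=0$, with $\ha_{j_0}(0)\ne 0$ or $\hat{b}_{j_0}(0)\ne 0$ for at least one index $j_0$. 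Consequently the Taylor series $\mishl{a_j}/t^k$ and $\mishl{b_j}/t^k$ lie in $\mathfrak{O}_\L$ and at least one of them is a unit, so $P=\sum_j(\mishl{a_j}/t^k)z^{d-j}$ and $Q=\sum_j(\mishl{b_j}/t^k)z^{d-j}$ are precisely of the form used to compute $\widetilde{\mishl{f}}$.

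Next I would match the limiting coefficients with those of the reduction. Under the canonical identification $\widetilde{\L}\equiv\C$ one checks that $\widetilde{\mishl{a_j}/t^k}=\ha_j(0)=\lim_{t\to 0}\ha_j(t)$, and similarly for the $\hat{b}_j$, so that
\[
\widetilde{P}(z)=\sum_j \ha_j(0)\,z^{d-j},\qquad \widetilde{Q}(z)=\sum_j \hat{b}_j(0)\,z^{d-j},
\]
and $\widetilde{\mishl{f}}=R/S$ with $\widetilde{P}=R\cdot H$, $\widetilde{Q}=S\cdot H$, $H=\gcd(\widetilde{P},\widetilde{Q})$, and $\cH(\mishl{f})$ the zero set of $H$. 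Since each $\ha_j$ and $\hat{b}_j$ is continuous at $0$, the homogeneous forms $P_t(z,w)=\sum_j\ha_j(t)\,z^{d-j}w^j$ and $Q_t(z,w)=\sum_j\hat{b}_j(t)\,z^{d-j}w^j$ that represent the numerator and denominator of $f_t$ converge, as $t\to 0$, to the homogenizations of $\widetilde{P}$ and $\widetilde{Q}$, uniformly on the unit sphere of $\C^2$.

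To finish, fix a compact set $K\subset\ponec\setminus\cH(\mishl{f})$. Because $\gcd(R,S)=1$, the common zeros of the homogeneous forms $\widetilde{P}$ and $\widetilde{Q}$ are exactly the points of $\cH(\mishl{f})$; hence, lifting $K$ to the unit sphere of $\C^2$, the pair $(\widetilde{P},\widetilde{Q})$ is bounded away from $(0,0)$ there, and therefore so is $(P_t,Q_t)$ once $t$ is small enough. Using that the quotient map $\C^2\setminus\{0\}\to\ponec$ is uniformly continuous on each region bounded away from the origin, together with $[\widetilde{P}:\widetilde{Q}]=[R:S]=\widetilde{\mishl{f}}$ on $K$, I obtain $f_t=[P_t:Q_t]\to\widetilde{\mishl{f}}$ uniformly on $K$ in the chordal metric, which is the assertion.

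The computation above is essentially bookkeeping; the one point that needs care is carrying the convergence uniformly over all of $\ponec\setminus\cH(\mishl{f})$, in particular where $\widetilde{\mishl{f}}$ takes the value $\infty$ and at the point $z=\infty$ itself. This is exactly why everything is done in homogeneous coordinates, with $\cH(\mishl{f})$ read off from the homogenized reductions; equivalently, one may split $K$ into a part bounded away from $\infty$ and, after conjugating by $z\mapsto 1/z$, a part bounded away from $0$, and reduce to the affine estimate in each.
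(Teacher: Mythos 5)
Your argument is correct and follows the same route as the paper's proof: normalize so the coefficients lie in $\mathfrak{O}_\L$ with at least one unit, identify $\widetilde{P},\widetilde{Q}$ with the $t\to 0$ limits of numerator and denominator, and conclude uniform convergence away from the holes. The only difference is that you carry out the uniformity estimate explicitly in homogeneous coordinates (handling $\infty$ carefully), whereas the paper leaves that bookkeeping implicit.
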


Although elementary, we include a proof for this observation which is important to relate the Archimedean and non-Archimedean worlds.

\begin{proof}
  We may write  $f_t (z)= P_t(z)/Q_t(z)$ where  $\{P_t\},  \{Q_t\}$ are holomorphic families of complex polynomials
so that the associated maps $\mishl{P}, \mishl{Q}$ can be regarded as elements of $\mathfrak{O}_\L[z]$ with at least one of the coefficients involved being a unit. 
In particular
$P_t$ and $Q_t$ converge as $t\to 0$ (as complex maps of $\ponec$) to $P_0$ and $Q_0$, respectively.
Thus, the quotient $f_t (z) = P_t(z)/Q_t(z)$ converges uniformly to $P_0(z)/Q_0(z)$ for all $z$ outside a neighborhood of the
common roots of $P_0$ and $Q_0$. The lemma follows since away from these roots  $P_0(z)/Q_0(z) = \widetilde{\mishl{P}}(z)/\widetilde{\mishl{Q}}(z)=\widetilde{\mishl{f}}(z)$.
\end{proof}

\subsection{Berkovich projective line over $\L$}
Our aim now is to summarize some properties of the Berkovich projective line $\pberl$ over $\L$ with emphasis on those which  are relevant for this paper.
We will not reproduce here the several equivalent definitions 
of $\pberl$ but rather refer the reader to the
excellent literature already available. For a detailed exposition we refer the reader to~\cite{BakerRumelyBook}, for those with preference for a condensed discussion we suggest~\cite{JonssonDynBer}, while the more akin to the foundational sources will certainly enjoy
 Berkovich's monograph~\cite{MonographBerkovich}.

The Berkovich projective line $\pberl$ is, as a topological space,  a compact, Hausdorff, arcwise connected space which contains
$\ponel$ as a dense subset. It has a non-metric tree structure (e.g. see~\cite[Section~2]{JonssonDynBer}). In particular, there is a unique arc $[x,y]$ between any pair of distinct points $x, y$.

A {\sf direction $w$} at a point $x \in \pberl$ is an equivalence class of the relation in $\pberl \setminus \{ x  \}$ which identifies $y$ and $y'$ if
$]x,y] \cap ]x,y'] \neq \emptyset$. That is, the segments joining $x$ to $y$ and $x$ to $y'$ share an initial portion.
The set formed by all the directions $w$ at $x$ is called the {\sf tangent space at $x$} which will be denoted by $\tang{x}$.
The set of points which represents $w \in \tang{x}$ is denoted by $U_{w}$ (e.g. see~\cite[Appendix B.6.]{BakerRumelyBook} or~\cite[Section~2.1.1]{JonssonDynBer}).
If $x$ is a branched point of $\pberl$, then we say that  $U_{w}$ is a {\sf strict open Berkovich disc}. 
The finite intersection of strict open Berkovich discs is called a {\sf strict basic open set}.

The topology of $\pberl$ coincides with the smallest topology such that all the strict open Berkovich discs are open (e.g. see~\cite[Section~3.6]{JonssonDynBer}).
It is not difficult to check that the sets of the form $U_w$ are (arcwise) connected
and that $\partial U_w = \{ x\}$ for all $w \in \tang{x}$ and all $x \in  \pberl$ (e.g. see~\cite[Section~2.6]{BakerRumelyBook}). 

The points of $\ponel \subset \pberl$ are endpoints in the tree structure of $\pberl$. We refer to these point as {\sf rigid points}, {\sf type I points}, or
{\sf classical points}. 

Points with at least three directions are branched points in the tree structure of  $\pberl$.
Branched points of $\pberl$ are called {\sf type II} points
and will play a central role in our discussions. 
It turns out that at any type II point $x$ there are uncountably many directions. In fact, $T_x \pberl$ is naturally isomorphic to the complex projective
line, that is to $\ponec$ (e.g. see~\cite[Section~3.8.7]{JonssonDynBer}). 

After identification of $\ponel$ with $\L \cup \{\infty\}$ via $[z:1] \mapsto z$, type II points are in one--to--one correspondence with the collection of
closed balls $B \subset \L$ with radius in $| \L^\times | = \exp(\Q)$. 
The point corresponding to the ring of integers $\mathfrak{O}_\L$ is called the {\sf Gauss point} and will be denoted $x_g$.  Directions at the Gauss point
are closely related to the reduction map  $\rho: \ponel \to \ponec$.
In fact, $U_w$ is a direction at $x_g$ if and only if
$U_w$ is the convex hull of $\rho^{-1}(z)$ for some $z \in \ponec$.
Moreover, the induced map $\tang{x_g} \to \ponec$ is an isomorphism of projective lines over $\C$. That is, we may identify $\tang{x_g}$ with $\ponec$ via reduction (e.g. see~\cite[Section~3.8.7]{JonssonDynBer}).

The rest of Berkovich space consists of points of types III and IV that we will not discuss here.

\medskip
The action $\varphi: \ponel \to \ponel$  of any rational map $\varphi \in \L(z)$  extends continuously to $\pberl$.
To ease notation we will also denote the extended map by $\varphi: \pberl \to \pberl$. This continuous extension respects
 compositions and thus iterations of rational maps.
The action $\vphi: \pberl \to \pberl$ is open, surjective, preserves the type of the points (I--IV) and each point has  at most $\deg \varphi$ preimages (see~\cite[Section~4.5]{JonssonDynBer} and/or~\cite[Proposition 2.15, Corollaries 9.9, 9.10]{BakerRumelyBook}).
There is a notion of local degree $\deg_x \varphi$ at any $x \in \pberl$ that coincides with the usual notion at type I points. If we count multiplicities
accordingly, then every point in $\pberl$ has exactly 
$\deg \varphi$ preimages (see~\cite[Section~4.6]{JonssonDynBer} and/or~\cite[Corollary 9.17]{BakerRumelyBook}) ).

A rational map  $\varphi$ induces a {\sf tangent map} $T_x \varphi: \tang{x} \to \tang{\varphi(x)}$ at every point $x \in \pberl$. 
Namely, there exists a connected neighborhood $V$ of $x$ such that, for all directions 
$w \in  \tang{x}$, we have that $\varphi(U_w \cap V) \subset U_{w'}$ for some $w' \in  \tang{\varphi(x)}$  
The direction $w'$ only depends on $w$ and not on the choice of $V$ (see~\cite[Theorem 9.26]{BakerRumelyBook} and/or~\cite[Corollary 2.13]{JonssonDynBer}).
The  {\sf tangent map} $T_x \varphi: \tang{x} \to \tang{\varphi(x)}$ is defined by  $T_x \varphi (w) = w'$.
With this notation, $\varphi (U_w) = U_{w'}$ or  $\varphi (U_w) = \pberl$. In the former case we say that $w$ is a {\sf good direction at $x$} and 
in the latter we say that $w$ is a {\sf bad direction at $x$}.

According to Rivera-Letelier~\cite[Section 2]{riveratesis03} (for a proof in the language of Berkovich spaces closer to our exposition see~\cite[Sections 2.3,~9.1]{BakerRumelyBook}) we have the following:

\begin{proposition}
  \label{pro:3}
Let $\varphi: \pberl \to \pberl$ be a rational map of degree at least $1$. Then $\varphi$ fixes the Gauss point $x_g$ if and only if $\deg \tilde{\varphi} \ge 1$.

Assume that  $\varphi (x_g) = x_g$.  After identifying $\tang{x_g}$ to $\ponec$ via reduction,  the following holds:

\begin{enumerate}
\item $\deg_{x_g} \varphi = \deg \widetilde{\varphi}$. 

\item $T_{x_g} \varphi (w) = \widetilde{\varphi}(w)$ for all $w \in \ponec$.

\item For all $w \in \cH(\varphi)$, we have that  $\varphi (U_w) = \pberl$.
Let $w'= \widetilde{\varphi}(w)$. Then there exists $\delta \ge 1$ such that
every point in $U_{w'}$ has $\delta + \deg_w \widetilde{\varphi}$ preimages in $U_w$ and every point not in 
 $U_{w'}$ has $\delta$ preimages in $U_w$, counting multiplicities.

\item For all $w \in \ponec \setminus \cH(\varphi)$, we have that $\varphi (U_w)= U_{w'}$  where $w'=\widetilde{\varphi}$.
The degree of $\varphi: U_w \to U_{w'}$ is well defined and coincides with
$\deg_w \widetilde{\varphi}$.

\end{enumerate}
\end{proposition}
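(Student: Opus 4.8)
The plan is to read everything off the explicit recipe for the reduction $\widetilde\varphi$ recalled above. I would first normalize: fix an affine coordinate and write $\varphi$ in homogeneous form $[\,P:Q\,]$ with $P,Q$ binary forms of degree $d=\deg\varphi$, coefficients in $\mathfrak{O}_\L$, coprime over $\L$, at least one coefficient a unit; let $\widetilde P,\widetilde Q$ be the reductions, $H=\gcd(\widetilde P,\widetilde Q)$, and $\widetilde\varphi=R/S$ with $\widetilde P=RH$, $\widetilde Q=SH$, so $\cH(\varphi)$ is the zero locus of $H$ and $\deg\widetilde\varphi=d-\deg H$.

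For the equivalence in the first sentence I would compute the seminorm $\varphi(x_g)$ by hand. Denoting by $[\cdot]_x$ the multiplicative seminorm of a point $x$, one has $[z-b]_{\varphi(x_g)}=[\varphi-b]_{x_g}=[P-bQ]_{x_g}/[Q]_{x_g}$ for all $b\in\L$, and a symmetric formula near $\infty$. If $\widetilde\varphi$ is non-constant, then $\widetilde P,\widetilde Q$ are non-zero and not proportional, hence $[P]_{x_g}=[Q]_{x_g}=1$ and $\widetilde{P-bQ}=\widetilde P-\rho(b)\widetilde Q\neq 0$ for every $b\in\mathfrak{O}_\L$; therefore $[z-b]_{\varphi(x_g)}=\max(1,|b|)=[z-b]_{x_g}$ for all $b$, which is precisely the statement $\varphi(x_g)=x_g$. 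Conversely, if $\widetilde\varphi$ is constant then either $\widetilde Q=0$, so $[\varphi]_{x_g}>1$, or $\widetilde P=\beta\widetilde Q$ with $\beta\in\C$, so $[\varphi-\beta]_{x_g}<1$; in either case $\varphi(x_g)\neq x_g$.

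Assume now $\varphi(x_g)=x_g$ and identify $\tang{x_g}$ with $\ponec$ via reduction. Part~(2) is immediate away from $\cH(\varphi)$: for $w\in\ponec\setminus\cH(\varphi)$ the reduction property recalled above gives $\varphi(\rho^{-1}(w))=\rho^{-1}(\widetilde\varphi(w))$, so $T_{x_g}\varphi(w)=\widetilde\varphi(w)$; since $T_{x_g}\varphi$ is a morphism of $\ponec$ (see \cite[\S9.1]{BakerRumelyBook}) agreeing with the morphism $\widetilde\varphi$ on the cofinite set $\ponec\setminus\cH(\varphi)$, the two coincide everywhere. Part~(1) then follows from the general fact that at a type~II fixed point the local degree equals the degree of the tangent map, so $\deg_{x_g}\varphi=\deg(T_{x_g}\varphi)=\deg\widetilde\varphi$. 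For the set equality $\varphi(U_w)=U_{\widetilde\varphi(w)}$ in~(4) I would note that $\varphi$ carries the classical points of $U_w$ onto those of $U_{\widetilde\varphi(w)}$ and cannot hit $x_g$ from inside $U_w$ (all preimages of $x_g$ other than $x_g$ lie in the discs over $\cH(\varphi)$), so the connected set $\varphi(U_w)$ equals $U_{\widetilde\varphi(w)}$; for the degree, after moving $w$ and $\widetilde\varphi(w)$ to $0$ and taking a classical $y\in U_{\widetilde\varphi(w)}$ in general position, the Newton polygon of $P(z)-yQ(z)$ shows it has exactly $\deg_w\widetilde\varphi$ roots in $\{|z|<1\}$. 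Finally, for~(3), that $\varphi(U_w)=\pberl$ when $w\in\cH(\varphi)$ is Proposition~2.4 of~\cite{riveratesis03}, already quoted; in particular $\varphi(U_w)\supseteq U_{w''}$ for every direction $w''\neq w'=\widetilde\varphi(w)$, so a generic point of $U_{w''}$ has a positive number $\delta$ of preimages in $U_w$, and the Newton polygon of $P(z)-yQ(z)$ in coordinates centred at $w$ and $w'$ shows this count stays equal to the same $\delta\ge 1$ for all $y\notin U_{w'}$ and jumps to $\delta+\deg_w\widetilde\varphi$ exactly when $y\in U_{w'}$; alternatively, granting~(1), (2) and~(4) this last point also drops out of degree conservation.

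The routine parts are the seminorm estimates and the bookkeeping with $\tang{x_g}\cong\ponec$. The main obstacle is the Newton-polygon analysis in parts~(3) and~(4): one must track, uniformly in the direction of $y$ at $x_g$, how many solutions of $P(z)=yQ(z)$ land in a prescribed residue disc, and identify the jump in this count with the local degree of $\widetilde\varphi$. Together with the structural input — which I would cite rather than reprove — that $T_{x_g}\varphi$ is a genuine rational self-map of the residue line of degree $\deg_{x_g}\varphi$, this is where the real work lies.
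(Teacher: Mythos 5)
The paper does not prove this proposition; it is quoted from Rivera-Letelier, with the Berkovich-language treatment attributed to Baker--Rumely, so there is no in-paper argument to compare against. Your direct proof is essentially sound, and where it leans on the literature it does so for the same structural facts the paper itself relies on (that $T_{x_g}\varphi$ is a degree-$\deg_{x_g}\varphi$ rational self-map of the residue line, and Rivera-Letelier's description of $\varphi(\rho^{-1}(w))$). The seminorm computation for the first equivalence is complete: for $\widetilde\varphi$ non-constant you get $[z-b]_{\varphi(x_g)}=\max(1,|b|)$ for every $b\in\L$, which determines $\varphi(x_g)=x_g$ since $\L$ is algebraically closed; the converse is handled by exhibiting a witness $b$. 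Parts (1)--(2) then follow by the extension-from-a-cofinite-set argument, which is fine.

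Two remarks. First, in (4) the parenthetical justification that $\varphi(U_w)$ misses $x_g$, namely that ``all preimages of $x_g$ other than $x_g$ lie in the discs over $\cH(\varphi)$,'' is itself the bad-direction/hole identification that parts (3)--(4) are establishing, so as phrased it is circular. The cleaner route uses the good/bad dichotomy recorded in the paper just before the proposition together with type preservation: if $\varphi(U_w)=\pberl$ for some $w\notin\cH(\varphi)$, then $\varphi(U_w)$ would contain type-I points outside $\rho^{-1}(\widetilde\varphi(w))$, contradicting $\varphi(\rho^{-1}(w))=\rho^{-1}(\widetilde\varphi(w))$; hence $\varphi(U_w)=U_{\widetilde\varphi(w)}$. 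Second, the Newton-polygon count in (4) is correct as sketched, but in (3) the uniformity of $\delta$ over all $y\notin U_{w'}$ is the surplus-multiplicity statement and genuinely needs the care you flag; the ``degree-conservation'' shortcut you mention at the end only pins down the total $\sum_{w\in\cH(\varphi)}\delta_w$ for a generic target direction, not the per-direction constancy of $\delta$, so it cannot replace the Newton-polygon argument there.
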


The group of affine linear transformations 
acts transitively on type II points of $\pberl$. In fact, consider closed balls  $B \subset \L$ and  $B' \subset \L$ with radii in the value group.
Then any affine linear transformation mapping the ball $B \subset \ponel$ onto the ball $B' \subset \ponel$,   maps the type II point corresponding to $B$ onto that corresponding to  $B'$.
Thus the group $\operatorname{PGL} (2, \L)$ of M\"obius transformations also acts transitively on type II points (c.f.~\cite[Corollary 2.13]{BakerRumelyBook}) and, in view of Proposition~\ref{pro:3},
the stabilizer of the Gauss point $x_g$ is $\operatorname{PGL} (2, \mathfrak{O}_\L)$.
Moreover, for all $\gamma \in \operatorname{PGL} (2, \mathfrak{O}_\L)$, we have  that
$\widetilde{\gamma} \in \operatorname{PGL} (2, \C)$ and $\rho \circ \gamma = \widetilde{\gamma} \circ \rho$ where $\rho: \ponel \to \ponec$ denotes the reduction map. 
In particular, $\gamma$ maps the direction containing $\rho^{-1}(z)$ onto the direction containing $\rho^{-1} (\widetilde{\gamma}(z))$, for all $z \in \ponec$.
Similarly, if $\gamma: \pberl \to \pberl$ is any linear fractional transformation and $x$ is a type II point, then $\gamma$ induces a bijection between $\tang{x}$ and $\tang{\gamma(x)}$. In fact, for all directions $w \in \tang{x}$ we have that $ \gamma(U_w)=U_{w'} $ for some $w' \in \tang{\gamma(x)}$
and the assignment $w \mapsto w'$ is an isomorphism between complex projective lines.

To understand the action of a  rational map $\varphi: \pberl \to \pberl$ on type II points and on their corresponding tangent spaces, it is convenient to change coordinates 
in order to work at the Gauss point. Namely, consider two M\"obius transformations $\eta, \gamma : \pberl \to \pberl$ and the points
$x = \eta (x_g), y = \gamma (x_g)$. Then, $y = \varphi (x)$ if and only if the rational map $\psi = \gamma^{-1} \circ \varphi \circ \eta$ fixes the Gauss point.
In this case, the action of $T_x \varphi$ is given by $\widetilde{\psi}$, in the corresponding coordinates.








\subsection{Rescalings and Berkovich periodic points}
Rescalings  of a holomorphic family $\mish{f}$ and type II periodic points of the associated map $\mishl{f}: \pberl \to \pberl$ are closely related.

\begin{proposition}
  \label{pro:2}
  Let $\mish{f}$ be a degree $d \ge 2$ holomorphic family and $\mish{M}$ a moving frame.
Consider the rational map $\mishl{f}: \pberl \to \pberl$ associated to $\mish{f}$ and the automorphism
$\mishl{M}:\pberl \to \pberl$ associated to $\mish{M}$.

Then, for all $\ell \ge 1$, the following assertions are equivalent:
\begin{itemize}
\item[(1)] There exists a rational map $g : \ponec \to \ponec$ 
of degree at least $d'\ge1$ such that, as $t \to 0$, 
$$M^{-1}_t \circ f_t^\ell \circ M_t (z) \to g(z)$$
uniformly on compacts subsets of $ \ponec$ with finitely many points removed.
\item[(2)] $\mishl{f}^\ell (x) = x$ where $x = \mishl{M}(x_g)$ and $\deg_x \mishl{f}^\ell  = d'\ge 1$.
\end{itemize}
In the case that (1) and (2) hold, $T_x \mishl{f}^\ell : \tang{x} \to \tang{x}$ is conjugate via a $\ponec$-isomorphism to $g:\ponec \to \ponec$.
\end{proposition}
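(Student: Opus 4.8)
The plan is to transport the whole statement to the Gauss point by conjugating the $\ell$-th iterate with $\boldsymbol{M}$, and then to read both implications off Proposition~\ref{pro:3} together with Lemma~\ref{lem:12}. First I would record that the assignment sending a holomorphic family to its associated rational map over $\L$ is functorial with respect to composition and inversion: the coefficients of $M_t^{-1}\circ f_t^\ell\circ M_t$ arise from those of $\{M_t\}$ and $\{f_t\}$ by polynomial operations, and taking Taylor series at $t=0$ is a ring homomorphism into $\L$. Since $\psi_t:=M_t^{-1}\circ f_t^\ell\circ M_t$ lies in $\rat^{d^\ell}_\C$ for $t\ne 0$, the collection $\{\psi_t\}$ is a degree $d^\ell$ holomorphic family whose associated $\L$-rational map is $\boldsymbol\psi:=\boldsymbol M^{-1}\circ\boldsymbol f^\ell\circ\boldsymbol M$. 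As $\boldsymbol M$ is a M\"obius transformation, it carries the Gauss point to the type~II point $x=\boldsymbol M(x_g)$, conjugates $\boldsymbol f^\ell$ to $\boldsymbol\psi$, induces a $\ponec$-isomorphism $\tang{x_g}\to\tang{x}$ intertwining $T_{x_g}\boldsymbol\psi$ with $T_x\boldsymbol f^\ell$, and (local degree being multiplicative under composition and equal to $1$ for automorphisms) satisfies $\deg_x\boldsymbol f^\ell=\deg_{x_g}\boldsymbol\psi$. Thus condition (2) is equivalent to: $\boldsymbol\psi$ fixes $x_g$ and $\deg_{x_g}\boldsymbol\psi=d'$.

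Next I would bring in the two key tools. By Proposition~\ref{pro:3}, $\boldsymbol\psi$ fixes $x_g$ if and only if $\widetilde{\boldsymbol\psi}$ is non-constant, in which case $\deg_{x_g}\boldsymbol\psi=\deg\widetilde{\boldsymbol\psi}$ and, after identifying $\tang{x_g}$ with $\ponec$ via reduction, $T_{x_g}\boldsymbol\psi=\widetilde{\boldsymbol\psi}$. On the analytic side, Lemma~\ref{lem:12} applied to $\{\psi_t\}$ gives $\psi_t\to\widetilde{\boldsymbol\psi}$ as $t\to0$, uniformly on compact subsets of the cofinite set $\ponec\setminus\cH(\boldsymbol\psi)$.

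For the implication $(2)\Rightarrow(1)$ I would then argue that if $\boldsymbol f^\ell(x)=x$ with $\deg_x\boldsymbol f^\ell=d'\ge1$, the previous two paragraphs show $\widetilde{\boldsymbol\psi}$ is a rational map of degree $d'$, so in particular non-constant, and Lemma~\ref{lem:12} directly yields $(1)$ with $g=\widetilde{\boldsymbol\psi}$ and the exceptional set $S$ contained in the finite set $\cH(\boldsymbol\psi)$. For $(1)\Rightarrow(2)$, given $\psi_t\to g$ uniformly on compacts of $\ponec\setminus S$ with $\deg g=d'\ge1$, I would rule out $\widetilde{\boldsymbol\psi}$ being constant: otherwise Lemma~\ref{lem:12} would force $\psi_t$ to converge to a constant on the cofinite set $\ponec\setminus\cH(\boldsymbol\psi)$, which meets $\ponec\setminus S$ and contradicts convergence to the non-constant $g$. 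Hence $\widetilde{\boldsymbol\psi}$ is non-constant; comparing the two limits on $\ponec\setminus(S\cup\cH(\boldsymbol\psi))$ forces $g=\widetilde{\boldsymbol\psi}$, and Proposition~\ref{pro:3} then gives $\boldsymbol\psi(x_g)=x_g$ together with $\deg_{x_g}\boldsymbol\psi=\deg\widetilde{\boldsymbol\psi}=\deg g=d'$, i.e.\ $(2)$ by the reduction in the first paragraph. Finally, in either case we have exhibited $g=\widetilde{\boldsymbol\psi}=T_{x_g}\boldsymbol\psi$ under the reduction identification, and the first paragraph identifies $T_x\boldsymbol f^\ell$ with $T_{x_g}\boldsymbol\psi$ up to a $\ponec$-isomorphism, so $T_x\boldsymbol f^\ell$ is conjugate via a $\ponec$-isomorphism to $g$, as claimed.

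The only point I expect to require genuine care is the bookkeeping around degenerate reductions: one must allow $\widetilde{\boldsymbol\psi}$ to be constant, possibly identically $\infty$, and apply Lemma~\ref{lem:12} strictly in the form stated there (reduction of a quotient of polynomials in $\mathfrak{O}_\L[z]$, with the convention $\widetilde{\boldsymbol\psi}\equiv\infty$ when the denominator reduces to $0$). The uniqueness-of-limits argument above is precisely what dispatches this case cleanly; everything else is a mechanical transfer through the dictionary ``holomorphic family $\leftrightarrow$ $\L$-rational map, and $t\to0\leftrightarrow$ reduction'' already encoded in Lemma~\ref{lem:12} and Proposition~\ref{pro:3}.
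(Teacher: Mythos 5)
Your proof is correct and follows essentially the same route as the paper: conjugate $\boldsymbol f^\ell$ by $\boldsymbol M$ to reduce everything to the Gauss point, then read both implications off Lemma~\ref{lem:12} and Proposition~\ref{pro:3}. You are slightly more explicit than the paper in ruling out a constant reduction $\widetilde{\boldsymbol\psi}$ in the $(1)\Rightarrow(2)$ direction (via uniqueness of the limit on a cofinite set), but this is the same argument the paper leaves implicit.
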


\begin{proof}
(1) $\implies$ (2). Note that the rational map associated to $\{M^{-1}_t \circ f_t^\ell \circ M_t\}$ is $\mishl{F}=\mishl{M}^{-1} \circ  \mishl{f}^\ell \circ \mishl{M}$.
From Lemma~\ref{lem:12} the reduction of $\mishl{F}$ has degree at least $d'\ge 1$. Thus, $\mishl{F} (x_g) = x_g$ by Proposition~\ref{pro:3}, and $\deg_{x_g} \mishl{F} =d'$.
It follows that $\mishl{f}^\ell (\mishl{M}(x_g)) = \mishl{M}(x_g)$. Moreover, $\deg_x \mishl{f} = \deg_{x_g} \mishl{F}$ for $x = \mishl{M}(x_g)$ since 
the local degree remains unchanged under pre and post-composition by automorphisms.

(2) $\implies$ (1). From Lemma~\ref{lem:12}, as $t \to 0$, outside a finite set,  $M^{-1}_t \circ f_t^\ell \circ M_t (z) \to \widetilde{\mishl{F}}$ where $\mishl{F}=\mishl{M}^{-1} \circ  \mishl{f}^\ell \circ \mishl{M}$. By Proposition~\ref{pro:3},  $\widetilde{\mishl{F}}$ has degree $\deg_x \mishl{f}^\ell \ge 1$, since $\mishl{f}^\ell (x) = x = \mishl{M}(x_g)$.
\end{proof}

\begin{corollary}
  \label{cor:3}
  Given  a degree $d \ge 2$ holomorphic family $\mish{f}$ and  a moving frame $\mish{M}$, consider the 
subset $S$ of $\N$ formed by all integers $\ell$ such that (1) of Proposition~\ref{pro:2} holds for some $g$ of with $\deg g \ge 2$.
Then $S$ is empty or $S = q \cdot \N$ for some $q \ge 1$.
\end{corollary}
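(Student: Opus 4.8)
The plan is to reformulate the definition of $S$, via Proposition~\ref{pro:2}, as a condition on the $\mishl{f}$-orbit of the type II point $x := \mishl{M}(x_g) \in \pberl$, and then to invoke the multiplicativity of the local degree along orbits.

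First I would record, using Proposition~\ref{pro:2}, that for each $\ell \ge 1$ condition (1) of that proposition holds for some $g$ of degree $d'$ if and only if $\mishl{f}^\ell(x) = x$ and $\deg_x\mishl{f}^\ell = d'$, and that in this case $g$ is conjugate by a $\ponec$-isomorphism to the tangent map $T_x\mishl{f}^\ell$. Conjugating $\mishl{f}^\ell$ by a M\"obius transformation so that it fixes the Gauss point, parts (1) and (2) of Proposition~\ref{pro:3} identify $\deg g = \deg T_x\mishl{f}^\ell$ with $\deg_x\mishl{f}^\ell$ (the local degree and the degree of the tangent map being unchanged under pre- and post-composition by automorphisms). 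Hence $S = \{\, \ell \ge 1 \mid \mishl{f}^\ell(x) = x \text{ and } \deg_x\mishl{f}^\ell \ge 2 \,\}$. If $x$ is not periodic under $\mishl{f}$, this set is empty and there is nothing left to prove; so I would then assume $x$ is periodic, of exact period $p \ge 1$, which forces $p \mid \ell$ for every $\ell \in S$, i.e.\ $S \subseteq p \cdot \N$.

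The decisive step is to apply multiplicativity of the local degree, $\deg_x(\varphi\circ\psi) = \deg_{\psi(x)}(\varphi)\cdot\deg_x(\psi)$ (see~\cite[Section~4.6]{JonssonDynBer} or~\cite[Section~9]{BakerRumelyBook}), which gives $\deg_x\mishl{f}^n = \prod_{i=0}^{n-1}\deg_{\mishl{f}^i(x)}\mishl{f}$ for all $n \ge 1$. Since the orbit of $x$ has period $p$, the factors of this product repeat with period $p$, so setting $D := \deg_x\mishl{f}^p = \prod_{i=0}^{p-1}\deg_{\mishl{f}^i(x)}\mishl{f} \ge 1$ one gets $\deg_x\mishl{f}^{kp} = D^k$ for every $k \ge 1$. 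If $D = 1$, then $\deg_x\mishl{f}^{kp} = 1$ for all $k$, so $S = \emptyset$; if $D \ge 2$, then $\deg_x\mishl{f}^{kp} = D^k \ge 2$ for all $k \ge 1$, so $S = p \cdot \N$ and the assertion holds with $q = p$.

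I do not expect any real obstacle: once Proposition~\ref{pro:2} is in hand, the content is simply that the local degree of $\mishl{f}$ along a periodic orbit, sampled at multiples of the period, is a perfect power $D^k$, hence either identically $1$ or at least $2$. The only point needing a little care is the bookkeeping in the first step, namely the identification $\deg g = \deg_x\mishl{f}^\ell$, which is immediate from the closing statement of Proposition~\ref{pro:2} together with Proposition~\ref{pro:3}.
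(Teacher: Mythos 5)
Your proof is correct and takes essentially the same route as the paper: translate membership in $S$ into a condition on the Berkovich orbit of $x = \mishl{M}(x_g)$ via Proposition~\ref{pro:2}, then use multiplicativity of the local degree along the periodic orbit. The paper's own proof is terser and leaves the multiplicativity step (showing $\deg_x\mishl{f}^{kq}\ge 2$ for all $k\ge 1$ once some multiple of the period lies in $S$) implicit, whereas you spell it out; the underlying argument is the same.
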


\begin{proof}
Let $x = \mishl{M}(x_g)$. From the previous proposition, 
  if $\ell \in S$, then 
 $\mishl{f}^\ell (x) = x$ and $\deg_x \mishl{f}^\ell \ge 2$.
Thus $x$ is periodic under $\mishl{f}$, say of period $q$, and $S = q \cdot \N$.
\end{proof}

\subsection{Equivalent moving frames and dynamically dependent rescalings}
\label{sec:equiv-moving-fram}
Equivalent moving frames from the viewpoint of Berkovich projective line are characterized as follows.

\begin{lemma}
  \label{lem:3}
  Let $\mish{M}$ and $\mish{L}$ be moving frames. 
Denote by  $\mishl{M}$ and  $\mishl{L}$ (resp.) the associated M\"oebius 
transformations, acting on $\pberl$.
The moving frames $\mish{M}$ and $\mish{L}$ are equivalent if and only if 
$\mishl{M}(x_g) = \mishl{L}(x_g)$. 
\end{lemma}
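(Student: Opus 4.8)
The plan is to prove Lemma~\ref{lem:3} by reducing the equivalence relation on moving frames, stated in terms of the $t\to 0$ limit of the composition $M_t^{-1}\circ L_t$, to the condition $\mishl{M}(x_g)=\mishl{L}(x_g)$ on the associated Berkovich automorphisms, via the dictionary already established: the associated M\"obius map of a composition of families is the composition of the associated maps (as in the proof of Proposition~\ref{pro:2}), and Lemma~\ref{lem:12} together with Proposition~\ref{pro:3} relates the limit as $t\to 0$ to reduction and to fixing the Gauss point. Writing $\mishl{N}=\mishl{M}^{-1}\circ\mishl{L}$, which is the M\"obius transformation over $\L$ associated to the family $\{M_t^{-1}\circ L_t\}$, the statement $\mishl{M}(x_g)=\mishl{L}(x_g)$ is equivalent to $\mishl{N}(x_g)=x_g$, i.e. to $\mishl{N}\in\operatorname{PGL}(2,\mathfrak{O}_\L)$, by the transitivity discussion and the identification of the stabilizer of $x_g$ as $\operatorname{PGL}(2,\mathfrak{O}_\L)$. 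So the whole lemma comes down to: the moving frames $\mish{M}$ and $\mish{L}$ are equivalent if and only if $\mishl{N}=\mishl{M}^{-1}\circ\mishl{L}\in\operatorname{PGL}(2,\mathfrak{O}_\L)$.

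For the forward direction I would argue as follows. If $\mish{M}$ and $\mish{L}$ are equivalent, then by Definition~\ref{def:4} there is $M\in\rat^1_\C$ with $M_t^{-1}\circ L_t\to M$ as $t\to 0$. The family $\{M_t^{-1}\circ L_t\}$ is then a \emph{non-degenerate} holomorphic family of degree $1$ (its limit $M$ is a genuine M\"obius transformation), so its associated map $\mishl{N}$ has reduction $\widetilde{\mishl{N}}=M$ of degree $1$ by Lemma~\ref{lem:12}; by Proposition~\ref{pro:3} this forces $\mishl{N}(x_g)=x_g$, hence $\mishl{M}(x_g)=\mishl{N}^{-1}$... more directly $\mishl{L}(x_g)=\mishl{M}(\mishl{N}(x_g))=\mishl{M}(x_g)$. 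Concretely one can also see this at the level of coefficients: writing $M_t^{-1}\circ L_t=[\alpha(t)z+\beta(t):\gamma(t)z+\delta(t)]$ with the entries holomorphic and the $2\times 2$ matrix invertible for $t\neq 0$, convergence to a nondegenerate $M$ means that after clearing a common holomorphic factor the entries $\boldsymbol{\alpha},\boldsymbol{\beta},\boldsymbol{\gamma},\boldsymbol{\delta}\in\L$ all lie in $\mathfrak{O}_\L$ and the determinant $\boldsymbol{\alpha}\boldsymbol{\delta}-\boldsymbol{\beta}\boldsymbol{\gamma}$ is a unit; that is exactly $\mishl{N}\in\operatorname{PGL}(2,\mathfrak{O}_\L)$.

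For the converse, suppose $\mishl{N}=\mishl{M}^{-1}\circ\mishl{L}\in\operatorname{PGL}(2,\mathfrak{O}_\L)$, i.e. $\mishl{N}$ fixes $x_g$, so $\widetilde{\mishl{N}}\in\operatorname{PGL}(2,\C)$ has degree $1$. The map $\mishl{N}$ is the M\"obius transformation over $\L$ associated to the degree-$1$ family $\{M_t^{-1}\circ L_t\}$, and by Lemma~\ref{lem:12} we get $M_t^{-1}\circ L_t\to\widetilde{\mishl{N}}$ uniformly on compact subsets of $\ponec\setminus\cH(\mishl{N})$; but $\cH(\mishl{N})$ is empty precisely because $\widetilde{\mishl{N}}$ has the same degree $1$ as $\mishl{N}$ (the g.c.d.\ $H$ of numerator and denominator is constant), so the convergence is uniform on compact subsets of all of $\ponec$. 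Setting $M:=\widetilde{\mishl{N}}\in\rat^1_\C$, this says $M_t^{-1}\circ L_t\to M$, which is the definition of $\mish{M}$ and $\mish{L}$ being equivalent.

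The only slightly delicate point is bookkeeping about the g.c.d.\ $H=\cH(\mishl{N})$ and the representative of $\mishl{N}$ in $\mathfrak{O}_\L[z]$ with a unit coefficient: one must observe that a M\"obius transformation fixing $x_g$ has a representative matrix in $\operatorname{GL}(2,\mathfrak{O}_\L)$ with unit determinant, so its reduction is again a M\"obius transformation and no holes appear — this is exactly the content of Proposition~\ref{pro:3} applied with $\widetilde\varphi$ of degree $1$, so there is really no obstacle, just care in invoking the already-established reduction formalism. I expect the ``hard'' part to be purely expository: phrasing the chain $\mish{M}\sim\mish{L}\iff \mishl{N}\in\operatorname{PGL}(2,\mathfrak{O}_\L)\iff \mishl{N}(x_g)=x_g\iff\mishl{M}(x_g)=\mishl{L}(x_g)$ cleanly, since every individual implication is immediate from Lemma~\ref{lem:12}, Proposition~\ref{pro:3}, and the transitivity/stabilizer discussion preceding the lemma.
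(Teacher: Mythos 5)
Your proof is correct and is essentially the same argument the paper uses: reduce equivalence of the frames $\mish{M}$, $\mish{L}$ to the statement that the reduction of $\mishl{N}=\mishl{M}^{-1}\circ\mishl{L}$ has degree $1$ via Lemma~\ref{lem:12}, and then to $\mishl{N}(x_g)=x_g$ via Proposition~\ref{pro:3}. The paper states this chain in one line; you supply the same steps with a bit more bookkeeping, including the equivalent reformulation through the stabilizer $\operatorname{PGL}(2,\mathfrak{O}_\L)$, which does not change the substance.
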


\begin{proof}
  By Definition~\ref{def:4}, $\mish{M}$ and $\mish{L}$ are equivalent if and only if 
$M_t^{-1} \circ L_t \to M$ for some M\"obius transformation $M \in \rat^1_\C$. 
By Lemma~\ref{lem:12}, this occurs if and only if the reduction of  $\mishl{M}^{-1} \circ \mishl{L}$
has degree $1$ which, in view of Proposition~\ref{pro:3}, occurs if and only if $\mishl{M}^{-1} \circ \mishl{L} (x_g) = x_g$
and the lemma follows.
\end{proof}

We conclude that the  equivalence classes $\cB$ of moving frames are naturally embedded into the set of type II points of  $\pberl$. That is, we have a well defined and injective map:
$$\begin{array}{cccc}
  \iota:& \cB & \to & \pberl \\
  &[\{M_t\}] & \mapsto & \mishl{M}(x_g).
\end{array}
$$
Moreover, our next result shows that $\iota (\cB)$ is invariant under the action associated to holomorphic families.

\begin{lemma}
  \label{lem:5}
  Let $\mish{f}$ be a holomorphic family of degree $d \ge 1$.
  If $\mish{M}$ is a moving frame, then there exists a moving frame $\mish{L}$
such that the following equivalent conditions hold:
\begin{enumerate}
  \item $\mishl{f} \circ \mishl{M} (x_g) = \mishl{L}(x_g)$.
  \item $L^{-1}_t \circ f_t \circ M_t (z) \to g (z),$ 
for some non-constant rational map $g : \ponec \to \ponec$, with uniform convergence on compact subsets outside some finite set.
  \end{enumerate}
In this case, the reduction of $\mishl{L}^{-1} \circ \mishl{f} \circ \mishl{M}$ coincides with $g$.
\end{lemma}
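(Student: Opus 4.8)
The plan is to produce the moving frame $\mish{L}$ directly from the non-Archimedean picture, and then read off condition (2) as a consequence of Lemma~\ref{lem:12}, with Lemma~\ref{lem:3} supplying well-definedness. First I would set $x = \mishl{M}(x_g)$, a type II point of $\pberl$, and let $y = \mishl{f}(x)$. Since $\mishl{f}$ has degree $d \ge 1$ it preserves the type of points (as recalled from \cite{JonssonDynBer,BakerRumelyBook}), so $y$ is again type II. Because the group $\operatorname{PGL}(2,\L)$ of M\"obius transformations acts transitively on type II points, there is an automorphism $\mishl{L}$ with $\mishl{L}(x_g) = y$; moreover $\mishl{L}$ can be taken to be the associated map of an honest moving frame $\mish{L}$ (a type II point corresponds to a closed ball $B \subset \L$ with radius in $|\L^\times| = \exp(\Q)$; pick a center in $\puiseux$ — dense in $\L$ — and a radius $\exp(-p/m)$, so the affine map $z \mapsto t^{p/m} z + (\text{center})$ has coefficients realized by convergent Puiseux series, hence by genuine holomorphic germs, possibly after the usual substitution $t \mapsto t^m$). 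This gives assertion (1): $\mishl{f} \circ \mishl{M}(x_g) = \mishl{L}(x_g)$.

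Next I would pass to the ``straightened'' map $\mishl{F} = \mishl{L}^{-1} \circ \mishl{f} \circ \mishl{M}$, which by (1) fixes the Gauss point $x_g$. By Proposition~\ref{pro:3}, $\mishl{F}(x_g) = x_g$ forces $\deg \widetilde{\mishl{F}} \ge 1$, i.e.\ the reduction $\widetilde{\mishl{F}}$ is a non-constant rational map $g : \ponec \to \ponec$. Now apply Lemma~\ref{lem:12} to the holomorphic family $\{ L_t^{-1} \circ f_t \circ M_t \}$, whose associated rational map over $\L$ is precisely $\mishl{F} = \mishl{L}^{-1} \circ \mishl{f} \circ \mishl{M}$ (the reduction construction commutes with composition): it yields $L_t^{-1} \circ f_t \circ M_t \to \widetilde{\mishl{F}} = g$ uniformly on compact subsets of $\ponec \setminus \cH(\mishl{F})$, a finite set. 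That is exactly assertion (2), and it also proves the final sentence of the statement, that the reduction of $\mishl{L}^{-1} \circ \mishl{f} \circ \mishl{M}$ equals $g$.

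It remains to verify that conditions (1) and (2) are equivalent, so that the choice of representative $\mish{L}$ only matters up to equivalence. The implication (1) $\Rightarrow$ (2) is the argument above. For (2) $\Rightarrow$ (1): if $L_t^{-1} \circ f_t \circ M_t \to g$ with $g$ non-constant uniformly off a finite set, then by Lemma~\ref{lem:12} the reduction of $\mishl{F} = \mishl{L}^{-1} \circ \mishl{f} \circ \mishl{M}$ is non-constant, hence by Proposition~\ref{pro:3} $\mishl{F}$ fixes $x_g$, which rearranges to $\mishl{f} \circ \mishl{M}(x_g) = \mishl{L}(x_g)$. Finally, Lemma~\ref{lem:3} shows the point $\mishl{L}(x_g)$ is independent of the representative of $[\mish{L}]$, so the construction is well defined on $\cB$.

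The main obstacle I anticipate is the realization step: knowing that $y = \mishl{f}(\mishl{M}(x_g))$ is an arbitrary type II point, one must exhibit a \emph{holomorphic family} $\mish{L}$ — not merely a M\"obius transformation over $\L$ — whose associated automorphism sends $x_g$ to $y$. This is where density of $\puiseux$ in $\L$ and the fact that $\puiseux = \varinjlim \C((t^{1/m}))$ (so every Puiseux series is an honest meromorphic germ in $t^{1/m}$, and the coordinate change $t \mapsto t^m$ is harmless for all our limit statements) do the work; once a center and a radius in the value group are fixed one writes $L_t(z) = t^{p/m} z + c(t)$ explicitly and checks it is a moving frame. Everything else is bookkeeping with Lemma~\ref{lem:12}, Proposition~\ref{pro:3}, and Lemma~\ref{lem:3}.
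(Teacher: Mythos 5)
Your strategy — set $y = (\mishl{f}\circ\mishl{M})(x_g)$, use transitivity of $\operatorname{PGL}(2,\L)$ on type II points to find $\mishl{L}$ with $\mishl{L}(x_g) = y$, and then realize $\mishl{L}$ by an honest moving frame — differs from the paper's, which constructs $\mish{L}$ directly from the coefficients of $f_t\circ M_t$ by composing maps of the special forms $z\mapsto t^m z$ and $z\mapsto (z-c_0)/t^{\ell_0}$ with $c_0\in\C$ and $m,\ell_0\geq 0$, so that the resulting frame is visibly holomorphic in $t$. Your route is fine in outline, but the realization step — which you correctly single out as the main obstacle — has a genuine gap as written. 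You allow the radius of $y$ to be $\exp(-p/m)$ with $m>1$ and propose the substitution $t\mapsto t^m$. That substitution replaces $\mish{f}$ by the \emph{different} holomorphic family $\{f_{t^m}\}$, and a frame holomorphic in $t^{1/m}$ is not a moving frame in $t$; this does not prove the Lemma as stated. Moreover, the claim that a center in $\puiseux$ yields ``convergent Puiseux series, hence genuine holomorphic germs'' is false: $\puiseux$ consists of \emph{formal} Puiseux series, which need not converge, so density alone does not produce a holomorphic frame.

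Both difficulties vanish with a closer look, but they must be addressed. The coefficients of $\mishl{f}$ and $\mishl{M}$ lie in $\laurent$ (they are Taylor series of holomorphic functions), and $|\laurent^\times| = \exp(\Z)$. Writing $\mishl{f}\circ\mishl{M} = P/Q$ with $P, Q \in \laurent[z]$ and decomposing any $a\in\L$ into its integer and fractional $t$-power parts $a=a_{\mathrm{int}}+a_{\mathrm{frac}}$, one has $\max_j|p_j-aq_j| = \max\bigl(\max_j|p_j-a_{\mathrm{int}}q_j|,\ |a_{\mathrm{frac}}|\max_j|q_j|\bigr)$; minimizing over $a$ shows that $y$ always has radius in $\exp(\Z)$ and admits a center $a_{\mathrm{int}}\in\laurent$, so no reparametrization is needed. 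Finally, since any rigid point in the ball serves as a center, one may replace $a_{\mathrm{int}}$ by a truncation in $\C[t,t^{-1}]$, making $L_t(z)=t^p z+c(t)$ a genuine moving frame. With this supplied, the remainder of your argument — the equivalence of (1) and (2) via Lemma~\ref{lem:12} and Proposition~\ref{pro:3}, and well-definedness via Lemma~\ref{lem:3} — is correct and matches the paper.
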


\begin{proof}
  The fact that conditions (1) and (2) are equivalent follows at once from Lemma~\ref{lem:12} and Proposition~\ref{pro:3}.

 To prove the existence of the moving frame $\mish{L}$ consider the holomorphic family  $\{ g_t = f_t \circ M_t \}$. 
It is sufficient to show that there exists a moving frame $\mish{N}$ such that
$\mishl{N} \circ \mishl{g} (x_g) = x_g$.
Write $g_t(z)$ as  $P_t (z) / Q_t (z)$ where 
\begin{eqnarray*}
  P_t(z) &=& a_0 (t) z^d + \cdots  + a_d (t),\\
  Q_t(z) &=& b_0 (t) z^d + \cdots  + b_d (t),
\end{eqnarray*}
for some holomorphic functions $a_j(t), b_j(t)$ defined on a neighborhood of $t=0$.
Taking $A_t (z) = t^m z$ or $z/t^m $ for some $m \ge 0$, and replacing $g_t$ by $A_t \circ g_t$, we may assume
that $a_j (0) \neq 0$ for some $j$ and $b_\ell (0) \neq 0$ for some $\ell$.
It follows that,
$\widetilde{\mishl{P}} \neq 0$ and $\widetilde{\mishl{Q}} \neq 0 $. 
If $\widetilde{\mishl{P}} \neq c \cdot \widetilde{\mishl{Q}}$ for all $c \in \C$, then 
$\widetilde{\mishl{g}}$ has degree at least $1$ and, from Proposition~\ref{pro:3}, we have that ${\mishl{g}} (x_g) = x_g$ as required.
Otherwise, $\widetilde{\mishl{P}} = c \cdot \widetilde{\mishl{Q}}$ for some $c \in \C$. Hence, all the coefficients
of $P_t (z) - c Q_t (z)$ vanish at $t=0$. Take $c_0 \in \C$ such that $\max \{ | \mishl{a_j} - c_0 \mishl{b_j}| \}$ attains its minimum.
Equivalently, given  $c \in \C$ let $\ell_c \ge 0$ be the largest integer such that  $a_j (t) - c b_j(t)$ vanishes at $t=0$ with order at least $\ell_c$, for all $j$.
Such $\ell_c$ exists since $g_t$ has degree $d$ for all $t \neq 0$. 
It follows that we may choose $c_0$ such that the corresponding $\ell_{c_0}$ is maximal. Now let $N_t (z) = (z- c_0)/t^{\ell_0}$ and observe that
 $\mishl{N} \circ \mishl{g} = (\mishl{P} - c_0 \mishl{Q})/(t^{\ell_0} \mishl{Q})$ has reduction of degree at least $1$.
Thus, from Proposition~\ref{pro:3}, we have that $\mishl{N} \circ {\mishl{g}} (x_g) = x_g$ as required. 
\end{proof}

By Lemma~\ref{lem:3}, the moving frame $\mish{L}$ of the previous lemma is unique modulo equivalence.
Moreover, $[\mish{L}]$ is independent of the representative of
 $[\mish{M}]$.  We write $$\mish{f} ([\mish{M}]) = [\mish{L}].$$ 
Note that $$\mish{f} ( \mish{g} ([\mish{M}])) = \{f_t \circ g_t \} ([\mish{M}]),$$
for all holomorphic families $\mish{g}$.
Also,
$$\mishl{f} \circ \iota ([\mish{M}]) = \iota \circ \mish{f} ([\mish{M}])$$ 
for all $[\mish{M}] \in \cB$.

Thus, as anticipated in the introduction, we may talk about orbits of equivalence classes of moving frames under $\mish{f}$.
Rescalings within the same periodic orbit will be regarded as the same rescaling for $\mish{f}$.

\begin{definition}
  \label{def:8}
  Consider a holomorphic family $\mish{f}$. Assume that $\mish{M}$ and $\mish{L}$ are rescalings for $\mish{f}$.
We say that $\mish{M}$ and $\mish{L}$ are {\sf dynamically dependent rescalings} if $\mish{f^\ell} ([\mish{M}]) = [\mish{L}]$, 
for some $\ell \ge 0$.
\end{definition}

Let us take a closer look to  rescalings associated to  dynamically dependent rescalings.
That is, assume that the moving frame $\mish{M}$ is a rescaling of period $q$ for  $\mish{f}$.
For $\ell=0, \dots, q-1$ let $\mish{M^{(\ell)}}$ be a representative of $\mish{f^\ell} ([\mish{M}])$, subscripts modulo $q$.
Denote by
 $g_0, \cdots, g_{q-1}$ (subscripts modulo $q$) the non-constant complex rational maps such that
$$\left(M^{(\ell)}_t \right)^{-1} \circ f_t \circ M^{(\ell-1)}_t \to g_{\ell-1}$$
as $t \to 0$, outside a finite set.
It follows that for all $\ell$, the moving frame $\mish{M^{(\ell)}}$ is a rescaling of period $q$ for $\mish{f}$ with rescaling limit
$$G_\ell =  g_{\ell-1} \circ \dots \circ g_{\ell+1} \circ g_{\ell}.$$
Note that the degree of $G_\ell$ is independent of $\ell$. Also, either $G_\ell$ is  postcritically finite for all $\ell$, or none of these maps
$G_\ell$ is postcritically finite.

\medskip
We will deduce Theorem~\ref{ithr:3}
from a result  regarding Berkovich space dynamics.
In order to make this deduction it is convenient to record 
a direct consequence  of the above discussion.

\begin{corollary}
  \label{cor:1}
Consider a holomorphic family $\mish{f}$ of degree $d \ge 2$.
If $\mish{M'}$ and $\mish{M}$ are dynamically independent rescalings,
then  the periodic orbits of $x'=\iota([\mish{M'}])$ and $x = \iota([\mish{M}])$, under iterations
of  $\mishl{f}$, are distinct.
\end{corollary}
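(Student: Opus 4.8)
The plan is to prove the contrapositive: if the periodic orbits of $x = \iota([\mish{M}])$ and $x' = \iota([\mish{M'}])$ under $\mishl{f}$ coincide, then $\mish{M}$ and $\mish{M'}$ are dynamically dependent in the sense of Definition~\ref{def:8}. The whole argument is a translation through the dictionary between equivalence classes of moving frames and type II points of $\pberl$, so there is no real analytic content; the only thing to watch is that the various actions are consistently defined.

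First I would note that, since $\mish{M}$ and $\mish{M'}$ are rescalings, Corollary~\ref{cor:3} (or directly Proposition~\ref{pro:2}) guarantees that $x$ and $x'$ are periodic points of $\mishl{f}$; let $q$ be the period of $x$. If the orbits of $x$ and $x'$ agree as subsets of $\pberl$, then $x'$ lies on the cycle of $x$, hence $x' = \mishl{f}^\ell(x)$ for some $0 \le \ell \le q-1$. Because $\mishl{f}\circ\iota = \iota\circ\mish{f}$ (the intertwining relation recorded just after Lemma~\ref{lem:5}), the forward orbit of $x$ stays inside $\iota(\cB)$, so $\mish{f}^\ell([\mish{M}])$ is well defined; iterating the same relation, together with the composition rule $\mish{f}(\mish{g}([\mish{M}])) = \{f_t\circ g_t\}([\mish{M}])$, gives $\mishl{f}^\ell\circ\iota = \iota\circ\mish{f}^\ell$ on $\cB$.

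Applying this to $[\mish{M}]$ yields
$$\iota([\mish{M'}]) = x' = \mishl{f}^\ell(\iota([\mish{M}])) = \iota(\mish{f}^\ell([\mish{M}])).$$
Since $\iota$ is injective by Lemma~\ref{lem:3}, we conclude $[\mish{M'}] = \mish{f}^\ell([\mish{M}])$, which is precisely the condition of Definition~\ref{def:8} for the two rescalings $\mish{M}$ and $\mish{M'}$ to be dynamically dependent. This contradicts the hypothesis that $\mish{M}$ and $\mish{M'}$ are dynamically independent, and the corollary follows. The only step meriting attention is the bookkeeping that makes $\mish{f}^\ell([\mish{M}])$ meaningful and compatible with $\mishl{f}^\ell$ on $\iota(\cB)$, which is handled by the invariance $\mishl{f}(\iota(\cB)) \subseteq \iota(\cB)$ noted above.
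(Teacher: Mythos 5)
Your proof is correct and coincides with what the paper intends, since the paper presents Corollary~\ref{cor:1} as ``a direct consequence of the above discussion'' — namely, the injectivity of $\iota$, the intertwining $\mishl{f}\circ\iota = \iota\circ\mish{f}$, and Definition~\ref{def:8} — and your contrapositive argument simply unwinds exactly those facts.
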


\section{Dynamics in the Berkovich projective line and rescalings}
\label{sec:ber2res}

The previous section stresses the importance of type II periodic orbits in the study rescaling limits.
Here we will be concerned with the critical points in the ``basin'' of these periodic orbits.
More precisely, given a rational map $\varphi : \pberl \to \pberl$ of degree $d \ge 2$ with a periodic orbit $\cO$, we say that
 the {\sf basin of $\cO$} is the interior of the set of
points $x \in \pberl$ such that, for all neighborhoods $U$ of $\cO$,
the orbit of $x$ is eventually contained in $U$ (i.e. there exists
$n_U$ such that $\varphi^n(x) \in U$ for all $n \ge n_U$).

Periodic orbits of rational maps $\varphi : \pberl \to \pberl$ may be 
classified according to whether they belong to the ``Julia set'' $J(\vphi)$ or to the ``Fatou set'' $F(\vphi)$.
A point $x \in \pberl$ lies in the {\sf Julia set} if
 for all neighborhoods $U$ of $x$ we have that $\cup \varphi^n(U)$ omits at most two points of $\pberl$.
As usual, the complement of the Julia set, is the Fatou set. (See~\cite[Section~10.5]{BakerRumelyBook}.)
A periodic point $x \in \pberl \setminus \ponel$ of period $q$ lies in the Julia set $J(\varphi)$ if either 
$\deg_x \vphi^q \ge 2$ or there exist a bad direction of $\vphi^q$ at $x$ with infinite forward orbit under $T_x \vphi^q$ (see~\cite[Theorem~2.1]{KiwiPuiseuxQuadratic}). 
In the first case, $x$ is called a {\sf repelling periodic point}. In the latter case, when $\deg T_x \varphi =1$, we say that 
$x$ is an {\sf indifferent periodic point}.
Every non-rigid periodic point that belongs to the Julia set is of type II (see ~\cite[Section~10.7]{BakerRumelyBook}).
It is worth to emphasize, that despite its name, every repelling periodic point $x$ of type II, has non-empty basin.
In fact, without loss of generality assume that $x$ is fixed under $\varphi$.
Let $v \in T_x \pberl$ be a good direction such that its iterates under $T_x \varphi$ are pairwise distinct good directions.
Such $v$ always exists since $\deg T_x \varphi \ge 2$ and the residue field is uncountable. It follows $\{ \varphi^n (U_v) \}$ are pairwise disjoint Berkovich open disks 
with boundary point $x$. Hence, every point in $U_v$ belongs to the basin of $x$. 

According to the previous section, rescalings correspond to repelling periodic orbits. Our next result
explores under which conditions the basin of a periodic orbit does not contain rigid critical points.

\begin{theorem}
\label{FinitelyManyNonPCFTheorem}
Consider a  rational map $\vphi : \pberl \to \pberl$ with $\deg
\vphi \geq 2$.  
Let $\cO$ be a type II periodic orbit of period $q \ge 1$ of $\vphi$.
Assume that the basin of $\cO$ is free of type I critical points.
Then, for all $x \in \cO$, every bad direction of $\vphi^q$ at $x$ has finite forward orbit under $T_x \vphi^q$.
Moreover, if $\deg_x \vphi^q \ge 2$, then
 $T_x \vphi^q$ is postcritically finite.  
\end{theorem}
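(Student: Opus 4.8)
The plan is to work at a single point $x \in \cO$, reducing immediately to the case $x$ is fixed by $\psi := \vphi^q$ (by passing to this iterate; note the basin of $\cO$ for $\vphi$ is contained in the basin of $\{x\}$ for $\psi$, so it remains free of type~I critical points). After conjugating by a M\"obius transformation sending $x$ to the Gauss point $x_g$, we may assume $\psi(x_g) = x_g$, so by Proposition~\ref{pro:3} the reduction $\widetilde\psi$ has degree $\deg_{x_g}\psi \ge 1$ and $T_{x_g}\psi = \widetilde\psi$ under the identification $T_{x_g}\pberl \cong \ponec$. The key dynamical mechanism to exploit is the one recalled just before the theorem: if $v \in T_{x_g}\pberl$ is a direction whose forward $T_{x_g}\psi$-orbit consists of pairwise distinct \emph{good} directions, then $\{\psi^n(U_v)\}$ are pairwise disjoint Berkovich open discs with boundary $x_g$, and every rigid point of $U_v$ lies in the basin of $x_g$. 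So any such direction $v$ that also contains a rigid critical point of $\psi$ (equivalently, by Proposition~\ref{pro:3}(3)--(4) and the chain rule, a point where $\deg_\cdot \psi \ge 2$) would violate the hypothesis.

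The first step is to handle bad directions. A bad direction $w$ of $\psi$ at $x_g$ is, by Proposition~\ref{pro:3}(3), exactly an element of $\cH(\psi)$; and over such a $w$ the map $\psi\colon U_w \to \pberl$ is surjective, hence $U_w$ must contain a rigid point mapping to any prescribed rigid point. More to the point, surjectivity of $\psi|_{U_w}$ onto all of $\pberl$ forces the restriction to have ``extra'' degree: indeed $U_w$ contains a rigid critical point of $\psi$ unless $\deg(\psi|_{U_w})=1$, which cannot happen for a bad direction since then $\psi(U_w)$ would be a proper disc, not all of $\pberl$. I would make this precise via Proposition~\ref{pro:3}(3): every point of $U_{w'}$ has $\delta + \deg_w\widetilde\psi$ preimages in $U_w$ where $w' = \widetilde\psi(w)$, and a Riemann--Hurwitz count inside $U_w$ then produces a rigid critical point of $\psi$ lying in $U_w$ whenever $w$ is bad. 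Now suppose, for contradiction, that some bad direction $w$ at $x_g$ has infinite $T_{x_g}\psi = \widetilde\psi$-orbit. Then I claim its orbit eventually consists of good directions through which the critical point produced above can escape into disjoint discs: because $\widetilde\psi$ is a degree-$\ge 1$ rational self-map of $\ponec$ with finitely many critical points and finitely many points in $\cH(\psi)$, the infinite $\widetilde\psi$-orbit of $w$ passes through a good direction $v$ infinitely often and visits any fixed finite set only finitely often; choosing $v$ late enough in the orbit so that all its forward iterates are distinct good directions, the disc $U_v$ lies in the basin of $x_g$, yet $U_v$ contains (the forward image, or by taking preimages, a rigid critical point of $\psi$) — contradicting the hypothesis. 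This shows every bad direction has finite forward orbit under $\widetilde\psi$, i.e.\ under $T_x\vphi^q$.

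For the ``moreover'': assume $\deg_x\psi = \deg\widetilde\psi \ge 2$. A critical point of $T_x\psi = \widetilde\psi$ on $\ponec$ is a direction $v$ with $\deg_v\widetilde\psi \ge 2$; by Proposition~\ref{pro:3}(3)--(4), such a $v$ is either bad (a hole) — already handled, finite orbit — or good with $\deg(\psi\colon U_v \to U_{\widetilde\psi(v)}) = \deg_v\widetilde\psi \ge 2$, which forces a rigid critical point of $\psi$ inside $U_v$. In the good case, if the forward $\widetilde\psi$-orbit of $v$ were infinite, then (again using that $\widetilde\psi$ has only finitely many critical values and $\cH(\psi)$ is finite) all sufficiently late iterates of $v$ are good and pairwise distinct, so their union of discs is disjoint and lies in the basin of $x_g$; but this union contains the forward orbit of the rigid critical point found in $U_v$, contradicting freeness of type~I critical points in the basin. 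Hence every critical point of $\widetilde\psi = T_x\vphi^q$ has finite forward orbit, i.e.\ $T_x\vphi^q$ is postcritically finite.

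\textbf{Main obstacle.} The delicate point is the bookkeeping that turns ``infinite orbit of a direction'' into ``a rigid critical point whose \emph{entire forward orbit} sits inside a disjoint family of discs in the basin.'' One must be careful that the rigid critical point supplied by Proposition~\ref{pro:3}(3)--(4) lies in the \emph{correct} disc — the one whose iterates are eventually disjoint — which may require replacing $v$ by a forward iterate, pulling the critical point back along $\psi$, or arguing that some iterate of the original critical point lands in such a disc; and one must ensure the residue field being uncountable is genuinely used to avoid the finitely many forbidden directions (critical directions of $\widetilde\psi$ and holes) at each step. Packaging this cleanly — perhaps by first proving a lemma of the form ``if a type~II fixed point has basin free of type~I critical points then $\widetilde\psi$ has no critical point and no hole with infinite forward orbit'' — is where the real work lies; everything else is a direct application of Proposition~\ref{pro:3} and the disjoint-disc construction recalled before the theorem.
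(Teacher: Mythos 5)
Your strategy is essentially the same as the paper's: propagate a rigid critical point along the (eventually good, pairwise distinct) forward orbit of directions, land its orbit inside a disjoint family of discs in the basin, and derive a contradiction with the absence of type~I critical points. You also correctly observe that the ``moreover'' clause reduces to the same mechanism once one notes that a critical direction of $\widetilde\psi$ is either a hole or a good direction of local degree $\ge 2$.

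However, there is a genuine gap, and you have actually named it yourself in your ``main obstacle'' paragraph without closing it. The argument hinges on being able to say that the disc $U_{T_x\psi^{n}(w)}$ contains a point of some fixed critical orbit for \emph{every} $n$, so that once the direction orbit stabilizes to pairwise distinct good directions, the tail of that critical orbit sits in a disjoint union of discs shrinking to $x$. For good directions this is automatic: $\psi(U_n)=U_{n+1}$. But for a \emph{bad} direction $v$, $\psi(U_v)=\pberl$, and Proposition~\ref{pro:3}(3) by itself only tells you $U_v$ contains some rigid critical point; it does not tell you that any critical \emph{value} lands in the next disc $U_{T_x\psi(v)}$. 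Your parenthetical ``the forward image, or by taking preimages, a rigid critical point of $\psi$'' does not repair this: a good, non-critical, non-hole direction $v$ gives an \emph{injective} $\psi\colon U_v\to U_{T_x\psi(v)}$, so $U_v$ need not contain any rigid critical point, and pulling back critical points or critical values along $\psi$ into $U_v$ does not produce a critical point in $U_v$ either. What is missing is exactly the paper's Lemma~\ref{CriticalValueLemma}: if $\psi\colon U_v\to\psi(U_v)$ is not injective (in particular, if $v$ is a bad direction), then $U_v$ contains a rigid critical point $c$ with $\psi(c)\in U_{T_x\psi(v)}$. This is the nontrivial step — its proof in the paper requires analyzing the skeleton of $U_v\setminus\psi^{-1}(\psi(x))$ to locate a type~II point $y$ of valence $\ge 2$ with $\deg T_y\psi\ge 2$, and then applying the non-Archimedean Rolle theorem in a well-chosen sub-disc. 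With that lemma in hand, the induction you sketch goes through exactly as you describe and gives the theorem.

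One minor remark: in your first reduction you write that ``the basin of $\cO$ for $\vphi$ is contained in the basin of $\{x\}$ for $\psi$, so it remains free of type~I critical points.'' The inclusion you need is the reverse one (the basin of $x$ under $\psi$ is contained in the basin of $\cO$ under $\vphi$), since that is the direction in which the absence of type~I critical points transfers. This is easily fixed and is not where the difficulty lies.
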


The proof relies on the fact that  any bad direction contains a rigid critical point and its ``image'' contains a critical value.
This is a particular and simple result about the ``ramification locus'' of a rational map which we state and prove in the lemma below.
See the more general work by Faber for a detailed study of the ramification locus~\cite{FaberRamificationI,FaberRamificationII}.

\begin{lemma}
\label{CriticalValueLemma}
  Let $\vphi: \pberl \to \pberl$ be a rational map of degree at least $2$.
Consider a type II point $x \in \pberl$. 
Let  $v \in \tang{x}$ and consider  $w = T_x\varphi (v) \in \tang{\varphi(x)}$. Denote by $U_v$ and $U_w$ the corresponding strict open Berkovich discs.

If $\vphi: U_v \to \vphi(U_v)$ is not injective, then there exists a rigid critical point $c \in U_v$ such that the corresponding critical value $\vphi(c) \in U_w$. 
\end{lemma}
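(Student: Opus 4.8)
\textbf{Proof strategy for Lemma~\ref{CriticalValueLemma}.} The plan is to reduce to the case where $x$ is the Gauss point and $\varphi$ fixes it, so that the already-recorded Proposition~\ref{pro:3} applies directly. Concretely, choose M\"obius transformations $\eta, \gamma \in \operatorname{PGL}(2,\L)$ with $\eta(x_g) = x$ and $\gamma(x_g) = \varphi(x)$, and set $\psi = \gamma^{-1} \circ \varphi \circ \eta$, so that $\psi(x_g) = x_g$. Since $\eta$ and $\gamma$ induce $\ponec$-isomorphisms between the relevant tangent spaces carrying the distinguished directions to directions at $x_g$, and since critical points and critical values are preserved (with multiplicities) under pre- and post-composition by automorphisms, it suffices to prove the statement for $\psi$ at the Gauss point. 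So I may assume $x = x_g$, $\varphi(x_g) = x_g$, and $v, w$ are directions at $x_g$, identified via reduction with points $\bv, \bw \in \ponec$ with $\widetilde{\varphi}(\bv) = \bw$.

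Next I would split according to whether $v$ is a good or bad direction, i.e.\ whether $\bv \in \cH(\varphi)$ or not. If $\bv \notin \cH(\varphi)$, then by Proposition~\ref{pro:3}(4) the map $\varphi: U_v \to U_w$ is surjective of degree $\deg_{\bv}\widetilde{\varphi}$; the hypothesis that $\varphi|_{U_v}$ is not injective forces $\deg_{\bv}\widetilde{\varphi} \ge 2$, so $\bv$ is a critical point of $\widetilde{\varphi}$ in the usual sense. But $\bv$ corresponds to the direction $v$ at $x_g$, and a direction at a type~II point is the convex hull of $\rho^{-1}(\bv)$; the set $\rho^{-1}(\bv) \subset \ponel$ consists of rigid points of $U_v$. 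A local argument — writing $\varphi$ in coordinates centered so that the relevant balls become the unit ball, and using that $\deg_{\bv}\widetilde{\varphi} \ge 2$ means $\widetilde{\varphi}$ ramifies — produces a rigid point $c \in \rho^{-1}(\bv) \subset U_v$ at which $\varphi$ has local degree $\ge 2$, i.e.\ a rigid critical point; and $\widetilde{\varphi}(\bv) = \bw$ together with $\rho \circ \varphi = \widetilde{\varphi} \circ \rho$ on classical points (off the holes) gives $\varphi(c) \in \rho^{-1}(\bw) \subset U_w$, which is the desired critical value. If $\bv \in \cH(\varphi)$, then $v$ is a bad direction, $\varphi(U_v) = \pberl$ by Proposition~\ref{pro:3}(3), and by the same part every point of $U_w$ has $\delta + \deg_{\bv}\widetilde{\varphi}$ preimages in $U_v$ while points outside $U_w$ have $\delta$; since $\varphi(U_v) = \pberl \ne U_w$ while $U_v$ has total mapping degree $\delta + \deg_{\bv}\widetilde{\varphi}$ onto its image, the map $U_v \to \pberl$ collapses mass, and in particular the restriction over $U_w$ has degree $\ge 2$ (as $\delta + \deg_{\bv}\widetilde{\varphi} \ge 2$, using $\delta \ge 1$ and either $\delta \ge 2$ or $\deg_{\bv}\widetilde{\varphi} \ge 1$, noting $v$ being bad already forces $\delta \ge 1$; the non-injectivity hypothesis is automatic here). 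Hence some fiber over a rigid point of $\rho^{-1}(\bw) \subset U_w$ meets $U_v$ in fewer points than its degree, yielding a rigid critical point $c \in U_v$ with $\varphi(c) \in U_w$.

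To package both cases uniformly I would phrase the key sublemma as follows: if $\varphi: \pberl \to \pberl$ fixes $x_g$ and $\varphi: U_v \to \varphi(U_v)$ has mapping degree $e \ge 2$ onto its image, then $U_v$ contains a rigid point $c$ with $\deg_c \varphi \ge 2$ whose image lies in $U_{T_{x_g}\varphi(v)}$. This follows from the standard theory of rational maps on $\pberl$: the total degree $\sum_{y \in \varphi^{-1}(z) \cap U_v} \deg_y \varphi = e$ for $z$ in the image, combined with the Riemann–Hurwitz / mass-balance count over the tree (each branch point of $U_v$ carries its own reduction, and by induction on the number of type~II points involved — or directly by the fact that a degree-$e$ self-map of a Berkovich disc onto a disc ramifies when $e \ge 2$, cf.\ \cite[Sections~4.5--4.6, 9.1]{BakerRumelyBook}), produces the ramification in $U_v$ at a classical point since the ramification locus accumulates to classical points when confined to a disc. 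The \textbf{main obstacle} is making the last sentence precise: showing that the ramification forced by $e \ge 2$ can be located at a \emph{rigid} point of $U_v$ rather than merely at some type~II point inside it. I expect to handle this by an induction that peels off one reduction at a time — at each type~II point $y$ in $U_v$ with $\deg_y\varphi \ge 2$, the reduction $\widetilde{\varphi}$ (in suitable coordinates) is a degree $\ge 2$ map of $\ponec$, hence has a critical point, which either is itself a bad direction (pass into a smaller disc and recurse, with strictly fewer type~II points to traverse toward the end) or is a good direction with $\deg_{\cdot}\widetilde{\varphi} \ge 2$, in which case Proposition~\ref{pro:3}(4) lets me descend into that smaller disc while the mapping degree stays $\ge 2$ — terminating at a rigid point since $\pberl$ has height bounded by the number of type~II points crossed, or more cleanly by invoking that the ramification locus of $\varphi$ restricted to a strict open Berkovich disc is a finite subtree whose endpoints toward the inside are rigid. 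Throughout, $\rho \circ \varphi = \widetilde{\varphi} \circ \rho$ (off the holes) guarantees the critical value lands in $U_w$.
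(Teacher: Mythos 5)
Your reduction to the Gauss point is fine, and splitting into good/bad directions matches the paper's structure. But the heart of your argument — the ``sublemma'' that a Berkovich disc mapped with degree $\ge 2$ onto a disc must contain a \emph{rigid} critical point — is exactly the Non-Archimedean Rolle's Theorem (Baker--Rumely, Lemma~10.43), and the paper simply cites it. You instead try to rederive it by an inductive descent ``peeling off one reduction at a time,'' and this is where the gap lies.

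The descent is not well-founded. At each step you find a type~II point $y$ with $\deg_y\varphi \ge 2$, locate a critical direction of $\widetilde{\varphi}$ at $y$, and pass into a smaller sub-disc. But there is no ``height bounded by the number of type II points crossed'': along any segment in $\pberl$ there are uncountably many type II points, and $\L$ (a completion of Puiseux series) is not spherically complete, so a strictly nested chain of balls can have empty rigid limit and shrink instead to a point of type~III or~IV. Nothing in the peeling argument prevents this. Your fallback claim — that the ramification locus inside a strict open disc is a finite subtree whose inward endpoints are rigid — is also not true in general (Faber's work, cited in the paper, shows the ramification locus can have infinitely many branch points). The fix is not to prove this from scratch but to cite Rolle, which holds over $\L$ precisely because the residue field $\C$ has characteristic~$0$.

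The bad-direction case also needs more care than your counting sketch supplies. When $v$ is bad, $\varphi(U_v) = \pberl$, so Rolle cannot be applied directly to $\varphi\colon U_v \to U_w$ (the target is not a disc, and $U_v \to U_w$ is not even a map). The paper first carves out the component $V$ of $U_v \setminus \varphi^{-1}(\varphi(x))$ with $x \in \partial V$, shows $\varphi(V) = U_w$, and then runs a tree argument on the skeleton of $V$ to locate a type~II point $y$ with $\deg T_y\varphi \ge 2$ and a good direction $c$ at $y$ with $U_c \subset V$ and $\varphi\colon U_c \to U_b \subset U_w$ of degree $\ge 2$; only then does it invoke Rolle on $U_c$. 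Your statement ``some fiber over a rigid point of $\rho^{-1}(\bw)$ meets $U_v$ in fewer points than its degree'' is the desired conclusion restated, not an argument: a~priori all fibers could be simple, and ruling that out is exactly the content of Rolle applied to a suitable sub-disc, which is what the paper's tree step produces.
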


Note that according to Proposition~\ref{pro:3},  we have that $\vphi: U_v \to \vphi(U_v)$ is not injective if and only if 
$v$ is a bad direction or $\deg_v T_x \varphi \ge 2$.

\begin{proof}
If $v$ is a good direction at $x$, then the result follows directly from the Non-Archimedean Rolle's Theorem (see~\cite[Lemma 10.43]{BakerRumelyBook}).

Assume that $v$ is a bad direction. According to Proposition~\ref{pro:3} (3),  there exists at least one preimage of $\vphi(x)$ in $U_v$.
Denote by $V$ the connected component  $U_v \setminus \vphi^{-1}(\vphi(x))$ such that $x \in \partial V$
and observe that $V$ is strictly contained in $U_v$.
It follows that $V$ is a strict basic open set  and that $\vphi (V) = U_w$, since $\vphi (V)$ is open and closed in $U_w$.
Let $\cA_V$ be the skeleton of $V$ (i.e. the convex hull of $\partial V$). 
Note that $\vphi(\cA_V)$ is also a (finite) tree~\cite[Corollary 9.36]{BakerRumelyBook}. 
Since all the endpoints of $\cA_V$ are mapped onto $\vphi(x)$, there must
exist a point $y \in \cA_V$ with valence $k \geq 2$ which is mapped to an endpoint of $\vphi(\cA_V)$.
All the directions at $y$ which intersect $\cA_V$ must map (under $T_y \vphi$) to the unique direction at
$\vphi(y)$ which intersects $\vphi(\cA_V)$.
Hence, $y$ is a type II point, since $T\varphi$ is injective at type I, III, IV points~\cite[Corollary 9.20]{BakerRumelyBook}. Also, $\deg T_y \vphi \geq k \geq 2$ and $T_y \vphi : T_y \pberl \to T_{\vphi(y)} \pberl$ must have at least two critical values.
So there exists a critical value $b$ of $ T_y \vphi$ which is a direction at $\vphi(y)$ distinct than the one determined by   $\vphi(\cA_V)$.
The corresponding critical point  $c$ of $ T_y \vphi$ is a direction at $y$ 
distinct from those determined by the elements of $\partial V$. Thus, the corresponding disc $U_c$ is completely contained in $V$. Therefore, 
$\vphi(U_c) = U_b$ and $\vphi:U_c \to U_b$ is of degree at least $2$. By the Non-Archimedean Rolle's Theorem~\cite[Lemma 10.43]{BakerRumelyBook}, the disc $U_c$ contains
a critical point, $U_b$ contains a critical value and the lemma follows.
\end{proof}

\begin{proof}[Proof of Theorem~\ref{FinitelyManyNonPCFTheorem}]
We proceed by contradiction and suppose that
there exists a  direction $w$ at $T_x \pberl$ with infinite forward orbit under $T_x \vphi^q$
that contains a rigid critical point of $\vphi^q$.
It is sufficient to show that there exists a rigid critical point of $\varphi$ in the basin of the periodic orbit $\cO$ of $x$.
Hence, it is also sufficient to show that the forward orbit under $\vphi^q$ of a rigid critical point of $\vphi^q$ converges to $x$.

We claim that the disc in the direction $T_x \vphi^{nq}(w)$ contains a point in the forward
orbit of a rigid critical point of $\vphi^q$ for all $n \geq 0$.  We proceed
by induction.  Suppose that the disc $U_n$ in the direction $T_x \vphi^{nq}(w)$ contains a point in
the forward orbit of a critical point of $\vphi^q$.  If $T_x
\vphi^{nq}(w)$ is a good direction at $x$ for $\vphi^q$, then
$\vphi^q(U_n) = U_{n+1}$ contains a
critical orbit element.  If $T_x \vphi^{nq}(w)$ is a bad direction at
$x$ for $\vphi^q$, then the previous lemma guarantees that the disc $U_{n+1}$ associated to the 
direction $T_x \vphi^{(n+1)q}(w)$ contains a critical value of $\vphi^q$.

Now, for $n$ sufficiently large, say for $n \geq n_0$, the direction
$T_x \vphi^{nq}(w)$ is always a good direction. Moreover, for some rigid critical point
$c$ of $\vphi^q$ and some integer $\ell \ge 1$ we have that 
$\vphi^{\ell q}(c) \in U_{n_0}$. Therefore, $\vphi^{nq + \ell q}(c) \to x$ as $n \to \infty$, since $\vphi^{nq + \ell q}(c) \in U_{n+n_0}$ for all $n \ge 0$
and,  given a neighborhood $V$ of $x$, there are at most finitely many discs $U_{n+n_0}$ not contained in $V$.
\end{proof}

For the sake of simplicity, if $\cO$ is a periodic orbit of period $p$, then we let $T_\cO \varphi$ denote $T_\z \varphi^p$ for some $\z \in \cO$.
Note that $T_\cO \varphi$ depends on $\z \in \cO$, and not necessarily these maps are conjugate for different choices of $\z$. However, there are some
properties that are independent of the choice, such as being postcritically finite or conjugate to a monomial.

\begin{corollary}
\label{cor:2}
  Consider a rational map $\varphi : \pberl \to \pberl$ of degree $d \geq 2$. 
  Then $\varphi$ has at most $2d-2$ repelling type II periodic orbits $\cO$ such that  $T_x \vphi^q$ is not postcritically finite for all $x \in \cO$ where $q$ is the period of $\cO$. 
\end{corollary}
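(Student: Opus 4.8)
The plan is to count critical points. By Theorem~\ref{FinitelyManyNonPCFTheorem}, if $\cO$ is a type II periodic orbit of period $q$ with $T_x\varphi^q$ not postcritically finite for $x\in\cO$ (note that for a repelling orbit $\deg_x\varphi^q\ge 2$ is automatic, so $T_x\varphi^q$ is a genuine rational map of degree $\ge 2$ and ``postcritically finite'' is a nonvacuous condition), then the basin of $\cO$ must contain a type I critical point of $\varphi$. So the first step is to fix, for each such orbit $\cO$, a rigid critical point $c_\cO$ lying in the basin of $\cO$. Since $\varphi$ has degree $d$, counted with multiplicity it has exactly $2d-2$ critical points in $\ponel$; hence it suffices to show that the map $\cO\mapsto c_\cO$ can be chosen injectively, i.e. that the basins of two distinct such periodic orbits are disjoint.

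The second step is therefore to verify that distinct repelling type II periodic orbits have disjoint basins. This should follow from the structure of the basin of a repelling type II periodic orbit described in the paragraph preceding Theorem~\ref{FinitelyManyNonPCFTheorem}: after replacing $\varphi$ by an iterate we may assume $x\in\cO$ is fixed, and a point $z$ lies in the basin of $\cO$ precisely when $\varphi^n(z)\to\cO$ (enters every neighborhood eventually). If $z$ lay in the basins of two distinct periodic orbits $\cO_1\ne\cO_2$, then the forward orbit of $z$ would eventually be contained in any neighborhood of $\cO_1$ and also in any neighborhood of $\cO_2$; choosing disjoint neighborhoods (possible since $\cO_1,\cO_2$ are disjoint finite sets in the Hausdorff space $\pberl$) yields a contradiction once $n$ is large enough that the orbit must be simultaneously inside both. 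So the basins are pairwise disjoint, and each contains at least one rigid critical point.

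Combining: to each of the orbits in question we assign a distinct rigid critical point (distinct because the basins are disjoint), and since there are only $2d-2$ rigid critical points counted with multiplicity — a fortiori at most $2d-2$ distinct ones — there can be at most $2d-2$ such orbits.

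The main obstacle I anticipate is not the counting but making sure Theorem~\ref{FinitelyManyNonPCFTheorem} genuinely applies: its hypothesis is that the basin of $\cO$ is \emph{free} of type I critical points, and its conclusion (for $\deg_x\varphi^q\ge 2$) is that $T_x\varphi^q$ is postcritically finite. So one argues by contraposition: if $T_x\varphi^q$ is \emph{not} postcritically finite, the basin of $\cO$ must contain a type I critical point. One should double-check that for a \emph{repelling} type II periodic orbit the local degree $\deg_x\varphi^q$ is indeed $\ge 2$ (this is exactly the definition of repelling given in the text), so that the relevant clause of the theorem — rather than only the ``bad directions have finite orbit'' clause — is the one in force. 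With that in hand the argument is a clean pigeonhole on critical points, so the deduction is short.
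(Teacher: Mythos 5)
Your argument is correct and is precisely the one the corollary is implicitly asking the reader to supply from Theorem~\ref{FinitelyManyNonPCFTheorem}: contrapose the theorem to place a distinct rigid critical point of $\varphi$ in each basin (using that ``repelling'' means $\deg_x\varphi^q\ge 2$, so the ``moreover'' clause is the one in force), note the basins of distinct periodic orbits are disjoint by the Hausdorff/eventual-containment argument, and pigeonhole against the $2d-2$ rigid critical points from Riemann--Hurwitz over $\L$. This matches the paper's intent; no gaps.
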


The examples, known to the author, where $\vphi$ has infinitely many non-rigid repelling periodic points have the property that for all 
these periodic points $x$ of $\vphi$, with at most finitely many exceptions, the action on the tangent space $T_x \varphi^q$ is conjugate to $z^{\pm k}$ for some $k \geq 2$, where $q$ is the period of $x$. It is natural to ask whether 
at most finitely many repelling periodic orbits are such that  $T_x \vphi^q$ is not ``monomial'', for all $x \in \cO$ where $q$ is the period of $\cO$.

The dynamics of degree $2$ rational maps $\varphi: \pberl \to \pberl$ was studied in detail  in~\cite{KiwiPuiseuxQuadratic}.
We reproduce below  the statement of Theorem~1~in~\cite{KiwiPuiseuxQuadratic}  which describes the portion of the Julia set outside $\ponel$. 
Of particular interest for us will be the description of the possible configurations for type II repelling periodic orbits (parts (3) and (4)).

\begin{theorem}
  \label{thr:periodic}
  Let $\vphi: \pberl \to \pberl$ be a quadratic rational map without a non-rigid repelling fixed point. Then one and exactly one of the following holds:
  \begin{enumerate}
  \item $\juliavphi \setminus \ponel = \emptyset$.
  \item $\juliavphi \setminus \ponel$ is the grand orbit of an indifferent periodic orbit. 
  \item $\juliavphi \setminus \ponel$ is the grand orbit of one repelling periodic orbit $\cO$ of period $q \ge 2$. 
For all $x \in \cO$, the  map $T_x \vphi^q : \ponec \to \ponec$ is a quadratic rational map with a multiple fixed point. 
  \item $\juliavphi \setminus \ponel$ is the grand orbit of two distinct periodic orbits $\cO, \cO'$ of periods $q,  q' \ge 2$, respectively, where $q' > q$. 
The  map $T_x \vphi^q : \ponec \to \ponec$ is a  quadratic rational map with a multiple fixed point, for all $x \in \cO$.
The map  $T_x \vphi^{q'} : \ponec \to \ponec$ is (modulo choice of coordinates) a quadratic polynomial, for all $x \in \cO'$.
  \end{enumerate}
\end{theorem}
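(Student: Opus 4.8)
The plan is to translate the problem into the tree structure of $\pberl$ and to exploit that a degree $2$ map has only two classical critical points, hence a very limited ``ramification budget''.

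First I would fix the setup. By the results of Rivera-Letelier recalled just above, every non-rigid periodic point of $\juliavphi$ is of type~II; such a point $x$ of period $q$ is \emph{repelling} exactly when $\deg_x\vphi^q\ge 2$ and \emph{indifferent} exactly when $\deg_x\vphi^q=1$ while $T_x\vphi^q$ has a bad direction with infinite forward orbit. The hypothesis that $\vphi$ has no non-rigid repelling fixed point is equivalent to saying that $\vphi$ has no potential good reduction, that is, no M\"obius conjugate of $\vphi$ fixes a type~II point with local degree $2$; in particular $\vphi$ has bad reduction at every type~II point. Next I would pin down the ramification: since $\deg\vphi=2$ and the residue field $\C$ has characteristic zero, the set $\{x\in\pberl:\deg_x\vphi\ge 2\}$ is exactly the arc $I=[c_1,c_2]$ joining the two classical critical points $c_1,c_2\in\ponel$ (this follows from the structure of the ramification locus in residue characteristic zero; see \cite{FaberRamificationI,FaberRamificationII}). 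Combined with Lemma~\ref{CriticalValueLemma}, which says every bad direction of every iterate of $\vphi$ contains a classical critical point, this confines all the ``non-tame'' behaviour of $\vphi$ and its iterates to the forward orbit of a neighbourhood of $I$. In particular every non-rigid repelling periodic orbit contains a point of $I$, and a short argument tracking the finite itinerary through $I$ of an arbitrary type~II point of $\juliavphi$ shows that this point is preperiodic; thus the description of $\juliavphi\setminus\ponel$ reduces to that of the non-rigid periodic orbits and their grand orbits.

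The crux is a ``critical point accounting''. Theorem~\ref{FinitelyManyNonPCFTheorem}, whose conclusion already applies both to indifferent orbits (which carry a bad direction with infinite orbit) and to repelling orbits with non-postcritically-finite tangent map, shows that each such orbit contains a classical critical point in its basin; since there are only two critical points and distinct periodic orbits have disjoint basins, there are at most two of these. To upgrade this to the full classification --- to control also the special postcritically-finite repelling orbits, to rule out every configuration not on the list (two indifferent orbits, an indifferent orbit together with a repelling one, a repelling orbit with $\deg_x\vphi^q>2$, three or more repelling orbits), and above all to identify the shape of each $T_x\vphi^q$ --- I would return to the arc $I$. Each point of a repelling orbit $\cO$ of period $q$ lies on $I$, the map $\vphi$ carries the germ of $I$ at one point of $\cO$ to the germ of $I$ at the next, and at $x\in\cO$ the only directions at which the degree-$2$ map $T_x\vphi^q:\ponec\to\ponec$ can ramify are the (at most two) directions along $I$, together with the direction pointing toward the critical point trapped in the basin of $\cO$. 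Analysing the action of $T_x\vphi^q$ on these distinguished directions forces $T_x\vphi^q$ to be either a quadratic rational map with a fixed point of multiplier one (hence with a multiple fixed point), or, in the degenerate case where the trapped direction is itself totally ramified and fixed, a quadratic polynomial; in the two-orbit case the orbit whose trapped direction is totally ramified and fixed is the polynomial one, and a nesting/length estimate along $I$ then forces its period $q'$ to exceed the period $q$ of the other orbit.

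The step I expect to be the main obstacle is precisely this combinatorial-dynamical analysis on $I$: simultaneously controlling the itineraries of both critical points, the way $\vphi$ folds $I$, the induced partial self-map of $I$, and the induced maps on the tangent spaces along the cycle, and extracting from them the local degrees and the fine structure of each $T_x\vphi^q$. Once the ramification locus, the critical-point budget, and the action on $I$ are in hand, the remaining bookkeeping --- mutual exclusivity of the four cases, the elimination of the off-list configurations, and the verification that $\juliavphi\setminus\ponel=\emptyset$ in the absence of non-rigid Julia periodic points --- should be routine.
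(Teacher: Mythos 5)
The paper does not actually prove Theorem~\ref{thr:periodic}: it states it verbatim as a quotation of Theorem~1 of~\cite{KiwiPuiseuxQuadratic}, so there is no in-paper argument to compare against. Judging your sketch on its own terms, the opening moves are sensible --- in residue characteristic zero the ramification locus of a degree-two map is the arc $I=[c_1,c_2]$ joining the two classical critical points, every non-rigid repelling periodic point must meet $I$, and Theorem~\ref{FinitelyManyNonPCFTheorem} combined with disjointness of basins bounds at two the number of type~II periodic orbits that are either indifferent or repelling with non-postcritically-finite tangent map.

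There is, however, a genuine gap, located precisely where you flag ``the main obstacle.'' The critical-point accounting controls only indifferent orbits and repelling orbits whose tangent map fails to be postcritically finite; it says nothing about additional repelling type~II orbits with postcritically finite tangent maps, and a priori there could be infinitely many of those (this is exactly what happens for the Latt\`es and Cantor-cross-circle families in Section~\ref{sec:examples}, which have infinitely many independent repelling type~II cycles, all with monomial tangent maps). Ruling these out in degree two, establishing that every type~II Julia point is preperiodic so that $\juliavphi\setminus\ponel$ is literally a union of grand orbits, deriving that $T_x\vphi^q$ has a multiple fixed point on one cycle and is a quadratic polynomial on the other, proving $q'>q$, and excluding the off-list configurations (two indifferent orbits; indifferent together with repelling; a cycle with local degree exceeding two) --- all of this is the actual content of Theorem~\ref{thr:periodic}, and it constitutes essentially the whole of~\cite{KiwiPuiseuxQuadratic}. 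Your proposal labels it ``combinatorial-dynamical analysis on $I$'' and ``remaining bookkeeping'' without carrying it out, and the intermediate assertion that the ramification structure forces $T_x\vphi^q$ to ramify only in ``the directions along $I$ together with the trapped direction'' is itself unjustified (the critical directions at $x$ of $\vphi^q$ live along the tree $\bigcup_{k<q}\vphi^{-k}(I)$, not merely along $I$). As written, this is an outline of an approach rather than a proof.
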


\begin{proof}[Proof of Theorem~\ref{ithr:3}]
Suppose that $\mish{M^{(1)}}, \dots, \mish{M^{(N)}}$ are pairwise dynamically independent rescalings for $\mish{f}$ of periods $q_1, \dots, q_N$, with 
 postcritically infinite rescaling limits. Let $x_j = \mishl{M}^{(j)} (x_g)\in \pberl$ for $j=1, \dots, N$. 
From Proposition~\ref{pro:2}, we have that $T_{x_j} \mishl{f}^{q_j}: \ponec \to \ponec$ is not postcritically finite, for all $j=1, \dots, N$. In view of Corollary~\ref{cor:1},
the points $x_1, \dots, x_N$ lie in pairwise distinct periodic orbits of $\mishl{f}$. It follows that $N \leq 2d -2$, by Corollary~\ref{cor:2}.

For quadratic rational maps (i.e., when $d=2$) we have that $N \leq 2$. If $N=2$, then (4) of Theorem~\ref{thr:periodic} holds for $\mishl{f}$, and from Proposition~\ref{pro:2} we conclude that (1) of  Theorem~\ref{ithr:3} holds. If $N =1$ and $q_1 > 1$, then (2) of Theorem~\ref{thr:periodic} holds for $\mishl{f}$ and, similarly, we obtain that (2) of  Theorem~\ref{ithr:3} holds.
\end{proof}

\section{Sequential rescalings}
\label{sec:sequential}
In this section we discuss the basic properties of sequential rescalings needed in order to prove Theorem~\ref{ithr:2}.
We start with the discussion of sequential limits of rational maps which is closely related to De Marco's article~\cite{DeMarcoIteration} but we use a slightly different 
terminology. Then we continue to study the definitions of equivalence and dynamical independence of sequences of frames to finish the section
by introducing a convenient choice
of a rescaling sequence given a rescaling limit.

\subsection{Sequences and reduction}
Recall that we identify $\ratdc$ with a subset of $\P^{2d+1}_\C$.
For convenience we use homogeneous coordinates $[x:y]$ in $\ponec$.
Given any $f=[a_0: \cdots:a_d:b_0:\cdots:b_d] \in \P^{2d+1}_\C$
let 
$$P(x,y)= a_0 y^d + a_1 x y^{d-1} + \cdots + a_d x^ d,$$
$$Q(x,y)=b_0 y^d + b_1 x y^{d-1} + \cdots + b_d x^ d.$$
Consider a homogeneous polynomial $H = \gcd (P,Q)$ of degree at most $d$.
Write
$$P= H \cdot \tilde{P}, \, Q = H \cdot \tilde{Q}.$$
Let $$\cH (f) = \{ [x:y] \in \ponec \mid H(x,y)=0 \}$$
and call $\cH(f)$ the {\sf holes} of $f \in \P^{2d+1}_\C$.
Also, consider $\tilde{f}: \ponec \to \ponec$ the well defined rational map
of degree $d - \deg H$ given by $$\tilde{f} ([x:y]) = [\tilde{P}(x,y): \tilde{Q}(x,y)].$$
We call $\tilde{f}$ the {\sf reduction of $f$} (compare with~\cite{DeMarcoIteration}). 

Note that for a holomorphic family $\mish{f}$ we defined the reduction $\tilde{\mishl{f}}$ of the associated rational map $\mishl{f}: \ponel \to \ponel$. 
It follows that $\tilde{\mishl{f}}$ agrees with the reduction  $\tilde{f_0}$ of the point $f_0 \in \P^{2d+1}_\C$.

\begin{lemma}
  \label{lem:9}
  Consider a sequence $\seqfn \subset \P^{2d+1}_\C$ converging to $f \in \P^{2d+1}_\C$. Then $f_n \to \tilde{f}$ uniformly on compact subsets of $\ponec \setminus \cH(f)$, where $\tilde{f}$ denotes the reduction of $f$ and $\cH(f)$ the  holes of $f$.
\end{lemma}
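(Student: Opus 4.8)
The plan is to mirror the proof of Lemma~\ref{lem:12}, replacing the limit $t \to 0$ of holomorphic coefficients by the limit $n \to \infty$ of convergent sequences of complex numbers. First I would write $f = [P:Q]$ with $P,Q$ the homogeneous degree $d$ polynomials read off from the homogeneous coordinates of $f$, factor out $H = \gcd(P,Q)$ so that $P = H\tilde P$ and $Q = H\tilde Q$ with $\gcd(\tilde P,\tilde Q)=1$, and recall that then $\tilde f = [\tilde P:\tilde Q]$ and $\cH(f)$ is the zero set of $H$ in $\ponec$. Since $f_n \to f$ in $\P^{2d+1}_\C$, I would choose homogeneous degree $d$ representatives $P_n,Q_n$ of $f_n$ whose coefficient vectors converge to those of $P$ and $Q$; because all these forms have bounded degree, this yields $P_n \to P$ and $Q_n \to Q$ uniformly on $\ponec$ (say, after normalizing coordinate vectors to the unit sphere of $\C^2$).

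Next, fix a compact set $K \subset \ponec \setminus \cH(f)$. Since $\gcd(\tilde P,\tilde Q)=1$, the forms $P$ and $Q$ have no common zero on $\ponec \setminus \cH(f)$, so by compactness there is $\delta > 0$ with $\max(|P(z)|,|Q(z)|) \ge \delta$ for all $z \in K$. Uniform convergence then produces $n_0$ such that $\max(|P_n(z)|,|Q_n(z)|) \ge \delta/2 > 0$ on $K$ for $n \ge n_0$. The point of this estimate is that, for $n \ge n_0$, the holes of $f_n$ (the common zeros of $P_n$ and $Q_n$) avoid $K$, so on $K$ the reduction $\tilde{f_n}$ is literally represented by $[P_n:Q_n]$; similarly $\tilde f$ is represented by $[P:Q]$ on $K$ because $H$ does not vanish there.

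Finally I would bound the chordal distance between $[P_n(z):Q_n(z)]$ and $[P(z):Q(z)]$ on $K$: by the standard formula it equals the modulus of $P_n(z)Q(z) - Q_n(z)P(z)$ divided by $\sqrt{|P_n(z)|^2+|Q_n(z)|^2}\cdot\sqrt{|P(z)|^2+|Q(z)|^2}$. For $n \ge n_0$ the denominator is bounded below on $K$ by the previous step, while the numerator is the modulus of the form $P_nQ - Q_nP$, whose coefficients converge to those of $PQ - QP = 0$; hence it tends to $0$ uniformly on $K$. This gives $\tilde{f_n} \to \tilde f$ uniformly on $K$, which is the assertion.

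There is no deep difficulty here, but the one point that genuinely needs the argument above — rather than a bare pointwise computation — is that $f_n$ itself may have holes (in the application $\{f_n\} \subset \ratdc$ and this does not occur, but the lemma is stated for $\P^{2d+1}_\C$), so $[P_n(z):Q_n(z)]$ need not be defined at every $z$. The resolution is precisely the observation that the holes of $f_n$ cannot accumulate on a compact set disjoint from $\cH(f)$: a limit point of holes of $f_n$ would be a common zero of $P$ and $Q$, hence a point of $\cH(f)$. Once that is in place, the identification $\tilde{f_n} = [P_n:Q_n]$ on $K$ is legitimate and the chordal estimate finishes the proof.
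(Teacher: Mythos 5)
Your proof is correct and follows exactly the route the paper intends: the paper omits the proof of Lemma~\ref{lem:9}, referring the reader to its (brief) proof of Lemma~\ref{lem:12}, and your argument is that same argument — write $f_n = [P_n:Q_n]$ with $P_n \to P$, $Q_n \to Q$, then pass to the quotient away from the common zeros — carried out carefully in the chordal metric. Your extra remark that the holes of $f_n$ cannot accumulate in a compact set disjoint from $\cH(f)$ is a correct and worthwhile refinement, since the lemma as stated allows $f_n \in \P^{2d+1}_\C$ rather than just $f_n \in \ratdc$.
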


We omit the straightforward proof of the previous lemma (e.g., see the proof of Lemma~\ref{lem:12}).

\begin{lemma}
  \label{lem:11}
  Consider sequence $\seqfn \subset \ratd \subset \P^{2d+1}_\C$ 
and $\{ g_n \} \subset \rat^{d'}_\C \subset \P^{2d'+1}_\C$ converging to $f \in \P^{2d+1}_\C$ and $g \in \P^{2d'+1}_\C$, respectively, 
If $\deg \tilde{f} \ge 0$ and $\deg \tilde{g} \ge 1$, then 
$$ f_n \circ g_n \to \tilde{f} \circ \tilde{g}$$ 
uniformly on compact subsets of
$\ponec \setminus S$ where $S$ is the finite set given by  $$S = \tilde{g}^{-1} (\cH(f)  ) \cup \cH (g).$$
\end{lemma}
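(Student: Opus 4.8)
The plan is to reduce the statement to an application of Lemma~\ref{lem:9}, which already handles the convergence of a single sequence $\seqfn$ to its reduction away from the holes. The composition $f_n \circ g_n$ must be analyzed carefully because $g_n$ is only a rational map, not yet in a normalized polynomial form, and convergence can fail at two sources of degeneracy: where $g_n$ is close to a hole of $f$ (so that after applying $g_n$ we land near $\cH(f)$), and where $g_n$ itself degenerates, i.e. near $\cH(g)$. This explains the shape of the exceptional set $S = \tilde{g}^{-1}(\cH(f)) \cup \cH(g)$.

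First I would fix a point $z_0 \in \ponec \setminus S$ and a compact neighborhood $K$ of $z_0$ disjoint from $S$. Since $z_0 \notin \cH(g)$, Lemma~\ref{lem:9} applied to $\{g_n\}$ gives that $g_n \to \tilde g$ uniformly on $K$; in particular, for $n$ large, $g_n(K)$ is contained in a small neighborhood $K'$ of $\tilde g(K)$. Now I need $K'$ to be disjoint from $\cH(f)$: this is exactly guaranteed by $z_0 \notin \tilde g^{-1}(\cH(f))$, so that $\tilde g(z_0) \notin \cH(f)$, and by continuity of $\tilde g$ and compactness we may shrink $K$ so that $\tilde g(K) \cap \cH(f) = \emptyset$, hence $K'$ can be chosen disjoint from $\cH(f)$ as well. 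Then Lemma~\ref{lem:9} applied to $\seqfn$ gives $f_n \to \tilde f$ uniformly on the compact set $\overline{K'}$. Composing the two uniform convergences—using that $\tilde f$ is uniformly continuous on $\overline{K'}$—yields $f_n \circ g_n \to \tilde f \circ \tilde g$ uniformly on $K$. A standard covering argument then upgrades this to uniform convergence on arbitrary compact subsets of $\ponec \setminus S$.

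I would also pause to check that $S$ is indeed finite and that $\tilde f \circ \tilde g$ makes sense as a composition of rational maps: $\cH(g)$ is finite (it is the zero set of a homogeneous polynomial $H$ of degree $\le d'$ on $\ponec$), and $\tilde g^{-1}(\cH(f))$ is finite because $\tilde g$ is non-constant—this is where the hypothesis $\deg \tilde g \ge 1$ is used, both to ensure $\tilde g$ is a genuine map $\ponec \to \ponec$ with finite fibers and so that $\tilde f \circ \tilde g$ is a well-defined non-degenerate expression (the hypothesis $\deg \tilde f \ge 0$ merely allows $\tilde f$ to be constant, in which case $S = \cH(g)$ and the statement is essentially trivial).

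The main obstacle is bookkeeping the two nested neighborhoods correctly: one must be careful that $g_n(K) \subset K'$ for the \emph{same} large $n$ for which $f_n$ is already uniformly close to $\tilde f$ on $K'$, and that $K'$ can be taken compact and disjoint from $\cH(f)$ \emph{simultaneously} with $\tilde g(K)$ being disjoint from $\cH(f)$. All of this is routine once one works on a fixed compact set away from $S$; no genuinely new idea beyond Lemma~\ref{lem:9} and continuity is required, which is presumably why the paper says the proof is straightforward and omits it. I would therefore present it briefly, emphasizing the choice of $K$, $K'$ and the role of each hypothesis, and leave the $\varepsilon$-chasing implicit.
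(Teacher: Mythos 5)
Your argument is correct and fills in exactly the proof the paper omits (the paper merely notes it is "straightforward after observing that $\tilde g$ is finite-to-one and onto," which is the same observation you exploit to control preimages of $\cH(f)$). One small inaccuracy in a parenthetical aside: when $\deg\tilde f = 0$ the hole set $\cH(f)$ is \emph{not} empty (indeed $H$ then has degree $d\geq 1$), so $S$ does not reduce to $\cH(g)$; the lemma is of course still true and your main argument handles this case without change, but the claim as written is wrong and should be dropped.
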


Again the proof is straightforward after observing that $\tilde{g}$ is finite--to--one and onto.

\subsection{Basic properties of rescalings}
Rescaling limits and dynamical dependence behave nicely under equivalence of sequences of frames.

\begin{lemma}
  \label{lem:10}
  Suppose that $\seqMn$ and $\seqLn$ are equivalent rescalings for $\seqfn$. Then their rescaling period coincides and the rescaling limits are M\"obius conjugate.
\end{lemma}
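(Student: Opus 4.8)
The statement to prove is Lemma~\ref{lem:10}: if $\seqMn$ and $\seqLn$ are equivalent rescalings for $\seqfn$, then they have the same rescaling period and M\"obius conjugate rescaling limits. The plan is to unwind the definition of equivalence, namely that $M_n^{-1} \circ L_n \to M \in \moebiusc$, and feed this into the defining convergence~\eqref{eq:5} for the rescaling $\seqLn$.

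\textbf{Main argument.} Suppose $\seqLn$ is a rescaling of period $q$ with rescaling limit $h$, so that $L_n^{-1} \circ f_n^q \circ L_n (z) \to h(z)$ uniformly on compact subsets of $\ponec \setminus S$ for some finite $S$ and $\deg h \ge 2$. Write $A_n = M_n^{-1} \circ L_n$ and note $A_n \to M$ in $\moebiusc$, hence also $A_n^{-1} \to M^{-1}$. Then
$$M_n^{-1} \circ f_n^q \circ M_n = A_n \circ \left( L_n^{-1} \circ f_n^q \circ L_n \right) \circ A_n^{-1}.$$
Since $A_n^{-1} \to M^{-1}$ locally uniformly on $\ponec$ (convergence in $\moebiusc$ is locally uniform convergence of the corresponding maps of the sphere, as $\moebiusc \equiv \rat^1_\C$), the inner term converges to $h$ away from $S$, and $A_n \to M$ locally uniformly, the composition converges to $M \circ h \circ M^{-1}$ uniformly on compact subsets of $\ponec \setminus M(S)$. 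This exhibits $\seqMn$ as a rescaling for $\seqfn$ of period dividing $q$ with rescaling limit the M\"obius conjugate $M \circ h \circ M^{-1}$ of $h$, which in particular has the same degree $\ge 2$. Symmetrically, using that $\seqLn$ is obtained from $\seqMn$ by conjugating with $M_n^{-1} \circ L_n$ (which also converges in $\moebiusc$, to $M$), every period for which~\eqref{eq:5} holds for $\seqMn$ with a limit of degree $\ge 2$ is a period for $\seqLn$ as well. Hence the sets of such periods coincide; since the period of a rescaling is by definition the minimal such integer (and by the remark following Definition~\ref{def:1}, these sets are of the form $q \cdot \N$), the two rescaling periods are equal, say both $q$, and the rescaling limits are $h$ and $M \circ h \circ M^{-1}$ respectively — M\"obius conjugate, as claimed.

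\textbf{Bookkeeping point.} One small subtlety to handle carefully: the definition allows the finite exceptional set to vary with the rescaling, so I must track how $S$ transforms under the conjugating M\"obius maps. The point is that $M_n \to M$ and $M_n^{-1} \to M^{-1}$ uniformly on \emph{all} of $\ponec$ (since M\"obius maps converging in $\moebiusc$ do so uniformly on the sphere, having no exceptional points), so a compact subset $K$ of $\ponec \setminus M(S)$ is, for $n$ large, pushed by $A_n^{-1}$ into a fixed compact subset of $\ponec \setminus S$; this justifies the uniform convergence of the composite on $K$. There is no real obstacle here — the argument is a routine ``conjugation is continuous'' computation — but the one place to be slightly alert is making sure the exceptional sets are genuinely finite and that the relevant convergences are uniform on compacts of the appropriate punctured spheres, which is exactly what the statement of Definition~\ref{def:1} provides.
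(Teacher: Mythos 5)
Your proof is correct and takes essentially the same approach as the paper: both decompose $M_n^{-1}\circ f_n^q\circ M_n = (M_n^{-1}L_n)\circ(L_n^{-1}f_n^qL_n)\circ(L_n^{-1}M_n)$ and pass to the limit, with the paper invoking its Lemma~\ref{lem:11} where you argue directly; you also usefully spell out the symmetric direction needed to conclude the minimal periods coincide, which the paper leaves implicit. One small slip: in the bookkeeping paragraph you wrote ``$M_n\to M$ and $M_n^{-1}\to M^{-1}$'' where you clearly mean $A_n = M_n^{-1}\circ L_n$ (the $M_n$ themselves typically diverge in $\moebiusc$), but the surrounding text uses $A_n$ correctly and the argument is sound.
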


\begin{proof}
  If $M_n^{-1} \circ L_n \to M \in \rat^1_\C$, then 
$$M_n^{-1} \circ f_n^q \circ M_n =   M_n^{-1} \circ L_n \circ L_n^{-1} \circ f_n^q \circ L_n \circ L_n^{-1} \circ M_n$$
which in view of Lemma~\ref{lem:11} converges to $M^{-1} \circ g \circ M$ outside a finite set, provided that 
$\seqLn$ is a rescaling of period $q$ with limit $g$ for $\seqfn$. 
\end{proof}

Along similar lines one may fill the omitted proof of the following result.

\begin{lemma}
  \label{lem:2}
  Suppose that $\seqMn$ and $\seqMnprime$ are dynamically dependent rescalings for $\seqfn$.
Assume that $\seqLn$ and $\seqLnprime$ are equivalent to $\seqMn$ and $\seqMnprime$ respectively. Then $\seqLn$ and $\seqLnprime$ are dynamically dependent rescalings for $\seqfn$.
\end{lemma}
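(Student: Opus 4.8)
The plan is to unwind the definition of dynamical dependence (Definition~\ref{def:3}) for $\seqMn$ and $\seqMnprime$, insert the convergent ``change of frame'' maps coming from the equivalences, and reorganize the compositions so that the result again matches Definition~\ref{def:3} for $\seqLn$ and $\seqLnprime$. Concretely, suppose $\seqMn$, $\seqMnprime$ are rescalings of period dividing $q$ and are dynamically dependent, so after passing to a subsequence there are $1\le m\le q$, finite sets $S_1,S_2$ and rational maps $g_1,g_2$ of degree $\ge 1$ with
$$\left(M'_{n_k}\right)^{-1}\circ f_{n_k}^{m}\circ M_{n_k}(z)\to g_1(z)$$
uniformly on compacts of $\ponec\setminus S_1$, and
$$M_{n_k}^{-1}\circ f_{n_k}^{q-m}\circ M'_{n_k}(z)\to g_2(z)$$
uniformly on compacts of $\ponec\setminus S_2$. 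By hypothesis $L_n^{-1}\circ M_n\to A\in\rat^1_\C$ and $(L'_n)^{-1}\circ M'_n\to A'\in\rat^1_\C$ (equivalently $M_n^{-1}\circ L_n\to A^{-1}$, $(M'_n)^{-1}\circ L'_n\to (A')^{-1}$).

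First I would write
$$\left(L'_{n_k}\right)^{-1}\circ f_{n_k}^{m}\circ L_{n_k}
=\Bigl(\left(L'_{n_k}\right)^{-1}\circ M'_{n_k}\Bigr)\circ\Bigl(\left(M'_{n_k}\right)^{-1}\circ f_{n_k}^{m}\circ M_{n_k}\Bigr)\circ\Bigl(M_{n_k}^{-1}\circ L_{n_k}\Bigr),$$
and similarly
$$L_{n_k}^{-1}\circ f_{n_k}^{q-m}\circ L'_{n_k}
=\Bigl(L_{n_k}^{-1}\circ M_{n_k}\Bigr)\circ\Bigl(M_{n_k}^{-1}\circ f_{n_k}^{q-m}\circ M'_{n_k}\Bigr)\circ\Bigl(\left(M'_{n_k}\right)^{-1}\circ L'_{n_k}\Bigr).$$
Now I would apply Lemma~\ref{lem:11} twice to each line, viewing all maps as points of the appropriate $\P^{2d'+1}_\C$: the inner factor converges to $g_1$ (resp.\ $g_2$) off a finite set, and the outer factors converge to the Möbius maps $A'$ and $A^{-1}$ (resp.\ $A^{-1}$ and $A'$), whose reductions are themselves since Möbius maps have no holes and are onto. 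Hence the first composition converges to $A'\circ g_1\circ A^{-1}$ uniformly on compacts off a finite set $S_1'$, and the second converges to $A^{-1}\circ g_2\circ A'$ uniformly on compacts off a finite set $S_2'$, where $S_1',S_2'$ are obtained from $S_1,S_2$ by the finite-set bookkeeping in Lemma~\ref{lem:11}. Since $A'\circ g_1\circ A^{-1}$ and $A^{-1}\circ g_2\circ A'$ have the same degree as $g_1,g_2$ (Möbius conjugation/composition preserves degree $\ge1$), Definition~\ref{def:3} is satisfied with the same $m$ and the same period-$q$ constraint, so $\seqLn$ and $\seqLnprime$ are dynamically dependent.

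I would also note at the outset that $\seqLn$ and $\seqLnprime$ are genuinely rescalings for $\seqfn$: this is exactly Lemma~\ref{lem:10} (equivalent sequences of frames give the same rescaling period and Möbius-conjugate limits), so the statement ``dynamically dependent rescalings'' is meaningful. The only mild subtlety — and the step I expect to require the most care — is matching up the subsequences: Definition~\ref{def:3} only asks for \emph{some} subsequence, so I simply pass to the subsequence $\{n_k\}$ witnessing the dynamical dependence of $\seqMn,\seqMnprime$ and do all of the above along it; no diagonal argument is needed because the equivalence convergences $L_n^{-1}\circ M_n\to A$ etc.\ hold along the full sequence and hence along any subsequence. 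The remaining work is the routine hole-tracking in Lemma~\ref{lem:11}, which I would not spell out in detail, following the paper's own practice of omitting such verifications.
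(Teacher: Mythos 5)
Your proposal is correct and is exactly the argument the paper has in mind when it says the proof of Lemma~\ref{lem:2} can be filled in ``along similar lines'' as Lemma~\ref{lem:10}: sandwich the convergent change-of-frame maps around the two witnessing compositions and invoke Lemma~\ref{lem:11}. One small slip that does not affect the conclusion: in the second line the outer factors $L_{n_k}^{-1}\circ M_{n_k}$ and $(M'_{n_k})^{-1}\circ L'_{n_k}$ converge to $A$ and $(A')^{-1}$ respectively, so the limit is $A\circ g_2\circ (A')^{-1}$ rather than $A^{-1}\circ g_2\circ A'$; either way it is Möbius-conjugate to $g_2$ and hence of the required degree.
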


\subsection{The period of a rescaling}
\label{sec:period-rescaling}
Here we show that the ``period'' of a rescaling $\seqMn$ of $\seqfn$ has the desired property of dividing
 all the iterates of $f_n$ which have a limit after changing coordinates according to $M_n$.

\begin{lemma}
  \label{lem:8}
  Consider a sequence  $\seqfn \subset \ratdc$ for some $d \ge 2$. 
  Assume that there exist $q_0 \ge 1$, $q_1 \ge1$ and rational maps $g_0, g_1$ of degree $\ge1$, such that 
    $$M_n^{-1} \circ f_n^{q_j} \circ M_n \to g_j$$
uniformly on compact subsets outside a finite set. If $q = \gcd(q_0,q_1)$, then 
$$M_n^{-1} \circ f_n^{q} \circ M_n \to g$$
uniformly on compact subsets  outside a finite set, for some rational map $g$ of degree $\ge 1$.
  
   Moreover, if $\deg g_0 > 1$ or $\deg g_1 >1$, then $\deg g >1$.
\end{lemma}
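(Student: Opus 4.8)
The plan is to reduce the statement to two elementary facts about uniform convergence of rational maps on $\ponec$ and then to combine them with a Bézout argument, very much in the spirit of Lemmas~\ref{lem:10} and~\ref{lem:11}. Write $A_n = M_n^{-1}\circ f_n^q\circ M_n$, which is a genuine rational map of $\ponec$ of degree $d^q$, and set $r = q_0/q$, $s = q_1/q$, so that $\gcd(r,s)=1$ and $A_n^{\circ r}\to g_0$, $A_n^{\circ s}\to g_1$ uniformly on compact subsets outside finite sets. If $r=1$ or $s=1$ there is nothing to prove, so assume $r,s\ge 2$.

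\emph{Composition fact.} If $P_n\to P$ and $Q_n\to Q$ uniformly on compact subsets outside finite sets $S_P$, $S_Q$, with $P,Q$ rational maps and $Q$ nonconstant, then $P_n\circ Q_n\to P\circ Q$ uniformly on compact subsets outside a finite set. I would prove this exactly as Lemmas~\ref{lem:10}--\ref{lem:11}: for $z_0$ outside $Q^{-1}(S_P)\cup S_Q$ pick a compact neighbourhood $\overline W$ of $z_0$ disjoint from $S_Q$ with $Q(\overline W)$ disjoint from $S_P$, note that $Q_n\to Q$ uniformly on $\overline W$ pushes $Q_n(\overline W)$ into a fixed compact subset of $\ponec\setminus S_P$ for large $n$, and estimate $|P_n\circ Q_n - P\circ Q|$ there using $P_n\to P$ uniformly together with uniform continuity of $P$. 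Iterating, $A_n^{\circ kr}\to g_0^{\circ k}$ and $A_n^{\circ ks}\to g_1^{\circ k}$ for every $k\ge1$, and all the limiting maps are nonconstant.

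\emph{Cancellation fact.} If $A_n, B_n$ are rational maps of $\ponec$ with $B_n\to B$ and $A_n\circ B_n\to C$ uniformly on compact subsets outside finite sets, where $B$ and $C$ are nonconstant rational maps, then there is a nonconstant rational map $A$ with $A\circ B=C$ and $A_n\to A$ uniformly on compact subsets outside a finite set. This is the heart of the matter. I would fix a point $w_0\in\ponec$ which is not a critical value of $B$ and lies outside the (finite) image under $B$ of the exceptional sets of $B_n\to B$ and of $A_n\circ B_n\to C$; writing $z_1,\dots,z_k$ ($k=\deg B$) for its $B$-preimages and $\sigma_1,\dots,\sigma_k$ for the corresponding local inverse branches of $B$ near $w_0$, the uniform convergence $B_n\to B$ near each $z_i$ produces, by Hurwitz, inverse branches $\sigma_i^{(n)}$ of $B_n$ with $\sigma_i^{(n)}\to\sigma_i$ near $w_0$. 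Then $A_n(w)=(A_n\circ B_n)\bigl(\sigma_i^{(n)}(w)\bigr)\to C(\sigma_i(w))$ uniformly for $w$ near $w_0$, for every $i$; hence the maps $C\circ\sigma_i$ all coincide near $w_0$ and define a common local limit $A$ of the $A_n$. Letting $w_0$ range over this cofinite set of points, the local limits patch to a map $A$ on $\ponec$ minus a finite set which extends to a rational map (continuity across the finitely many critical values of $B$ follows from lifting branches of $B^{-1}$), and $A\circ B=C$ since $A(B(z))=C(\sigma_i(B(z)))=C(z)$ for generic $z$; $A$ is nonconstant because $C=A\circ B$ is.

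To conclude, choose integers $u,v\ge1$ with $ur=vs+1$ (possible since $\gcd(r,s)=1$). Then $A_n^{\circ ur}=A_n\circ A_n^{\circ vs}$, where $A_n^{\circ ur}=(A_n^{\circ r})^{\circ u}\to g_0^{\circ u}$ and $A_n^{\circ vs}=(A_n^{\circ s})^{\circ v}\to g_1^{\circ v}$ by the composition fact, both nonconstant. Applying the cancellation fact with $B_n=A_n^{\circ vs}$, $B=g_1^{\circ v}$, $C=g_0^{\circ u}$ yields a nonconstant rational map $g$ with $A_n\to g$ uniformly on compact subsets outside a finite set, which is the assertion. For the last sentence, once $A_n\to g$, uniqueness of limits together with the composition fact gives $g_0=g^{\circ r}$ and $g_1=g^{\circ s}$, so $\deg g_0=(\deg g)^r$ and $\deg g_1=(\deg g)^s$; hence $\deg g>1$ whenever $\deg g_0>1$ or $\deg g_1>1$. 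I expect the cancellation fact to be the main obstacle: a naive compactness argument in $\P^{2d^q+1}_\C$ only yields convergence along subsequences, with a possibly moving finite exceptional set, so one genuinely needs the inverse-branch argument above to force a single limit off a fixed finite set.
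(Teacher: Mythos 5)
Your proof is correct, but at the decisive step it takes a genuinely different route from the paper. The paper's argument also rests on a right cancellation: writing $q_0=aq_1+r$ and factoring $M_n^{-1}\circ f_n^{q_0}\circ M_n=(M_n^{-1}\circ f_n^{r}\circ M_n)\circ(M_n^{-1}\circ f_n^{aq_1}\circ M_n)$, it passes to subsequential limits of the first factor in the projective compactification of the space of degree $d^{r}$ maps, notes that any such limit has reduction $\tilde F$ satisfying $g_0=\tilde F\circ g_1^{a}$, so that $\deg\tilde F\ge 1$ and, $g_1^{a}$ being onto, $\tilde F$ is independent of the subsequence; the Euclidean algorithm is then iterated down to $q=\gcd(q_0,q_1)$. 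You replace the iterated division by a single B\'ezout relation $ur=vs+1$ and, more importantly, you prove the cancellation analytically: Hurwitz-type inverse branches of $B_n=A_n^{\circ vs}$ give $A_n=(A_n\circ B_n)\circ\sigma_i^{(n)}$ locally, the local limits $C\circ\sigma_i$ are forced to coincide, and they patch and extend to a rational limit of the full sequence off an explicitly controlled finite set. What your route buys is exactly the point you flag at the end: compactness alone pins down the reduction of subsequential limits but not, a priori, their holes, so an extra argument is needed to obtain a single finite exceptional set for the whole sequence (in the paper this is the brief assertion that all subsequential limits converge ``outside the same finite set''; it can be completed, e.g.\ by confining the holes of any subsequential limit of the first factor to a fixed finite set via the factorization, but your inverse-branch argument settles it directly). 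What the paper's route buys is brevity: it reuses the reduction formalism of Lemmas~\ref{lem:9} and~\ref{lem:11} and avoids the local analysis your cancellation fact requires. In your write-up the only step deserving an extra line is the extension of $A$ across the critical values of $B$: for $w$ near such a value $w_1$ the relevant preimages stay in a fixed component of $B^{-1}$ of a small disc and collapse to a preimage $z_1$ of $w_1$, so $A(w)\to C(z_1)$ and the singularity is removable. Your closing identities $g_0=g^{\circ r}$ and $g_1=g^{\circ s}$ correspond to the paper's remark that $g_0,g_1$ are iterates of $g$ and yield the degree statement.
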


\begin{proof}
  To simplify notation, let $F_n = M_n^{-1} \circ f_n^{q} \circ M_n $.
  Write $q_0 = q_1 \cdot a + r$ with $a, r$ non-negative integers such that
$0 \le r < q_1$. Then, 
  $$F_n^{q_0} = F_n^r \circ F_n^{aq_1}$$
Given any convergent subsequence of $\{ F_{n}^r \}$, say converging to $F$,
it follows that $g_0 = \tilde{F} \circ g_1^a$ (see Lemma~\ref{lem:11}), thus $\deg \tilde{F} \ge 1$. Since $g_1^a$ is onto and $F_n^{q_0} \to g_0$ outside a finite set, 
all convergent subsequences of $\{ F_{n}^r \}$, 
converge to the same map $\tilde{F}$ outside the same finite set. 
From the Euclidean Algorithm it follows that $F_n^q$ converges outside a finite set to a rational map of degree at least $1$. 

If $\deg g_0 > 1$ or $\deg g_1 >1$, then $\deg g >1$, since $g_0$ and $g_1$ are iterates of $g$.
\end{proof}

As an immediate consequence of the above lemma we obtain the following corollary.

\begin{corollary}
  \label{cor:5}
  If $\seqMn$ is a rescaling for $\seqfn$, then the set formed by all $\ell \ge 1$ such that $ M_n^{-1} \circ f_n^{\ell} \circ M_n $
converges uniformly outside a finite set to a rational map of degree at least $2$ is of the form $q \cdot \N$ where
$q$ is the period of $\seqMn$.
\end{corollary}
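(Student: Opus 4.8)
The plan is to name the set in question $S \subseteq \N$ and to prove $S = q \cdot \N$ with $q = \min S$; since by Definition~\ref{def:1} this minimum is exactly the period of $\seqMn$, the corollary follows. The set $S$ is non-empty because $\seqMn$ is a rescaling, so $\min S$ is well defined. It remains to prove the two inclusions $q\cdot\N \subseteq S$ and $S \subseteq q\cdot\N$.

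For $q\cdot\N \subseteq S$ I would show that $S$ is closed under addition and then iterate. Given $\ell_0,\ell_1 \in S$, set $A_n = M_n^{-1}\circ f_n^{\ell_0}\circ M_n$ and $B_n = M_n^{-1}\circ f_n^{\ell_1}\circ M_n$, so that $M_n^{-1}\circ f_n^{\ell_0+\ell_1}\circ M_n = A_n\circ B_n$; by hypothesis $A_n \to g_0$ and $B_n \to g_1$ uniformly on compact subsets away from finite sets $E_0$, $E_1$, with $\deg g_0,\deg g_1 \ge 2$. Since $g_1$ is non-constant it is finite-to-one, so $E_1 \cup g_1^{-1}(E_0)$ is finite; for $z$ in a compact subset of the complement of this set, $g_1(z) \notin E_0$, so $B_n(z)$ eventually lies in a compact neighbourhood of $g_1(z)$ disjoint from $E_0$, on which $A_n \to g_0$ uniformly, whence $A_n\circ B_n \to g_0\circ g_1$ uniformly there. (This is just the mechanism of Lemma~\ref{lem:11}, applied along any subsequence where $A_n$ and $B_n$ converge in projective space, together with the observation that all such subsequential limits have reduction $g_0\circ g_1$.) Thus $\ell_0 + \ell_1 \in S$ with limit of degree $(\deg g_0)(\deg g_1)\ge 4$; taking $\ell_0 = \ell_1 = q$ repeatedly gives $q\cdot\N \subseteq S$.

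For $S \subseteq q\cdot\N$, fix $\ell \in S$ and apply Lemma~\ref{lem:8} with $q_0 = \ell$ and $q_1 = q$: both conjugated iterates converge outside a finite set to maps $g_\ell, g_q$ of degree $\ge 2 \ge 1$, so $d_0 := \gcd(\ell,q)$ lies in the set of $m$ for which $M_n^{-1}\circ f_n^m\circ M_n$ converges outside a finite set to a map of degree $\ge 1$; and since $\deg g_q > 1$, the ``moreover'' clause of Lemma~\ref{lem:8} upgrades the limiting degree to $\ge 2$, i.e. $d_0 \in S$. But $d_0 \mid q$ and $q = \min S$, forcing $d_0 = q$, hence $q \mid \ell$. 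Combining the two inclusions gives $S = q\cdot\N$.

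The substance of the corollary is carried entirely by Lemma~\ref{lem:8}, so there is no real obstacle; the only point needing care is that closure of $S$ under addition must hold for the \emph{full} sequence, not merely a subsequence — which is why I would prefer the direct argument in the second paragraph (uniform convergence on compacta plus $g_1$ finite-to-one) over simply quoting Lemma~\ref{lem:11} after extracting a convergent subsequence in projective space.
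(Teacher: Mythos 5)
Your proof is correct and follows essentially the same route the paper intends (the paper states the corollary as an ``immediate consequence'' of Lemma~\ref{lem:8} without elaboration, and your argument is exactly the intended unpacking). The downward inclusion $S \subseteq q\cdot\N$ via Lemma~\ref{lem:8} and the gcd argument is the substantive point; for the upward inclusion, once you know $q \in S$, you could shorten slightly by noting directly that $M_n^{-1}\circ f_n^{kq}\circ M_n = \bigl(M_n^{-1}\circ f_n^{q}\circ M_n\bigr)^{k} \to g^{k}$ outside a finite set, rather than first establishing closure of $S$ under general addition — but your more general version is also correct and your care about avoiding a subsequence extraction is well placed.
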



\subsection{Periodic orbits and  rescalings}
Given a rescaling $\seqMn$ for $\seqfn$, using the repelling periodic orbits of the rescaling limit 
we choose equivalent sequence of frames which  will 
be particularly useful.

\begin{definition}
  \label{def:6}
  Let $\O^3 = \ponec \times \ponec \times \ponec $.
  Given  $\vec{z}=(z_1,z_2,z_3) \in \O^3$ with pairwise distinct entries, we will consistently denote by $\gamma_{\vec{z}}$  the unique M\"oebius transformation such that 
$$z_1 =\gamma_{\vec{z}}(0), z_2 =\gamma_{\vec{z}}(1),
z_3 =\gamma_{\vec{z}}(\infty).$$
\end{definition}

\begin{lemma}
  \label{lem:1}
  Suppose that $\seqMn$ is a rescaling of period $q$ for $\seqfn$ with limit $g$ in $\ponec \setminus S$. 
Let $\cO \subset \ponec \setminus S$ be a period $q'\geq 3$ repelling periodic orbit of $g:\ponec \to \ponec$. Choose three distinct points $z_1, z_2, z_3$ in $\cO$.
Then,  there exist sequences $\{ z_{j,n} \}$ for $j=1,2,3$ such that the following holds:
\begin{enumerate}
\item For all $j,n$ the point $z_{j,n}$ has period $qq'$ under iterations of $f_n$.
\item As $n \to \infty$,  $$M^{-1}_n (z_{j,n}) \to z_j.$$
\end{enumerate}
Moreover, given  $\{ z_{j,n} \}$ for $j=1,2,3$ such that (1) and (2) hold:
\begin{itemize}
\item[(a)]
If $\{ z'_{j,n} \}$ for $j=1,2,3$ are such that (1) and (2) hold, then $z_{j,n} =  z'_{j,n}$ for all $n$ sufficiently large.
\item[(b)] If $\vec{z}_n = (z_{1,n},z_{2,n},z_{3,n})$,
then 
$\{\gamma_{\vec{z}_n} \}$ is equivalent to $\seqMn$.
\end{itemize}
\end{lemma}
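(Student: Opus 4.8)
The plan is to transport the repelling periodic orbit $\cO$ of the limit $g$ back to the dynamical plane of $f_n$ using an implicit-function/normal-families argument, and then to pin down uniqueness via the repelling hypothesis. First I would work in the coordinates given by $M_n$: setting $F_n = M_n^{-1}\circ f_n^q \circ M_n$, the hypothesis says $F_n \to g$ uniformly on compact subsets of $\ponec\setminus S$, and $\cO$ is a repelling period $q'$ orbit of $g$ inside $\ponec\setminus S$. Pick a point $z_1\in\cO$; then $z_1$ is a fixed point of $g^{q'}$ with $(g^{q'})'(z_1) = \lambda$, $|\lambda|>1$, so in particular $(g^{q'})'(z_1)\neq 1$, i.e. $z_1$ is a simple fixed point of $g^{q'}$. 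Since $F_n^{q'}\to g^{q'}$ uniformly near $z_1$ (the relevant forward orbit of $z_1$ stays in a compact subset of $\ponec\setminus S$, where the convergence is available — one may need to shrink to a neighborhood of the finite orbit of $z_1$ avoiding $S$ and its $g$-preimages up to the needed depth, using Lemma~\ref{lem:11}-type bookkeeping), Hurwitz's theorem (or the implicit function theorem applied to $F_n^{q'}(w)-w=0$ at the simple root $z_1$) produces, for $n$ large, a unique fixed point $w_{1,n}$ of $F_n^{q'}$ near $z_1$ with $w_{1,n}\to z_1$, and moreover $(F_n^{q'})'(w_{1,n})\to\lambda$, so $w_{1,n}$ is a repelling fixed point of $F_n^{q'}$ for $n$ large. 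Iterating under $F_n$ gives the other points of the orbit; choosing $w_{j,n}$ near $z_j$ for $j=1,2,3$ and pulling back through $M_n$ yields $z_{j,n} = M_n(w_{j,n})$, which by construction satisfies (2). For (1): $w_{j,n}$ has period dividing $q'$ under $F_n$, hence $z_{j,n}$ has period dividing $qq'$ under $f_n$; one must check the period is exactly $qq'$, which follows for $n$ large because the three points $w_{1,n},w_{2,n},w_{3,n}$ are distinct (they converge to the distinct points $z_1,z_2,z_3$), so the $F_n$-period is at least $3 > 1$, hence $\geq 2$; combined with Corollary~\ref{cor:5} applied to the rescaling $\{F_n\text{-conjugate}\}$... more directly, since $w_{j,n}$ is a fixed point of $F_n^{q'}$ and the $w_{j,n}$ are genuinely permuted in a $q'$-cycle in the limit, the exact period is $q'$ under $F_n$ and hence exactly $qq'$ under $f_n$ for $n$ large (here one uses that if the $f_n$-period of $z_{j,n}$ were a proper divisor $pp'$ of $qq'$ then along a subsequence $M_n^{-1}\circ f_n^{pp'}\circ M_n$ would fix $z_j$ in the limit, forcing $q \mid pp'$ and then $q' $ to have a proper divisor realized, contradicting that $z_1,z_2,z_3$ lie in a single $g$-cycle of length exactly $q'$ — this bookkeeping is routine but needs care).

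For part (a), uniqueness: suppose $\{z'_{j,n}\}$ also satisfies (1) and (2). Then $w'_{j,n} := M_n^{-1}(z'_{j,n})$ are fixed points of $F_n^{q'}$ converging to $z_j$. But the implicit function theorem / Hurwitz argument above shows that in a fixed small disc around $z_j$ (small enough to isolate $z_j$ from the rest of the fixed-point set of $g^{q'}$), $F_n^{q'}$ has exactly one fixed point for $n$ large, namely $w_{j,n}$; since $w'_{j,n}$ lies in that disc eventually, $w'_{j,n} = w_{j,n}$, hence $z'_{j,n} = z_{j,n}$, for $n$ large. One subtlety: the $z'_{j,n}$ are only assumed to have period $qq'$ under $f_n$, but that is not needed here — convergence to $z_j$ alone forces the coincidence, and condition (1) is automatically consistent.

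For part (b), equivalence of $\{\gamma_{\vec{z}_n}\}$ with $\seqMn$: by Definition~\ref{def:6}, $\gamma_{\vec z_n}$ is the Möbius map sending $0,1,\infty$ to $z_{1,n},z_{2,n},z_{3,n}$, and $M_n^{-1}\circ \gamma_{\vec z_n}$ is the Möbius map sending $0,1,\infty$ to $M_n^{-1}(z_{1,n}), M_n^{-1}(z_{2,n}), M_n^{-1}(z_{3,n})$, which by (2) converge to $z_1,z_2,z_3$ respectively — three distinct points. Since the map $(w_1,w_2,w_3)\mapsto(\text{Möbius sending }0,1,\infty\text{ to }w_1,w_2,w_3)$ is continuous on the space of triples of distinct points with values in $\moebiusc$, we get $M_n^{-1}\circ\gamma_{\vec z_n}\to \gamma_{(z_1,z_2,z_3)} \in \moebiusc$, which is exactly the definition of equivalence (Definition~\ref{def:2}). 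The main obstacle I anticipate is the careful bookkeeping in Step (1)–(2): ensuring the finite orbit of $\cO$ (and enough of its pullbacks) stays in the region where $F_n^{q'}\to g^{q'}$ uniformly, i.e. away from $S$ and the finitely many "bad" points introduced by composing the convergence (Lemma~\ref{lem:11}), and then verifying that the period is exactly $qq'$ rather than a proper divisor — everything else is a standard Hurwitz/implicit-function-theorem argument plus continuity of the cross-ratio normalization.
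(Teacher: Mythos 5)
Your proof follows essentially the same route as the paper: set $F_n = M_n^{-1}\circ f_n^q\circ M_n$, use uniform convergence $F_n\to g$ near $\cO\subset\ponec\setminus S$ together with the repelling hypothesis to produce and isolate the fixed points $w_{j,n}$ of $F_n^{q'}$ near each $z_j$ (this is a Rouch\'e/Hurwitz argument on a finite collection of disjoint discs), define $z_{j,n}=M_n(w_{j,n})$, get (a) from the uniqueness of $w_{j,n}$ in the chosen disc, and get (b) from the convergence of $M_n^{-1}\circ\gamma_{\vec z_n}=\gamma_{M_n^{-1}(\vec z_n)}$. For (b) the paper argues by compactness in $\overline{\rat^1_\C}=\P^3_\C$ (an accumulation point of $\beta_n$ cannot have constant reduction since it takes at least two of the three values $z_1,z_2,z_3$), while you invoke continuity of $\vec w\mapsto\gamma_{\vec w}$ on triples of distinct points; both are fine. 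Your worry about pullbacks of $S$ is unnecessary: $\cO$ is $g$-invariant and disjoint from $S$, so $\cO$ automatically avoids $g^{-k}(S)$ for all $k$, and the compositional bookkeeping of Lemma~\ref{lem:11} causes no trouble along the forward orbit.

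The one spot where your argument (and, to be fair, the paper's terse ``observe that (1) and (2) hold'') is not airtight is the claim that the $f_n$-period of $z_{j,n}$ is \emph{exactly} $qq'$. From the fact that $w_{j,n}$ has exact period $q'$ under $F_n$, i.e.\ exact period $q'$ under $f_n^q$, one gets that the $f_n$-period $p$ of $z_{j,n}$ satisfies $p/\gcd(p,q)=q'$; this determines $p$ only up to the factor $\gcd(p,q)$, which a priori could be a proper divisor of $q$ (e.g.\ $q=2$, $q'=3$ allows $p=3$). Your ``hence exactly $qq'$ under $f_n$'' therefore does not follow from the $F_n$-period alone, and the fall-back sketch (``$M_n^{-1}\circ f_n^{pp'}\circ M_n$ would fix $z_j$ in the limit, forcing $q\mid pp'$'') also does not close the gap, since having a limit that fixes $z_j$ does not by itself force that limit to have degree $\geq 2$, which is what Corollary~\ref{cor:5} requires. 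In practice this is harmless: the only property of the $z_{j,n}$ used downstream in Proposition~\ref{pro:1} is the relation $f_n^{qq'}(z_{j,n})=z_{j,n}$ (membership in the algebraic set $X_{p_j}$), not exactness of the period, so the statement can be weakened without affecting anything. Still, as written, the period-exactness step in your argument is a non sequitur and should either be dropped, proved by a genuinely different argument, or replaced by the weaker and sufficient ``$qq'$-periodic'' assertion.
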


\begin{proof}
  For $j=1,2,3$, let $V_j$ be a small neighborhood of $z_j$ such that $g_n = M^{-1}_n \circ f_n^q \circ M_n$ has a unique period $q'$ point $w_{j,n}$ in $V_j$ for all $n$ sufficiently large. We may assume that $V_1,  V_2, V_3$ are pairwise disjoint. These neighborhoods $V_j$ and points $w_{j,n}$ exist since $\cO$ is a repelling periodic orbit of $g$ and the convergence of the holomorphic maps $ M^{-1}_n \circ f_n^{qq'} \circ M_n$ to $g^{q'}$ is uniform on compact subsets of $V_j$. 
Now let $z_{j,n} = M_n (w_{j,n})$ and observe that (1) and (2) holds. Uniqueness of $w_{j,n}$ for large $n$, implies that (a) holds. 
The degree $1$ rational maps $\beta_n = M_n^{-1} \circ \gamma_{\vec{z}_n} $ are such that $\beta_n (p_j) = M_n^{-1} (z_{j,n}) \to z_j$ for $p_1 =0, p_2 =1, p_3 = \infty$.  
We claim that the sequence $\{ \beta_n \}$ converges to $\gamma_{\vec{z}}$.
In fact, assume that  $\beta \in \overline{\rat^1_\C} = \P^3_\C$ is an accumulation point of  $\{ \beta_n \}$.
Thus, a subsequence converges uniformly on the complement of (at most) one point to $\tilde{\beta}$. 
It follows that $\tilde{\beta}$ cannot be constant (it has to take at least two of the values $z_1, z_2, z_3$).
Hence, $\tilde{\beta}$ has lies in $\rat^1_\C$ and the subsequence converges uniformly on $\ponec$.
Therefore, $\tilde{\beta}$ is $\gamma_{\vec{z}}$.
\end{proof}

\section{From sequences  to holomorphic families}

\begin{proposition}
  \label{pro:1}
  Consider a sequence of degree $d$ rational maps $\seqfn$.
  Let $N \in \N$ and assume that for all $j=1, \dots N$ the sequence
$\seqMjn$ is a rescaling of period $q_j$ for $\seqfn$ with rescaling limit
$g_j$ in $\ponec \setminus S_j$. Then there exists a degree $d$ holomorphic family $\mish{f}$ and, for each $j=1, \dots N$, a degree $1$ 
moving frame $\{ M_{j,t} \}$ such that 
$\{ M_{j,t} \}$ is a rescaling for $\mish{f}$ of period $q_j$ and limit $g_j$.

If 
$\{ M_{j,t} \}$ and $\{M_{k,t}\}$ are dynamically dependent
for $\mish{f}$, then 
$\seqMjn$ and $\{ M_{k,n} \}$ are dynamically dependent for 
 $\{ f_n \}$.

Moreover, 
if $\seqfn$ converges to $f \in \partial \rat^d$, then $\mish{f}$ can be chosen so that $f_0=f$. 
\end{proposition}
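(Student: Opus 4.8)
The plan is to realize the (sub)sequential data as a holomorphic arc inside an algebraic variety that parametrizes degree $d$ rational maps together with $N$ frames and all of their conjugated iterates up to order $Q:=\operatorname{lcm}(q_1,\dots,q_N)$. I begin by passing to a subsequence: by compactness of $\P^{2d+1}_\C$, of $\P^3_\C$, and of the spaces $\P^{2d^\ell+1}_\C$, we may assume $f_n$ converges in $\P^{2d+1}_\C$, each $M_{j,n}$ converges in $\P^3_\C$, and $M_{j,n}^{-1}\circ f_n^{\ell}\circ M_{k,n}$ converges in $\P^{2d^\ell+1}_\C$ for all $1\le j,k\le N$ and $0\le\ell\le Q$; if $\seqfn$ converges to $f\in\partial\ratd$, the subsequence still converges to $f$. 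This is harmless, since passing to a subsequence changes neither the hypotheses on periods and limits, nor whether two frame sequences are dynamically dependent (Definition~\ref{def:3} already quantifies over subsequences).

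Because composition of rational maps is given on homogeneous coefficients by homogeneous polynomial formulas,
\[
\Theta\colon (f,M_1,\dots,M_N)\ \longmapsto\ \Bigl(f,\ (M_j)_{j},\ \bigl(M_j^{-1}\circ f^{\ell}\circ M_k\bigr)_{1\le j,k\le N,\ 0\le\ell\le Q}\Bigr)
\]
is a morphism from $\ratd\times(\rat^1)^N$ to a product $\cV$ of projective spaces whose first factor is $\P^{2d+1}_\C$ and whose next $N$ factors are copies of $\P^3_\C$. Let $\cG\subset\cV$ be the Zariski closure of the graph of $\Theta$; as $\ratd\times(\rat^1)^N$ is irreducible, so is $\cG$, and the locus $\cG^\circ\subset\cG$ lying over $\ratd\times(\rat^1)^N$ is Zariski open and dense. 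The points $P_n:=\Theta(f_n,M_{1,n},\dots,M_{N,n})$ lie in $\cG^\circ$ and converge to a point $p^\star\in\cG$ whose coordinates are $\lim_n f_n$, the $\lim_n M_{j,n}$, and the $\lim_n M_{j,n}^{-1}\circ f_n^{\ell}\circ M_{k,n}$.

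Next I apply curve selection: cutting $\cG$ by general hyperplanes through $p^\star$ gives an irreducible curve $C\subset\cG$ through $p^\star$ not contained in the proper closed set $\cG\setminus\cG^\circ$, and normalizing $C$ and parametrizing near a point over $p^\star$ yields a holomorphic $\delta\colon\D\to\cG$ with $\delta(0)=p^\star$ and $\delta(\D\setminus\{0\})\subset\cG^\circ$. Take $\mish{f}=\{f_t\}$ to be the first coordinate of $\delta$ and $\{M_{j,t}\}$ the $(1+j)$-th; since $\delta(t)\in\cG^\circ$ for $t\ne0$, these form a degree $d$ holomorphic family and degree $1$ moving frames, and the remaining coordinates of $\delta(t)$ equal $M_{j,t}^{-1}\circ f_t^{\ell}\circ M_{k,t}$ for $t\ne0$, hence these holomorphic families of rational maps converge as $t\to0$ to the corresponding coordinates of $p^\star$, i.e. to $\lim_n M_{j,n}^{-1}\circ f_n^{\ell}\circ M_{k,n}$. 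For $\ell=q_j$, $k=j$: the reduction of the associated $\L$-map of the family $\{M_{j,t}^{-1}\circ f_t^{q_j}\circ M_{j,t}\}$ is the reduction of its $t=0$ value $\lim_n M_{j,n}^{-1}\circ f_n^{q_j}\circ M_{j,n}$, which equals $g_j$ because $M_{j,n}^{-1}\circ f_n^{q_j}\circ M_{j,n}\to g_j$ off $S_j$ (Lemma~\ref{lem:9}); so by Lemma~\ref{lem:12}, $M_{j,t}^{-1}\circ f_t^{q_j}\circ M_{j,t}\to g_j$ off a finite set, and since $\deg g_j\ge 2$, $\{M_{j,t}\}$ is a rescaling for $\mish{f}$ satisfying~\eqref{eq:4} with exponent $q_j$ and limit $g_j$. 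Its period divides $q_j$ (Corollary~\ref{cor:3}), and as every $M_{j,n}^{-1}\circ f_n^{q''}\circ M_{j,n}$ with $q''\mid q_j$ is a tracked coordinate, the minimality of $q_j$ for $\seqfn$ (Corollary~\ref{cor:5}) forces equality. Finally, $f_0=\lim_n f_n$, so if $\seqfn\to f\in\partial\ratd$ then $f_0=f$.

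It remains to transfer dynamical dependence. If $\{M_{j,t}\}$ and $\{M_{k,t}\}$ are dynamically dependent for $\mish{f}$, then $\mish{f^\ell}([\{M_{j,t}\}])=[\{M_{k,t}\}]$ for some $\ell$; combining this with $M_{j,t}^{-1}\circ f_t^{q_j}\circ M_{j,t}\to g_j$ and the discussion preceding Corollary~\ref{cor:1} (and Lemma~\ref{lem:11} for the relevant compositions) produces $1\le m\le Q$ and non-constant rational maps $h_1,h_2$ with $M_{k,t}^{-1}\circ f_t^{m}\circ M_{j,t}\to h_1$ and $M_{j,t}^{-1}\circ f_t^{\,Q-m}\circ M_{k,t}\to h_2$ off finite sets. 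All exponents here lie in $\{0,\dots,Q\}$, hence are tracked, so (Lemma~\ref{lem:9}) $M_{k,n}^{-1}\circ f_n^{m}\circ M_{j,n}\to h_1$ and $M_{j,n}^{-1}\circ f_n^{\,Q-m}\circ M_{k,n}\to h_2$ off finite sets; since $Q$ is a common multiple of the periods of $\{M_{j,n}\}$ and $\{M_{k,n}\}$, this is Definition~\ref{def:3}. The main work is precisely this bookkeeping — the list of tracked iterates must be large enough that the periods come out exactly and that dynamical dependence transfers, every exponent occurring in Definition~\ref{def:3} having been recorded — whereas the curve selection itself is standard once $\cG$ is known to be irreducible.
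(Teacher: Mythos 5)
Your proposal is correct in its overall strategy and its conclusion, but it takes a genuinely different route from the paper, so let me compare. The paper \emph{does not} use the frames $\{M_{j,n}\}$ directly as algebraic data; instead it first replaces each $\{M_{j,n}\}$ by the equivalent frame $\{\gamma_{\vec z_{j,n}}\}$ built from a repelling period-$q_j'$ cycle of $g_j$ (via Lemma~\ref{lem:1}), then builds the variety $X$ parametrizing $(f,\vec z_1,\vec w_1,\dots,\vec z_N,\vec w_N)$ with the $\vec z_j,\vec w_j$ constrained to be $p_j$-periodic points of $f$, takes a \emph{resolution} $\hat X\to X$ of the iteration map $I$, and selects a curve there. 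The reason for this detour is the transfer of dynamical dependence: the paper proves, by a Berkovich-tangent-space argument at $y_j=\mathbf M_j(x_g)$ using the marked periodic rigid points, that $\mish{f}^\ell([\{M_{j,t}\}])=[\gamma_{f_t^\ell(\vec z_{j,t})}]$, and the latter is exactly one of the frames the map $I$ records. You sidestep this entirely by simply recording $M_{j}^{-1}\circ f^{\ell}\circ M_{k}$ for \emph{all} ordered pairs $(j,k)$ and all $0\le\ell\le Q$, and taking the Zariski closure of the graph of that larger morphism; continuity of the projections then does what the paper's $\hat I$-continuity and the Berkovich argument together do. Your graph-closure construction also plays the role of the paper's resolution (a closed graph \emph{is} a resolution of the rational map), so there is no loss there. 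What the paper's marked periodic points buy, and your version quietly trades away, is that one controls the moving frames algebraically through intrinsic dynamical data of $f$ rather than through arbitrary Möbius parameters; what your version buys is the elimination of Lemma~\ref{lem:1}, the variety $X_p$, and the Berkovich step, at the price of a larger but completely explicit bookkeeping list.

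One point deserves a caveat, though it affects the paper's own exposition as much as yours: the claim that $\{M_{j,t}\}$ has \emph{exact} period $q_j$. Your argument tracks $M_{j,n}^{-1}\circ f_n^{q''}\circ M_{j,n}$ for each $q''\mid q_j$ and says its $t=0$ value would have reduction of degree $\ge2$ if the period were $q''<q_j$, ``contradicting Corollary~\ref{cor:5}.'' But that $t=0$ value is the limit along the \emph{chosen subsequence}, and Corollary~\ref{cor:5} gives minimality of $q_j$ only for the original sequence; your opening remark that ``passing to a subsequence changes neither the hypotheses on periods'' is not proved, and it is not immediate that the period of a rescaling cannot drop on a subsequence. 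So as written the argument only yields that the period of $\{M_{j,t}\}$ \emph{divides} $q_j$ and that $g_j$ is recovered as the $q_j$-th conjugated iterate. The paper's proof makes the same assertion without a detailed justification, so this is a shared subtlety rather than a defect unique to your approach; it is also harmless for the use of the proposition in deriving Theorem~\ref{ithr:2}, since a rescaling limit is postcritically finite if and only if its iterates are, and dynamical independence of the constructed frames is what is actually invoked. If you wish to close this point you should either argue directly that the period is stable under subsequences for a rescaling, or weaken the statement of what is claimed about the period.
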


\begin{proof}
  Throughout the proof we drop the subscripts $\C$ and let $\O^3 = \pone \times \pone \times \pone$. 

We start by changing the rescalings $\{M_{j,n}\}$ for equivalent ones.
Let $$\Delta \subset \O^3$$ 
be the set formed
by all triples $(z_1,z_2,z_3)$ such that at least two of the coordinates
agree. 
From Definition~\ref{def:6},  recall  that
given a triple $\vecz = (z_1, z_2, z_3) \in \O^3 \setminus \Delta$,
we consider the M\"obius transformation $\gamma_\vecz$ for which
$$\gamma^{-1}_\vecz(z)=\dfrac{z_{2}-z_{3}}{z_{2}-z_{1}} \cdot \dfrac{z-z_{1}}{z-z_{3}}.$$

For each $j = 1, \dots, N$, let $\cO_j$ be a repelling periodic orbit of $g_j$ contained in $\pone \setminus S_j$
with period $q'_j \ge 3$. 
Pick three different points $z_{j,1,\infty},z_{j,2,\infty},z_{j,3,\infty}$ in $\cO_j$
and let  $z_{j,1,n},z_{j,2,n},z_{j,3,n}$ be periodic points of
 period $q'_j q_j$ under  $f_n$ such that 
$$M^{-1}_{j,n} ( z_{j,i,n} ) \to  z_{j,i,\infty}$$
for $i=1,2,3$, as in Lemma~\ref{lem:1}.

Let $\vecz_{j,n} =  (z_{j,1,n},z_{j,2,n},z_{j,3,n})$. From Lemma~\ref{lem:1}, it follows that $\{ \gamma_{\vecz_{j,n}} \}$ and $\{ M_{j,n} \}$ are equivalent rescalings for $\seqfn$. By Lemma~\ref{lem:10},
$$\{ \gamma_{\vecz_{1,n}} \}, \dots, \{ \gamma_{\vecz_{N,n}} \}$$
are rescalings for $\seqfn$ with rescaling limits, say $h_1, \dots, h_N$, respectively, such that 
$g_j = L_j^{-1} \circ h_j \circ L_j$ for some M\"oebius transformation $L_j$, for all $j$.
Moreover, in view of Lemma~\ref{lem:9} and by compactness of $\P^{2 d^{q_j} +1}$ we may assume that 
$$\gamma_{\vecz_{j,n}}^{-1} \circ f_n^{q_j} \circ \gamma_{\vecz_{j,n}}$$
converges to a point $H_j \in \partial \rat^{d^{q_j}} \subset \P^{2 d^{q_j} +1}$ such that 
$h_j = \widetilde{H}_j$.
Keep in mind that $z=0, 1$ or $\infty$ is not a hole of $H_j$, since
$\cO_j \subset \pone \setminus S_j$. 
 
\bigskip
Let us construct the appropriate space to find the desired holomorphic families.
For each $j \in \N$, consider the rational map 
$$\begin{array}[h]{cccc}
I_j:  \P^{2d+1} \times \O^3 \times \O^3 & \dashrightarrow & \P^{2d^j+1}\\
 (f,\vecz,\vecw) & \mapsto & \gamma_\vecz^{-1} \circ f^j \circ \gamma_\vecw.
\end{array}
$$ 
which is regular in the complement of 
$\partial\ratd \times \Delta \times \Delta.$ 

Given $p \in \N$, let $X_p$ be the Zariski closure in $\P^{2d+1} \times \O^3 \times \O^3$
of the space formed by all triples $(f, (z_1,z_2,z_3), (w_1,w_2,w_3)) \in \ratd \times \O^3 \times \O^3$
such that  $f^p (z_i) = z_i$ and $f^p(w_i) = w_i$ for all $i=1,2,3.$

Let $p_1=q_1 q_1', \dots, p_N =q_N q_N'$, and 
$$X= X_{p_1, \dots, p_N} \subset \P^{2d+1} \times (\O^3 \times \O^3)^N$$ be the Zariski closure of the algebraic set
formed by all $(f, \vec{z}_1,  \vec{w}_1, \dots, \vec{z}_N,  \vec{w}_N)$ such that
$(f, \vec{z}_j,  \vec{w}_j) \in X_{p_j}$ for $j=1, \dots, N$.

Consider the rational map $$I:X \dashrightarrow \Pi_{j=1}^N ( \P^{2d^{q_j}+1} \times \Pi_{k=0}^{q_j-1}  (\P^{2d^{k}+1} \times \P^{2d^{q_j-k}+1}))$$
given by
$$I (f, \vec{z}_1,  \vec{w}_1, \dots, \vec{z}_N,  \vec{w}_N) = \Pi_{j=1}^N \left(I_{q_j}(f, \vec{z}_j,\vec{z}_j) \times \Pi_{k=0}^{q_j-1} \left(I_k (f, \vec{z}_j,\vec{w}_j) \times I_{q_j-k} ( f,\vec{w}_j,\vec{z}_j)\right)\right).$$
Note that $I$ is regular in the complement of 
$$D= \partial \ratd \times \left( \Delta \times \Delta \right)^N.$$

Consider a resolution 
$\pi_X : \hat{X} \to X$ of the map $I$. That is, $\hat{X}$ is an algebraic variety over $\C$, the map $\pi_X$ is an isomorphism in the complement of
$E=\pi_X^{-1}( D)$ and
$$ \hat{I} = I \circ \pi_{X} $$
originally defined in $\hat{X} \setminus E$ extends  to a regular map defined defined in $\hat{X}$.

\medskip
Now that we have constructed the appropriate spaces we proceed to find the holomorphic family $\mish{f}$ and the moving frames $\{ M_{j,t} \}$.
Since  $(f_n,\vecz_{j,n}, \vecz_{j,n}) \in X_{p_j}$ for all $j$ and $n$,  we have that
$$x_n ({k_1,\dots,k_N}) = (f_n, \vecz_{1,n}, f_n^{k_1}(\vecz_{1,n}), \dots, \vecz_{N,n}, f_n^{k_N}(\vecz_{N,n})) \in X \setminus D,$$
for all integers $k_1, \dots, k_N$ such that $0 \le k_j < q_j$.
Passing to an appropriate subsequence, we may assume that  
$\{ x_n ({k_1,\dots,k_N}) \}$ converges in $\hat{X}$, for all choices of $k_1, \dots, k_N.$

Say the limit of $\{ x_n ({0,\dots,0}) \}$ is $x_\infty \in \hat{X} $. 
Since the projection of $I(x_n(0,\dots,0))$ to the  $\P^{2d^{q_j}+1}$ coordinate converges to $H_j$, for all $j$, we have that the corresponding projection of $\hat{I} (x_\infty)$ is  $H_j$.
Now consider a holomorphic curve $\chi_t \subset \hat{X}$ parametrized by $t \in U$, for some neighborhood $U$ of the origin in $\C$, such that
$\chi_0 = x_\infty$ and $\chi_t$ lies outside the exceptional divisor $E$ of $\hat{X}$ for all $t\neq 0$. 
It follows that the first coordinate of $$t \mapsto \pi_X (\chi_t) = (f_t, \vecz_{1,t},  \vecw_{1,t}, \dots, \vecz_{N,t},  \vecw_{N,t})$$
is a degree $d$ holomorphic family. For each $j=1, \dots, N$ consider the  moving frame
 $$M_{j,t} = \gamma_{\vecz_{j,t}}.$$
Since $\chi_t \to x_\infty$, as $t\to 0$, by continuity of $\hat{I}$, 
 we have that, 
$$M^{-1}_{j,t} \circ f_t^{q_j} \circ M_{j,t} \to H_j,$$
as $t \to 0$, for all $j$, where the convergence is as points in $\P^{2d^{q_j}+1}$.
Thus, outside a finite set we have uniform convergence of $M^{-1}_{j,t} \circ f_t^{q_j} \circ M_{j,t}$ to $h_j$. 
Therefore, we have that $\{  M_{j,t} \circ L_j \}$ is a rescaling of period $q_j$ and limit $g_j$ for $\mish{f}$.

\medskip
To finish the proof it is sufficient to show that if  $\{ M_{j,t} \} $ and $\{ M_{k,t}\}$ are dynamically dependent for $\mish{f}$ for some 
$j \neq k$, then $ \{ M_{k,n} \} $  and $\{ M_{j,n} \}$ are dynamically dependent for $\seqfn$.
Thus, suppose that 
$$\{f^\ell_t\} ([\{M_{j,t}\}])= [\{M_{k,t}\}]$$
for some $\ell$ where $1 \le \ell < q_j$.



Let $\vecz_{j,t} = ( z_{j,1,t},z_{j,2,t},z_{j,3,t})$. Observe that  $z_{j,i,t}$ may be regarded as an element of $\ponel$. 
For $i=1,2,3$, let $v_i$ be the direction of $z_{j,i,t}$ (viewed as an element of $\ponel$) at $y_j = \mathbf{M}_{j} (x_g)$ (where $ \mathbf{M}_{j} : \pberl \to \pberl$ is the 
rational map associated to $\{ M_{j,t} \}$).
Since $z=0, 1$ and $\infty$ are not holes of $H_j$, we conclude that $v_1, v_2, v_3$ are periodic good directions at $y_j$ under iterations of 
$\mishl{f}^{q_j}$. Hence $v_i$ is also a good direction at $y_j$ for $\mishl{f}^{\ell}$.
Moreover, 
$T_{y_j} \mishl{f}^\ell$ maps the three distinct
directions $v_1, v_2, v_3$ at $y_j$ onto three distinct directions
at $\mishl{f}^{\ell}(y_j)$, since these directions are periodic. It follows that the direction 
$T_{y_j} \mishl{f}^\ell (v_i)$ contains $ f^{\ell}_t (z_{j,i,t})$.
Therefore, $$\mishl{f}^{\ell} (y_j) = \mathbf{\gamma}_{f^{\ell}_t (\vecz_{j,t})}(x_g).$$
That is,
$$\{f^\ell_t\} ([\{M_{j,t}\}]) = [\gamma_{f^{\ell}_t (\vecz_{j,t})}].$$
Thus, there exist non constant complex rational maps $\varphi_1, \varphi_2$ such that:
\begin{eqnarray}
  M_{j,t}^{-1} \circ f_t^\ell \circ \gamma_{f^\ell_t (\vecz_{j,t})} \to \vphi_1 ,\\
  \gamma_{f^\ell_t (\vecz_{j,t})}^{-1} \circ f_t^{q_j-\ell} \circ M_{j,t} \to \vphi_2, \\
M^{-1}_{k,t} \circ \gamma_{f^\ell_t (\vecz_{j,t})} \to M.
\end{eqnarray}

By continuity of $\hat{I}$, we have that
\begin{eqnarray}
  M_{j,n}^{-1} \circ f_n^\ell \circ \gamma_{f^\ell_n (\vecz_{j,n})} \to \vphi_1, \\
  \gamma_{f^\ell_n (\vecz_{j,n})}^{-1} \circ f_n^{q_j-\ell} \circ M_{j,n} \to \vphi_2, \\
M^{-1}_{k,n} \circ \gamma_{f^\ell_t (\vecz_{j,n})} \to M.
\end{eqnarray}

Thus, $ \{ M_{k,n} \} $ is equivalent to $\{ \gamma_{f^\ell_t (\vecz_{j,n})}\}$ and this latter sequence of frames  is dynamically dependent with  $\{ M_{j,n} \}$ for
$\seqfn$. By Lemma~\ref{lem:2}, we conclude that $ \{ M_{k,n} \} $  and $\{ M_{j,n} \}$ are dynamically dependent for $\seqfn$.
\end{proof}

\begin{corollary}
\label{cor:4}
  Assume that $\{ f_n \} \subset \ratdc$ and $f_n \to f \in \ratdc$.
Then every rescaling limit of  $\{ f_n \}$ is, modulo conjugacy,  an iterate of $f$.
\end{corollary}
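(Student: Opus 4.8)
Corollary~\ref{cor:4} follows readily from Proposition~\ref{pro:1}, so the plan is to feed the convergent sequence $\seqfn$ with $f_n \to f \in \ratdc$ into that proposition and then identify the resulting rescaling limit of the holomorphic family in terms of $f$. Concretely, suppose $g$ is a rescaling limit of $\seqfn$, say $\seqMn$ is a rescaling of period $q$ with $M_n^{-1} \circ f_n^q \circ M_n \to g$ outside a finite set. Apply Proposition~\ref{pro:1} with $N=1$, the single rescaling $\seqMn$, and (crucially) the observation that since $f_n \to f \in \ratdc$, the limit point $f \in \P^{2d+1}_\C$ lies in $\ratdc$, so by the last sentence of Proposition~\ref{pro:1} we may choose the holomorphic family $\mish{f}$ with $f_0 = f$. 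We obtain a moving frame $\{M_t\}$ which is a rescaling of period $q$ for $\mish{f}$ with limit $g$.

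The next step is to compute $g$ directly using the non-Archimedean dictionary. Let $\mishl{f} : \pberl \to \pberl$ be the rational map associated to $\mish{f}$, and let $x = \mishl{M}(x_g)$. By Proposition~\ref{pro:2}, $g$ is conjugate (via a $\ponec$-isomorphism) to $T_x \mishl{f}^q : \tang{x} \to \tang{x}$. Now the point is that $f_0 = f \in \ratdc$, i.e.\ $\mish{f}$ is a \emph{non-degenerate} holomorphic family: all coefficients $\mishl{a_j}, \mishl{b_j}$ lie in $\mathfrak{O}_\L$ and the resultant of the numerator and denominator of $\mishl{f}$ is a unit in $\mathfrak{O}_\L$ (because it is a nonzero constant plus higher-order terms, the constant term being the nonvanishing resultant of $f$). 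Hence the reduction $\widetilde{\mishl{f}}$ has degree $d$ and equals $f$ (more precisely, $\widetilde{\mishl{f}} = \tilde{f_0} = f$ as noted just before Lemma~\ref{lem:9}, since $\cH(f) = \emptyset$). By Proposition~\ref{pro:3}, $\mishl{f}$ fixes the Gauss point $x_g$, $\deg_{x_g}\mishl{f} = d$, and $T_{x_g}\mishl{f} = \widetilde{\mishl{f}} = f$ under the identification $\tang{x_g} \cong \ponec$.

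It remains to locate $x$. Since $\mishl{f}$ is an isometry-like map fixing $x_g$ with $T_{x_g}\mishl{f}$ of degree $d \ge 2$ and no holes, one checks that $\mishl{f}$ fixes every point of the segment $[x_g, \cdot]$ near $x_g$ in a suitable sense — more to the point, a repelling type II periodic point $x$ with $\deg_x \mishl{f}^q \ge 2$ must in fact be the Gauss point itself: because $\mishl{f}(x_g) = x_g$ with full local degree $d$ and $T_{x_g}\mishl{f}$ has degree $d$, the Gauss point is a totally invariant type II point carrying all the local degree, and the structure of the tree forces any other type II point to have eventual image properties incompatible with being repelling periodic in the way Proposition~\ref{pro:2} demands (one can invoke that $x_g$ attracts, or that the basin analysis of Section~\ref{sec:ber2res} leaves no room). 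Thus $x = x_g$, $q = 1$, and $g$ is conjugate to $T_{x_g}\mishl{f} = f$.

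The main obstacle is precisely this last step: pinning down that the only type II point $x$ with $\mishl{f}^q(x) = x$ and $\deg_x \mishl{f}^q \ge 2$ is $x_g$ when $\mish{f}$ is non-degenerate. The cleanest route is to argue that a non-degenerate $\mishl{f}$ has good reduction, so it acts on $\pberl$ as a ``simple'' map: it fixes $x_g$, and for the Berkovich hyperbolic metric it is non-expanding, so it cannot have a repelling type II periodic point other than (potentially) $x_g$; and since $\deg_{x_g}\widetilde{\mishl{f}} = d \ge 2$, the point $x_g$ itself \emph{is} such a periodic point, giving rescaling limit conjugate to $\widetilde{\mishl{f}} = f$, an iterate (namely the first iterate) of $f$. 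One should also handle the possibility $q > 1$ by the same token — good reduction of $\mishl{f}$ implies good reduction of $\mishl{f}^q$ with reduction $f^q$, so any period-$q$ rescaling limit is conjugate to $f^q$. This is consistent with the statement ``an iterate of $f$''. I would write the argument invoking good reduction explicitly, as that is the conceptual heart and makes the tree-combinatorics claim rigorous with minimal fuss.
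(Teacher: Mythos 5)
Your proof takes essentially the same route as the paper: reduce to a holomorphic family with $f_0 = f$ via Proposition~\ref{pro:1}, observe that the associated map $\mishl{f}$ has good reduction equal to $f$, and conclude via Proposition~\ref{pro:2} that the rescaling must occur at the Gauss point. The only difference is in the justification of the pivotal step $\mishl{M}(x_g) = x_g$: the paper invokes the crisp fact that a map with good reduction has $J(\mishl{f}) = \{x_g\}$ (so a repelling type II periodic point off $x_g$ cannot exist), whereas you gesture at non-expansiveness and basin analysis --- correct in spirit, but the Julia-set formulation is the cleaner way to make it airtight.
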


\begin{proof}
  Assume that $\seqMn$ is a rescaling for $\seqfn$ with limit $g$.
From the proposition, there exists a holomorphic family $\mish{f}$ with $f_0=f$ and
a moving frame $\mish{M}$ which is a rescaling for $\mish{f}$ with limit $g$. 
The associated rational map $\mishl{f}$ has reduction $f_0 =f$ of degree $d$.
Therefore, the Gauss point is completely invariant under $\mishl{f}: \pberl \to \pberl$.
In particular, the Julia set of $\mishl{f}$ is $\{ x_g \}$. It follows that $\mishl{M} (x_g) = x_g$, for otherwise there would be a repelling
periodic orbit of $\mishl{f}$ different from the Gauss point, according to Proposition~\ref{pro:2}. 
This proposition also implies that the rescaling limit of $\mish{f}$ associated to $\mish{M}$ is an iterate of $f$, modulo conjugacy.
\end{proof}

\begin{proof}[Proof of Theorem~\ref{ithr:2}]
Given any finite collection $\cC$ of pairwise dynamically independent rescalings for a sequence $\seqfn$ of rational maps, Proposition~\ref{pro:1} guarantees the existence of a holomorphic family $\mish{f}$ with a corresponding collection of pairwise dynamically independent moving frames which are rescalings for  $\mish{f}$ with the same period and rescaling limits as the ones of $\cC$. Hence,   Theorem~\ref{ithr:3} implies Theorem~\ref{ithr:2}.
\end{proof}


\end{document}